\definecolor{my_color}{rgb}{0,0.5,0.5}
\definecolor{MIXT}{rgb}{0.4,0.3,0.6}
\definecolor{mixt}{rgb}{0.5,0.3,0.2}
\definecolor{sin}{rgb}{0,0.5,0.5}
\definecolor{darkblue}{rgb}{0,0.1,0.8}
\definecolor{redi}{rgb}{0.5,0,0.4}
\numberwithin{equation}{section}
\font\tencyr=wncyr10 
\def\rus{\tencyr\cyracc}
\newtheorem{thm}{Theorem}[section]
\newtheorem{lm}[thm]{Lemma}
\newtheorem{cl}[thm]{Corollary}
\newtheorem{prop}[thm]{Proposition}
\theoremstyle{remark}
\newtheorem{rmk}[thm]{Remark}
\theoremstyle{definition}
\newtheorem{ex}[thm]{Example}
\newtheorem{df}{Definition}
\newtheorem*{rema}{Remark}
\newcommand{\eus}{\EuScript}
\newcommand {\be}{{\mathfrak b}}
\newcommand {\g}{{\mathfrak g}}
\newcommand {\te}{{\mathfrak t}}
\newcommand {\ut}{{\mathfrak u}}
\newcommand {\gN}{{\eus N}}
\newcommand {\gH}{{\eus H}}
\newcommand {\esi}{\varepsilon}
\newcommand {\ap}{\alpha}
\newcommand {\lb}{\lambda}
\newcommand {\HW}{\widehat W}
\newcommand {\HV}{\widehat V}
\newcommand {\HP}{\widehat\Pi}
\newcommand {\HD}{\widehat\Delta}
\newcommand {\ct}{{\mathcal T}}
\newcommand {\BR}{{\mathbb R}}
\newcommand {\BN}{{\mathbb N}}
\newcommand {\kl}{{\mathsf {cl}}}
\newcommand {\hot}{{\mathsf{ht}}}
\newcommand {\rt}{{\mathsf{rt}}}
\newcommand {\supp}{{\mathsf{supp}}}
\newcommand {\GR}[2]{{\textrm{{\sf\bfseries #1}}}_{#2}}
\newcommand {\Ab}{\mathfrak{Ab}}
\newcommand {\Abo}{\mathfrak{Ab}^o}
\newcommand {\AD}{\mathfrak{Ad}}
\newcommand {\thi}{{\lfloor\theta/2\rfloor}}
\newcommand {\tthe}{{\lceil \theta/2\rceil}}
\newcommand {\bap}{\boldsymbol{\hat\ap}}
\newcommand {\un}{\underline}
\newcommand {\beq}{\begin{equation}}
\newcommand {\eeq}{\end{equation}}
\newcommand{\curge}{\succcurlyeq}
\newcommand{\curle}{\preccurlyeq}
\renewcommand{\le}{\leqslant}
\renewcommand{\ge}{\geqslant}
\newenvironment{E6}[6]{
{\footnotesize\begin{tabular}{@{}c@{}}
{#1}{#2}\lower2.3ex\vbox{\hbox{{#3}\rule{0ex}{0.1ex}}
\hbox{\hspace{0.3ex}\rule{0ex}{.1ex}\rule{0ex}{.1ex}}\hbox{{#6}\strut}}{#4}{#5}
\end{tabular}}}
\begin{document}
\setlength{\parskip}{2pt plus 4pt minus 0pt}
\hfill {\scriptsize September 26, 2018} 
\vskip1.5ex

\title[Glorious pairs and Abelian ideals]%
{Glorious pairs of roots and Abelian ideals of a Borel subalgebra}
\author{Dmitri I. Panyushev}
\address[]{Institute for Information Transmission Problems of the R.A.S., Bolshoi Karetnyi per. 19,  
127051 Moscow,  Russia}
\email{panyushev@iitp.ru}
\keywords{Root system, Borel subalgebra, abelian ideal, adjacent simple roots}
\subjclass[2010]{17B20, 17B22, 06A07, 20F55}
\thanks{This research is partially supported by the R.F.B.R. grant {\rus N0} 16-01-00818.}

\begin{abstract}
Let $\g$ be a simple Lie algebra with a Borel subalgebra $\be$. Let $\Delta^+$ be the corresponding 
(po)set of positive roots and $\theta$ the highest root. A pair $\{\eta,\eta'\}\subset \Delta^+$ is said to be 
glorious, if $\eta,\eta'$ are incomparable and $\eta+\eta'=\theta$. Using the theory of abelian ideals of 
$\be$, we {\bf (1)} establish a relationship of $\eta,\eta'$ to certain abelian ideals associated with long simple roots, {\bf (2)} provide a natural bijection between the glorious pairs and the pairs of adjacent long 
simple roots (i.e., some edges of the Dynkin diagram), and {\bf (3)} point out a simple transform 
connecting two glorious pairs corresponding to the incident edges in the Dynkin digram.  
In types $\GR{DE}{}$, we prove that if $\{\eta,\eta'\}$ 
corresponds to the edge through the branching node of the Dynkin diagram, then the meet 
$\eta\wedge\eta'$ is the unique maximal non-commutative root. There is also an analogue of this property
for all other types except type $\GR{A}{}$. As an application, we describe the minimal non-abelian ideals
of $\be$.
\end{abstract}

\maketitle

\section*{Introduction}

\noindent
Let $\g$ be a complex simple Lie algebra, with a Cartan subalgebra $\te$ and a triangular decomposition 
$\g=\ut\oplus\te\oplus \ut^-$. Here $\be=\ut\oplus\te$ is a fixed Borel subalgebra.  
In this note, we present new combinatorial properties of root systems and abelian ideals of $\be$.
Our setting is always combinatorial, i.e., the abelian ideals of $\be$, which are sums of 
root spaces of $\ut$, are identified with the corresponding sets of positive roots.  

Let $\Delta$ be the root system of $(\g,\te)$ in the $\BR$-vector space $V=\te_\BR^*$, 
$\Delta^+$ the set of positive roots in $\Delta$ corresponding to $\ut$,  
$\Pi$ the set of simple roots in $\Delta^+$, and $\theta$  the {highest root} in  $\Delta^+$. 
We regard $\Delta^+$ as poset with the usual partial ordering `$\curge$'. 
An {\it ideal\/} of $(\Delta^+,\curge)$ is a subset $I\subset \Delta^+$ such that if
$\gamma\in I, \nu\in\Delta^+$, and $\nu+\gamma\in\Delta^+$, then $\nu+\gamma\in I$. The set of 
minimal elements of $I$ is denoted by $\min(I)$.  Write also $\max(\Delta^+\setminus I)$ for the set of 
maximal elements of $\Delta^+\setminus I$. An ideal $I$ is {\it abelian}, if
$\gamma'+\gamma''\not\in \Delta^+$ for all $\gamma',\gamma''\in I$.

Let $\AD$ (resp. $\Ab$) be the set of {\it all\/} (resp. {\it all abelian}) ideals of $\Delta^+$. Then
$\Ab\subset\AD$ and we think of both sets as posets with respect to inclusion. The ideal {\it generated by\/} 
$\gamma\in\Delta^+$ is $I\langle{\curge}\gamma\rangle=\{\nu\in\Delta^+\mid \nu\curge \gamma\}\in\AD$, 
and  $\gamma$ is said to be {\it commutative}, if $I\langle{\curge}\gamma\rangle\in\Ab$. Write 
$\Delta^+_{\sf com}$ for the set of all commutative roots. For any $\Delta$,
this subset is explicitly described in~\cite[Theorem\,4.4]{jac06}. Note that $\Delta^+_{\sf com}\in \AD$.

A pair $\{\eta,\eta'\}\subset \Delta^+$ is said to be {\it glorious}, if $\eta,\eta'$ are incomparable in the 
poset $(\Delta^+,\curge)$, i.e., $\{\eta,\eta'\}$ is an antichain, and $\eta+\eta'=\theta$. Our goal is to  explore properties of glorious pairs in 
connection with the theory of abelian ideals. We prove that 
\\ \indent
(1) \ these $\eta, \eta'$ are necessarily long and commutative;  
\\ \indent
(2) \ there is a natural bijection between the glorious pairs and the
pairs of adjacent long simple roots, i.e., certain edges of the Dynkin diagram, 
see Theorem~\ref{thm:main};
\\ \indent
(3) \ there is a simple transformation connecting two glorious pairs corresponding to a
triple of consecutive long simple roots, i.e., two incident edges, see Theorem~\ref{thm:sosed-pary}. 
\\ \indent
(4) \ if $I\in\AD$ and $\#I {<} h^*$, where $h^*$ is the {\it dual Coxeter number} of $\Delta$, then $I\in\Ab$, see Lemma~\ref{lm:h*}. 

Let $\Pi_l$ be the set of {\bf long} simple roots. (In the $\GR{ADE}{}$-case, all roots are assumed to be 
long and hence $\Pi_l=\Pi$.) Since $\Pi_l$ is connected in the Dynkin diagram, we have 
$\#\{\text{glorious pairs}\}=\#\Pi_l-1$, and the above bijection (2) can be understood as a bijection
with the edges of the sub-diagram $\Pi_l$. 
In the non-simply laced case, there is a unique pair of adjacent simple roots 
of different length. This gives rise to a similar pair of roots (the {\it semi-glorious pair}), which is discussed in Section~\ref{subs:semi-glor}. 
To state more properties of glorious pairs, we need more notation. 
\\ \indent
Let $\Abo$ be the set of nonempty abelian ideals and $\Delta^+_l$  the set of {\bf long} positive roots.  
In~\cite[Sect.\,2]{imrn}, we constructed a map $\tau: \Abo \to \Delta^+_l$, which is onto (see
Section~\ref{sect:1}). 
If  $\tau(I)=\mu$, then $\mu\in\Delta^+_l$ is called the {\it rootlet\/} of $I$, also denoted $\rt(I)$.  
Letting $\Ab_\mu=\tau^{-1}(\mu)$, we get a partition of $\Abo$ parameterised by
$\Delta^+_l$. Each $\Ab_\mu$ is  a subposet of $\Ab$ and, moreover, 
$\Ab_\mu$ has a unique minimal and unique maximal element (ideal)~\cite[Sect.\,3]{imrn}.  
These extreme elements of $\Ab_\mu$  
are denoted by $I(\mu)_{\sf min}$ and $I(\mu)_{\sf max}$. 
\\  \indent 
\textbullet \quad We show that the glorious pairs are related to the abelian ideals $I(\ap)_{\sf min} $ with 
$\ap\in\Pi_l$.  If a glorious pair $\{\eta,\eta'\}$ gives rise to adjacent $\ap,\ap'\in \Pi_l$, then 
$\eta\in\min(I(\ap)_{\sf min})$, $\eta'\in\min(I(\ap')_{\sf min})$ and also $\kl(\eta)=\ap'$ and $\kl(\eta')=\ap$, see Prop.~\ref{pr:sosed-prostye}. 
Here $\kl({\cdot})$ is a natural map from $\Delta^+_{\sf com}$ to $\HP$, the set of affine simple roots, 
see Section~\ref{sect:1} for details. 
\\  \indent 
\textbullet \quad Conversely, given adjacent $\ap,\ap'\in \Pi_l$, the corresponding glorious pair
is the unique pair in $I(\ap)_{\sf min}\cup I(\ap')_{\sf min}$ that sums to $\theta$.
The constructions of $\{\eta,\eta'\}$ from $\{\ap,\ap'\}$ and vice versa heavily relies on properties of $I(\ap)_{\sf min} $, $I(\ap)_{\sf max}$, and the minuscule elements of the affine Weyl group $\HW$, as it was developed in \cite{imrn,jems}.
\\  \indent 
\textbullet \quad 
It is noticed in \cite[Sect.\,4]{jac06} and conceptually proved in \cite[Sect.\,3]{airs2} that if $\Delta$ is 
not of type $\GR{A}{n}$, then $\Delta^+_{\sf nc}:=\Delta^+\setminus \Delta^+_{\sf com}$ has a unique 
maximal element, denoted $\thi$. Then $\tthe:=\theta-\thi\in\Delta^+$ and $\thi\curle \tthe$.
In Section~\ref{sect:interval}, we prove that if $\Delta\in \{\GR{D-E}{}\}$, then the interval 
$[\thi,\tthe]$ in $\Delta^+$, which is a cube $\mathbb B^3$~\cite[Sect.\,5]{airs2}, is related to the glorious 
pairs associated with the edges through the branching node, $\bap$, of the Dynkin diagram. This is   deduced from some properties of the shortest element of $W$ that takes $\theta$ to $\bap$, see Theorem~\ref{thm:4.3}.
If $\Delta\in \{\GR{B-C-F-G}{}\}$, then
$\tthe=\thi\in \Pi$ and $\{\thi, \tthe\}$ is just the unique semi-glorious pair associated with the adjacent 
simple roots of different length.
\\  \indent 
\textbullet \quad 
In Section~\ref{sect:applic}, we prove that all minimal non-abelian ideals of $\be$ are related to either 
the glorious or semi-glorious pairs.
All these ideals have  the same cardinality, $h^*$. 

We refer to \cite{bour}, \cite[\S\,3.1]{t41} for standard results on root systems and Weyl groups. The 
numbering of simple roots for the exceptional Lie algebras follows~\cite[Tables]{t41}.

\section{Preliminaries}    \label{sect:1}

\noindent
Our main objects are $\Pi=\{\ap_1,\dots,\ap_n\}$, the vector space $V=\oplus_{i=1}^n{\mathbb R}\ap_i$, 
the  Weyl group $W$ generated by  simple reflections $s_\ap$ ($\ap\in\Pi$), and a $W$-invariant inner 
product $(\ ,\ )$ on $V$. Set $\rho=\frac{1}{2}\sum_{\nu\in\Delta^+}\nu$. The partial ordering `$\curle$' in $\Delta^+$ is defined by the rule  that $\mu\curle\nu$ if 
$\nu-\mu$ is a non-negative integral linear combination of simple roots. If 
$\mu=\sum_{i=1}^n c_i\ap_i\in\Delta^+$, then $\hot(\mu):=\sum_{i=1}^n c_i$ and $\{\mu:\ap_i\}:=c_i$. 
\subsection{}   \label{subs:1-1}
The theory of abelian ideals of $\be$ is built on their relationship, due to D.~Peterson, with 
the {\it minuscule elements\/} of the affine Weyl group $\HW$, see Kostant's account in~\cite{ko98}; 
another approach is presented in \cite{cp1}. Recall the necessary setup.
Letting $\widehat V=V\oplus {\mathbb R}\delta\oplus {\mathbb R}\lb$, we 
extend the inner product $(\ ,\ )$ on $\widehat V$ so that 
$(\delta,V)=(\lb,V)=(\delta,\delta)= (\lb,\lb)=0$ and $(\delta,\lb)=1$. Set  $\ap_0=\delta-\theta$, where
$\theta$ is the highest root in $\Delta^+$. Then

\begin{itemize}
\item[] \ 
$\widehat\Delta=\{\Delta+k\delta \mid k\in {\mathbb Z}\}$ is the set of affine
(real) roots; 
\item[] \ $\HD^+= \Delta^+ \cup \{ \Delta +k\delta \mid k\ge 1\}$ is
the set of positive affine roots; 
\item[] \ $\HP=\Pi\cup\{\ap_0\}$ is the corresponding set
of affine simple roots;
\item[] \  $\mu^\vee=2\mu/(\mu,\mu)$ is the coroot corresponding to 
$\mu\in \widehat\Delta$;
\end{itemize}
\noindent
For each $\ap\in \HP$, let $s_\ap$ denote the corresponding reflection in $GL(\HV)$.That is, 
$s_\ap(x)=x- (x,\ap)\ap^\vee$ for any $x\in \HV$. The affine Weyl group, $\HW$, is the subgroup of 
$GL(\HV)$ generated by the reflections $s_\ap$ ($\ap\in\HP$). 
The extended inner product $(\ ,\ )$ on $\widehat V$ is $\widehat W$-invariant. 
The {\it inversion set\/} of $w\in\HW$ is $\eus N(w)=\{\nu\in\HD^+\mid w(\nu)\in -\HD^+\}$.

\subsection{}    \label{subs:1-2}
Following D.\,Peterson, we say that $\hat w\in \HW$ is  {\it minuscule\/}, if 
$\eus N(\hat w)=\{\delta-\gamma \mid \gamma\in I_{\hat w}\}$ for some  $I_{\hat w}\subset \Delta$, see~\cite[2.2]{ko98}.
One then proves that {\sf (i)} $I_{\hat w}\subset \Delta^+$ and it is an abelian ideal,
{\sf (ii)} the assignment $\hat w\mapsto I_{\hat w}$ yields a bijection between the minuscule elements of
$\HW$ and the abelian ideals, see \cite{ko98},  \cite[Prop.\,2.8]{cp1}. 
Accordingly, if $I\in\Ab$, then $\hat w_I$ denotes the corresponding minuscule
element of $\HW$. Obviously, $\# I=\#\eus N(\hat w_I)=\ell(\hat w_I)$, where $\ell$ is the usual length function on $\HW$. 

\begin{prop}[cf.~{\cite[Thm.\,2.2 \& Thm.\,2.4]{imrn}}]    \label{prop:generators}
Let $J\in\Ab$.
\\ \indent
1) For $\gamma\in J$, we have $\gamma\in \min(J)$ if and only if 
$\hat w_J(\delta-\gamma)\in -\HP$;  
\\ \indent 2) if $\eta\not\in J$, then $J\cup \{\eta\} \in \Ab$ if and only if 
$\hat w_J(\delta-\gamma)=:\beta\in \HP$. If this is the case, then 
$\hat w_{J\cup\{\gamma\}}=s_\beta \hat w_J$.
\end{prop}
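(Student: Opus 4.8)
The plan is to prove Proposition~\ref{prop:generators} by exploiting the dictionary between abelian ideals and minuscule elements recorded in \S\ref{subs:1-2}, together with the definition of the inversion set. The key fact I would use throughout is that for a minuscule $\hat w_J$ one has $\eus N(\hat w_J)=\{\delta-\gamma\mid \gamma\in J\}$, so membership and minimality questions about $J$ translate directly into statements about how $\hat w_J$ acts on the affine roots $\delta-\gamma$.

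For part~(1), I would argue as follows. Since $\gamma\in J$, the element $\delta-\gamma$ lies in $\eus N(\hat w_J)$, hence $\hat w_J(\delta-\gamma)\in -\HD^+$. The task is to characterize when this negative root is in fact the negative of an affine \emph{simple} root. The standard reduction theory of Coxeter groups says that for $w\in\HW$ and $\beta\in\eus N(w)$, the root $w(\beta)$ equals $-\alpha$ with $\alpha\in\HP$ precisely when $\beta$ is a \emph{final} inversion, i.e.\ $\beta=w^{-1}(-\alpha)$ for some simple $\alpha$ with $\ell(s_\alpha w^{-1})<\ell(w^{-1})$; equivalently $\delta-\gamma$ is a "covering" inversion. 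I would then match this covering condition on the Coxeter side with the minimality condition on the poset side. Concretely: $\gamma\in\min(J)$ means there is no $\nu\in\Delta^+$ with $\gamma-\nu\in J$, i.e.\ removing $\gamma$ from $J$ along a simple-root direction keeps an ideal; under Peterson's bijection this removal corresponds exactly to multiplying $\hat w_J$ by a simple reflection on the appropriate side and \emph{lowering} the length by one, which is the combinatorial content of $\hat w_J(\delta-\gamma)\in-\HP$. I expect the main technical care here to be bookkeeping about left versus right multiplication and about the sign/direction conventions for $\delta-\gamma$, since $\gamma\in\min(J)$ is naturally a statement about the order ideal $J$ while $\hat w_J(\delta-\gamma)\in-\HP$ is a statement about reduced words.

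For part~(2), the structure is parallel but in the "addition" direction. Adding $\eta\notin J$ to $J$ and keeping it an abelian ideal means $J\cup\{\eta\}\in\Ab$, which by the bijection corresponds to a minuscule element $\hat w_{J\cup\{\eta\}}$ whose inversion set is $\eus N(\hat w_J)\cup\{\delta-\eta\}$, larger by exactly one element. The natural candidate for this new minuscule element is $s_\beta\hat w_J$ with $\beta:=\hat w_J(\delta-\eta)$. I would verify that $s_\beta\hat w_J$ is minuscule with the correct inversion set precisely when $\beta\in\HP$: if $\beta$ is affine simple, then left multiplication by $s_\beta$ adjoins the single root $\hat w_J^{-1}(\beta)=\delta-\eta$ to the inversion set while fixing the rest, giving a minuscule element realizing $J\cup\{\eta\}$; conversely if $J\cup\{\eta\}$ is abelian then its associated minuscule element must differ from $\hat w_J$ by a single simple reflection, forcing $\beta\in\HP$. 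The identity $\hat w_{J\cup\{\eta\}}=s_\beta\hat w_J$ then falls out.

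The main obstacle, as I see it, is establishing cleanly that "adjoining one root to an abelian ideal" corresponds on the nose to "left multiplication by one simple reflection raising the length by one," and dually for removal and minimality. This is exactly the content of the cited \cite[Thm.\,2.2 \& Thm.\,2.4]{imrn}, so in practice I would lean on those results and spend most of the effort translating the ideal-theoretic hypotheses ($\gamma\in\min(J)$, $J\cup\{\eta\}\in\Ab$) into the inversion-set language where the Coxeter-group length combinatorics apply verbatim. The genuinely delicate point to check is that no \emph{other} root gets inverted or un-inverted when one multiplies by $s_\beta$ — i.e.\ that the change in $\eus N$ is exactly the single expected element — which relies on $\delta-\eta$ being the unique new inversion and is guaranteed by the minuscule (abelian) condition restricting inversions to the special form $\{\delta-\gamma\}$.
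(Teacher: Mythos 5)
The paper does not actually prove Proposition~\ref{prop:generators}: it is recalled from \cite[Thm.\,2.2 \& Thm.\,2.4]{imrn}, and your sketch is essentially the standard argument behind those theorems — translate everything into the inversion set $\eus N(\hat w_J)=\{\delta-\gamma\mid\gamma\in J\}$, observe that $\gamma\in\min(J)$ (resp.\ $J\cup\{\eta\}\in\Ab$) is equivalent to $J\setminus\{\gamma\}$ being an (automatically abelian) ideal (resp.\ to $\{\delta-\gamma\mid\gamma\in J\cup\{\eta\}\}$ being a minuscule inversion set), and use the elementary fact that for $\beta\in\HP$ left multiplication by $s_\beta$ removes or adjoins the single affine root $\hat w_J^{-1}(\mp\beta)$ from/to $\eus N(\hat w_J)$, together with the weak-order fact that nested inversion sets differing by one element force $\hat w_{J\cup\{\eta\}}=s_\beta\hat w_J$. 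This is sound. The one slip is in part~(1): the descent condition you want is $\ell(s_\alpha \hat w_J)<\ell(\hat w_J)$, equivalently $\hat w_J^{-1}(\alpha)\in-\HD^+$ (a \emph{left} descent of $\hat w_J$), not $\ell(s_\alpha\hat w_J^{-1})<\ell(\hat w_J^{-1})$ — precisely the left/right bookkeeping issue you flagged yourself, and harmless once fixed. (You also silently and correctly repair the typos in the statement, where $\delta-\gamma$ and $J\cup\{\gamma\}$ in part~(2) should read $\delta-\eta$ and $J\cup\{\eta\}$.)
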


\subsection{}   \label{subs:1-3}
Using the above assertion, one defines a natural map $\kl: \Delta^+_{\sf com}\to \HP$ as follows. Given 
$\gamma\in\Delta^+_{\sf com}$, take any any $I\in\Ab$ such that $\gamma\in\min(I)$. (For instance,
$I=I\langle{\curge}\gamma\rangle$ will do.) Then 
$\hat w_I(-\delta+\gamma)=\beta \in \HP$ and we set $\kl(\gamma)=\beta$. The point is that this $\beta$ 
does not depend on the choice of $I$, see \cite[Theorem\,4.5]{jac06}. We say that $\kl(\gamma)$ is the
{\it class} of $\gamma$. It can be shown that $\kl(\gamma)\in\Pi$, whenever 
$\gamma\in\gH\setminus\{\theta\}$.
\subsection{}   \label{subs:1-4}
For any nontrivial minuscule $\hat w_I$, we have 
$\hat w_I(\ap_0)+\delta=\hat w_I(2\delta-\theta)\in \Delta^+_l$~\cite[Prop.\,2.5]{imrn}. This yields the 
mapping $\tau: \Abo\to \Delta^+_l$, which is onto~\cite[Theorem\,2.6(1)]{imrn}. That is, 
$\rt(I)=\hat w_I(2\delta-\theta)$. The {\it Heisenberg ideal}
\[
\gH:=\{\gamma\in \Delta^+ \mid (\gamma, \theta)\ne 0\}=
\{\gamma\in \Delta^+ \mid (\gamma, \theta)> 0\}\in\AD
\] 
plays a prominent role in the theory of 
abelian ideals and posets $\Ab_\mu=\tau^{-1}(\mu)$:

\textbullet\quad   $I=I(\mu)_{\sf min}$ for some $\mu\in\Delta^+_l$  if and only if 
$I\subset  \gH$~\cite[Theorem\,4.3]{imrn};

\textbullet\quad  $\# I(\mu)_{\sf min}=(\rho,\theta^\vee-\mu^\vee)+1$~\cite[Theorem\,4.2(4)]{imrn};

\textbullet\quad  For $I\in\Abo$, we have $I\in\Ab_\mu$ if and only if $I\cap\gH=I(\mu)_{\sf min}$~\cite[Prop.\,3.2]{jems};

If $I=I(\mu)_{\sf min}$, then $\hat w_I$ has the following simple description. First, there is a unique 
element of minimal length in $W$ that takes $\theta$ to $\mu$~\cite[Theorem\,4.1]{imrn}. Writing $w_\mu$ 
for this element, one has $\ell(w_\mu)=(\rho,\theta^\vee-\mu^\vee)$ and 
$\hat w_I=w_\mu s_{\ap_0}$~\cite[Theorem\,4.2]{imrn}.
\subsection{}   \label{subs:1-5}
Generalising Peterson's idea, Cellini--Papi associate a canonical  element of $\HW$ to 
{\bf any} $I\in\AD$, see \cite[Sect.\,2]{cp1}. Set $I^k:=I^{k-1}+I$ for $k\ge 2$, where 
$J+I=\{\nu+\eta\in\Delta^+\mid \nu\in J, \eta\in I\}$. If $m\in\BN$ is the smallest integer such that 
$I^m=\varnothing$, then $\hat w_I$ is the unique element of $\HW$ such that 
$\gN(\hat w_I)=\bigsqcup_{k=1}^{m-1} (k\delta-I^k)$. For the abelian ideals,
$I^2=\varnothing$ and one recovers the definition of minuscule elements.

Some basic results on abelian ideals of $\be$ that explore other directions of investigations can be 
found in \cite{cp3,cmp,ko04,jlt16,suter}.

\section{Some auxiliary results}   \label{sect:2}

\noindent
We begin with some properties of arbitrary ideals in $(\Delta^+,\curle)$. Recall that
$\#\gH=2h^*-3$, where $h^*:=(\rho, \theta^\vee)+1$ is the {\it dual Coxeter number\/} of $\Delta$.

\begin{lm}   \label{lm:sum-theta}
If $I\subset \Delta^+$ is a non-abelian ideal, then there are $\gamma_1,\gamma_2\in I$ such that
$\gamma_1+\gamma_2=\theta$. 
\end{lm}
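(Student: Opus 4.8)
The plan is to single out, among all pairs of elements of $I$ whose sum lies in $\Delta^+$, one whose sum is as high as possible, and to prove that this maximal sum is forced to be $\theta$ itself. First I would record the content of non-abelianity: there are $\gamma_1,\gamma_2\in I$ with $\sigma:=\gamma_1+\gamma_2\in\Delta^+$, and since $I$ is an ideal, $\gamma_1\in I$, $\alpha$-free $\gamma_2\in\Delta^+$, and $\gamma_1+\gamma_2\in\Delta^+$, also $\sigma\in I$. Thus the set $\mathcal{S}=\{(\beta_1,\beta_2)\in I\times I\mid \beta_1+\beta_2\in\Delta^+\}$ is non-empty and finite, and I choose $(\gamma_1,\gamma_2)\in\mathcal{S}$ maximizing $\hot(\gamma_1+\gamma_2)$, writing $\sigma=\gamma_1+\gamma_2$. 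The assertion is exactly that $\sigma=\theta$. (As a by-product, climbing a saturated chain of simple-root additions from $\sigma$ up to $\theta$, each step remaining in $\Delta^+$ and hence, by the ideal property, in $I$, shows that $\theta\in I$.)

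Next, suppose for contradiction that $\sigma\neq\theta$. Since $\theta$ is the unique maximal element of $(\Delta^+,\curge)$, there is a simple root $\alpha$ with $\sigma+\alpha\in\Delta^+$. The engine of the proof is the following root-string fact: if $\gamma_1,\gamma_2\in\Delta^+$ with $\gamma_1+\gamma_2\in\Delta^+$ and $\alpha$ is simple with $\gamma_1+\gamma_2+\alpha\in\Delta^+$, then $\gamma_1+\alpha\in\Delta^+$ or $\gamma_2+\alpha\in\Delta^+$. Granting this, say $\gamma_1+\alpha\in\Delta^+$; then $\gamma_1+\alpha\in I$ by the ideal property (applied with $\nu=\alpha\in\Delta^+$), so $(\gamma_1+\alpha,\gamma_2)\in\mathcal{S}$ with $\hot((\gamma_1+\alpha)+\gamma_2)=\hot(\sigma)+1>\hot(\sigma)$, contradicting the choice of $(\gamma_1,\gamma_2)$. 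Hence $\sigma=\theta$, which is what we want.

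The remaining, and I expect genuinely delicate, step is to prove the engine fact. I would argue by contraposition: if neither $\gamma_1+\alpha$ nor $\gamma_2+\alpha$ is a root, then each $\gamma_i$ is the top of its $\alpha$-string (or equals $\alpha$), whence $(\gamma_i,\alpha^\vee)\ge 0$ and therefore $(\sigma,\alpha^\vee)\ge 0$. If in addition $\sigma+\alpha\in\Delta$, then $(\sigma,\alpha^\vee)\ge 0$ forces $\sigma-\alpha\in\Delta$ as well, so the $\alpha$-string through $\sigma$ has length $\ge 3$. In the simply-laced types this is already impossible, since every $\alpha$-string has length $\le 2$; this disposes of $\GR{ADE}{}$ at once. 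The main obstacle is the multiply-laced case $\GR{BCFG}{}$, where length-$3$ strings (and, in $\GR{G}{2}$, length-$4$ strings) genuinely occur, so one must rule out that an interior point $\sigma$ of such a string decomposes as a sum of two tops. For this I would reflect by $s_\alpha$, replacing $(\gamma_1,\gamma_2,\sigma)$ by $(s_\alpha\gamma_1,s_\alpha\gamma_2,s_\alpha\sigma)$, which converts ``both tops'' into ``both bottoms'' while keeping $s_\alpha\sigma$ interior; combined with the bound $(\sigma,\alpha^\vee)\in\{0,1\}$ coming from $q_\sigma\ge 1$ and the string length $\le 4$, a short length computation in the rank-$\le 3$ subsystem spanned by $\gamma_1,\gamma_2,\alpha$ shows in each case that $\sigma$ cannot be a sum of two tops, yielding the contradiction and completing the proof of the engine fact and hence of the lemma.
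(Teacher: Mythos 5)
Your argument is essentially the paper's own: the paper proves the lemma by the same height\--raising induction, replacing a summable pair $(\gamma_1,\gamma_2)$ with sum $\mu\ne\theta$ by $(\gamma_1+\ap,\gamma_2)$ or $(\gamma_1,\gamma_2+\ap)$ for a simple $\ap$ with $\mu+\ap\in\Delta^+$; your choice of a pair maximizing $\hot(\gamma_1+\gamma_2)$ is the same induction read backwards. The one substantive difference is that the paper simply asserts the dichotomy ``$\gamma_1+\ap$ or $\gamma_2+\ap$ is a root'' without proof, whereas you try to justify it: your justification is complete in the simply laced case, but for $\GR{B-C-F-G}{}$ what you offer is a plan (``a short length computation \dots shows in each case'') rather than a proof, and as written this is the only gap. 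The fact is true, and the multiply laced case can be closed cleanly without reflections: in the remaining situation one has $(\gamma_1,\ap)\ge 0$ and $(\gamma_2,\ap)\ge 0$, hence $(\sigma,\ap^\vee)\ge 0$ and, since $\sigma+\ap\in\Delta^+$, the $\ap$-string through $\sigma$ has length at least $3$; outside $\GR{G}{2}$ (which one checks directly on its six positive roots) this forces $(\sigma,\ap)=0$ with $\sigma,\ap$ short and $\tau:=\sigma+\ap$ long, whence
$(\gamma_1,\tau^\vee)+(\gamma_2,\tau^\vee)=(\sigma,\tau^\vee)=2\|\sigma\|^2/\|\tau\|^2=1$, so some $(\gamma_i,\tau^\vee)\ge 1$ and therefore $\tau-\gamma_i=\gamma_j+\ap$ is a (positive) root, a contradiction.
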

\begin{proof}
If $\gamma_1+\gamma_2=\mu$ for some $\gamma_1,\gamma_2\in I$ and $\mu\ne \theta$, then there is
$\ap\in\Pi$ such that $\mu+\ap$ is a root. In this case, $\gamma_1+\ap$ or $\gamma_2+\ap$ is a root, 
necessarily in $I$, which provides an induction step.
\end{proof}

\begin{lm}     \label{lm:h*}
If $\hat I\in \AD$ is non-abelian, then $\# \hat I\ge h^*$. Conversely, if $I\subset \gH$ is an ideal and 
$\# I\ge h^*$, then $I$ is not abelian.
\end{lm}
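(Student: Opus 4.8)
The plan is to exploit the \textbf{partner involution} on the Heisenberg ideal. First I would record the basic structure of $\gH$. Since $\theta$ is long, $(\gamma,\theta^\vee)\in\{0,1,2\}$ for every $\gamma\in\Delta^+$, and $(\gamma,\theta^\vee)=2$ forces $\gamma=\theta$. Hence $\gH\setminus\{\theta\}=\{\gamma\in\Delta^+\mid(\gamma,\theta^\vee)=1\}$, and for such $\gamma$ the element $\theta-\gamma$ is again a positive root with $(\theta-\gamma,\theta^\vee)=1$, so $\sigma\colon\gamma\mapsto\theta-\gamma$ is a fixed-point-free involution of $\gH\setminus\{\theta\}$. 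Using $\#\gH=2h^*-3$, this splits $\gH\setminus\{\theta\}$ into $h^*-2$ \emph{partner pairs} $\{\gamma,\theta-\gamma\}$, each summing to $\theta$; moreover $\sigma$ reverses the order, since $\gamma\curge\gamma'\Leftrightarrow\theta-\gamma'\curge\theta-\gamma$.

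The converse is immediate from this. Let $I\subseteq\gH$ be an abelian ideal; being a nonempty ideal it contains the highest root $\theta$. If both members $\gamma,\theta-\gamma$ of a partner pair lay in $I$, their sum $\theta$ would be a positive root, contradicting abelianness; so $I$ meets each of the $h^*-2$ partner pairs in at most one element. Counting $\theta$ separately gives $\#I\le(h^*-2)+1=h^*-1<h^*$. Therefore an ideal $I\subseteq\gH$ with $\#I\ge h^*$ cannot be abelian.

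For the first assertion, let $\hat I$ be a non-abelian ideal. By Lemma~\ref{lm:sum-theta} there are $\gamma_1,\gamma_2\in\hat I$ with $\gamma_1+\gamma_2=\theta$; pairing with $\theta^\vee$ gives $(\gamma_1,\theta^\vee)+(\gamma_2,\theta^\vee)=2$, and as $\theta$ is the only positive root with $(\cdot,\theta^\vee)=2$, both summands equal $1$. Thus $\gamma_1,\gamma_2\in\gH\setminus\{\theta\}$ form a partner pair and $\theta=\gamma_1+\gamma_2\in\hat I$. Since $\hat I$ is an ideal, $K:=\hat I\cap(\gH\setminus\{\theta\})$ is an up-set of $\gH\setminus\{\theta\}$ containing the full pair $\{\gamma_1,\gamma_2\}$. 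The crux is the following comparability claim: \emph{every $\gamma\in\gH\setminus\{\theta\}$ is $\curge$-comparable to $\gamma_1$ or to $\gamma_2$.} Granting this, each partner pair $\{\gamma,\theta-\gamma\}$ meets $K$: if $\gamma\curge\gamma_i$ then $\gamma\in K$, while if $\gamma\curle\gamma_i$ then $\theta-\gamma\curge\theta-\gamma_i=\gamma_{3-i}$, so $\theta-\gamma\in K$. Hence $K$ meets all $h^*-2$ pairs and contains both members of the witnessing pair, giving $\#K\ge(h^*-3)+2=h^*-1$ and $\#\hat I\ge\#K+1\ge h^*$.

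It remains to establish the comparability claim, which I expect to be the main obstacle. It is exactly what the cardinality bound needs: the claim says that $K$ together with the down-set $\sigma(K)$ covers $\gH\setminus\{\theta\}$, and since these sets overlap in at least $\{\gamma_1,\gamma_2\}$ while $\sigma$ is an order-reversing bijection, covering forces $\#K\ge\tfrac12\#(\gH\setminus\{\theta\})+1$. I would prove the claim from the explicit description of $\gH$: in the classical types the roots of $\gH\setminus\{\theta\}$ are written in $\esi$-coordinates, and one checks that $\gamma_1$ dominates, up to sign, every root sharing its distinguished coordinate while $\gamma_2$ controls the remaining ones, so the difference $\gamma-\gamma_i$ is always a sign-definite combination of simple roots; in the exceptional types $\gH\setminus\{\theta\}$ is a small explicit poset and the claim is a finite verification. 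A uniform argument seems plausible by viewing $\gH\setminus\{\theta\}$ as the weight poset of the level-one part of the $\theta$-grading and $\sigma$ as its canonical order-reversing symmetry, but the case-based route is the safe fallback.
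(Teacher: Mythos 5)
Your converse direction is complete and is essentially the paper's own argument: partition $\gH\setminus\{\theta\}$ into $h^*-2$ summable pairs and apply pigeonhole. The forward direction, however, has a genuine gap: everything is made to rest on the \emph{comparability claim} (every $\gamma\in\gH\setminus\{\theta\}$ is $\curge$-comparable to $\gamma_1$ or to $\gamma_2$), and you do not prove it — you only propose a type-by-type verification in $\esi$-coordinates plus a finite check in the exceptional cases. As it stands this is an unproven key lemma, so the proof is incomplete; and the case-based route you sketch is exactly the kind of argument the lemma is meant to avoid.

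The claim itself is true, and it is worth seeing that it is equivalent to the statement the paper proves uniformly. The paper passes to the canonical Cellini--Papi element $\hat w_{\hat I}\in\HW$ of the non-abelian ideal $\hat I\cap\gH$: since $(\hat I\cap\gH)^2=\{\theta\}$, one has $\gN(\hat w_{\hat I})=\{\delta-\gamma\mid\gamma\in\hat I\cap\gH\}\cup\{2\delta-\theta\}$, and because $\HD^+\setminus\gN(\hat w_{\hat I})$ is closed under addition, no pair $\{\nu,\theta-\nu\}$ can lie entirely in $\gH\setminus\hat I$ (otherwise $(\delta-\nu)+(\delta-\theta+\nu)=2\delta-\theta$ would lie outside $\gN(\hat w_{\hat I})$). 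Applied to the ideal generated by $\gamma_1,\gamma_2$, this says each partner pair meets $I\langle{\curge}\gamma_1,\gamma_2\rangle$, which via the order-reversing involution $\sigma$ is precisely your comparability claim; conversely your claim yields the same covering. So your skeleton and counting ($\#K\ge h^*-1$, hence $\#\hat I\ge h^*$) are correct, but to close the argument you should either import this inversion-set computation or actually carry out the case analysis you defer.
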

\begin{proof}
1) If $\hat I$ is non-abelian, then so is $\hat I\cap\gH$ (Lemma~\ref{lm:sum-theta}). Therefore, we may assume that 
$\hat I\subset \gH$ and  $\gamma_1+\gamma_2=\theta$ for some $\gamma_1,\gamma_2\in \hat I$. Let 
$\hat w_{\hat I} \in \HW$ be the canonical element associated with $\hat I$, see Section~\ref{subs:1-5}. Since $\hat I^2=\{\theta\}$ and 
$\hat I^3=\varnothing$, we have
\[
    \gN(\hat w_{\hat I})=\{\delta-\gamma\mid \gamma\in \hat I\}\cup \{2\delta-\theta\} .
\]
Since $\HD^+\setminus  \gN(\hat w_{\hat I})$ is a closed under addition subset of $\HD^+$, $\gH\setminus \hat I$ does not contain pairs of summable (to $\theta$) elements. Hence $\gH\setminus \hat I$ is contained in a Lagrangian half of $\gH\setminus\{\theta\}$ and is strictly less (because $\hat I$ contains a summable pair $\gamma_1,\gamma_2$). Thus,
\[
   \#(\gH\setminus \hat I)=\#\gH - \# \hat I< (\#\gH-1)/2 . 
\]
Hence $\# \hat I> (\#\gH+1)/2=h^*-1$.
\\ \indent
2)  The set $\gH\setminus \{\theta\}$ consists of $h^*-2$ different pairs of roots that sums to $\theta$.
Therefore, if $\# I\ge h^*$, then $I$ contains at least one such summable pair.
\end{proof}

\begin{rema}  There are abelian ideals $I$ with $\#I \ge h^*$.
Such ideals are not contained in $\gH$. 
\end{rema}
\begin{lm}    \label{lm:I-&-gamma}
If $I\subset \gH$ is an abelian ideal and $I\cup\{\gamma\}$ is a non-abelian ideal
for some $\gamma\in\Delta^+$, then \ {\sf (i)} $\gamma\in\gH$,  \ 
{\sf (ii)} $I=I(\ap)_{\sf min}$ \ for some $\ap\in\Pi_l$, and \ {\sf (iii)} $\#I=h^*-1$.
\end{lm}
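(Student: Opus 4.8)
The plan is to show the three claims essentially in order, leaning on the machinery of Section~\ref{sect:1} together with Lemmas~\ref{lm:sum-theta} and~\ref{lm:h*}. First, since $I\cup\{\gamma\}$ is a non-abelian ideal, Lemma~\ref{lm:sum-theta} produces $\gamma_1,\gamma_2\in I\cup\{\gamma\}$ with $\gamma_1+\gamma_2=\theta$. As $I$ itself is abelian, at least one of these must be $\gamma$; say $\gamma_1=\gamma$ and $\gamma_2\in I$. Because $I\subset\gH$, we have $\gamma_2\in\gH$, i.e. $(\gamma_2,\theta)>0$. Then $(\gamma,\theta)=(\theta-\gamma_2,\theta)=(\theta,\theta)-(\gamma_2,\theta)$, and one checks this is positive (using $(\gamma_2,\theta)\le(\theta,\theta)$ with equality only when $\gamma_2=\theta$, which is impossible since $\gamma_2\in I\subset\gH$ and $\theta\notin\gH$ as $I$ is abelian). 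This gives $\gamma\in\gH$, establishing~{\sf (i)}.

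For~{\sf (ii)}, the goal is to identify $I$ as one of the minimal ideals $I(\ap)_{\sf min}$. The key input is the characterisation from Section~\ref{subs:1-4}: an abelian ideal $I$ equals $I(\mu)_{\sf min}$ for some $\mu\in\Delta^+_l$ \emph{if and only if} $I\subset\gH$. Since we are given $I\subset\gH$, this immediately yields $I=I(\mu)_{\sf min}$ for some long $\mu$. It then remains to argue that $\mu$ may be taken to be a \emph{simple} long root $\ap\in\Pi_l$. Here I expect the argument to use the minuscule/canonical element description $\hat w_I=w_\mu s_{\ap_0}$ together with the generator criterion of Proposition~\ref{prop:generators}: the fact that $I\cup\{\gamma\}$ becomes non-abelian (rather than remaining an abelian ideal) should force $\hat w_I(\delta-\gamma)\notin\HP$, and analysing the image $\hat w_I(2\delta-\theta)=\rt(I)=\mu$ against the reflection structure should pin down $\mu\in\Pi_l$. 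This identification of the rootlet with a simple root is the main obstacle, since it is where the combinatorics of the affine Weyl group and the Heisenberg ideal $\gH$ genuinely enter.

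For~{\sf (iii)}, once~{\sf (ii)} is in hand the cardinality follows from the two counting facts already recorded. On one side, $I\cup\{\gamma\}$ is a non-abelian element of $\AD$, so Lemma~\ref{lm:h*}(1) gives $\#(I\cup\{\gamma\})\ge h^*$, i.e. $\#I\ge h^*-1$. On the other side, $I\subset\gH$ is abelian, so by the contrapositive of Lemma~\ref{lm:h*}(2) we must have $\#I<h^*$, i.e. $\#I\le h^*-1$. Combining the two inequalities yields $\#I=h^*-1$ exactly. Alternatively, one can read off $\#I(\ap)_{\sf min}=(\rho,\theta^\vee-\ap^\vee)+1$ from Section~\ref{subs:1-4} and verify it equals $h^*-1=(\rho,\theta^\vee)$ using that $(\rho,\ap^\vee)=1$ for simple $\ap$; this cross-check confirms consistency of~{\sf (ii)} and~{\sf (iii)}. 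I would present the inequality argument as the primary route since it is self-contained given the preceding lemmas, and mention the direct computation only as a sanity check.
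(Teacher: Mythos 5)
Your parts (i) and (iii) are sound and coincide with the paper's argument, but part (ii) as written is a gap: you declare the identification of the rootlet with a simple root to be ``the main obstacle'' and only gesture at an analysis of $\hat w_I(\delta-\gamma)$ against $\HP$ and the reflection structure, without carrying it out. That analysis is never needed. Notice that your counting argument for (iii) does not use (ii) at all --- it only uses that $I\subset\gH$ is abelian and $I\cup\{\gamma\}$ is non-abelian, via the two halves of Lemma~\ref{lm:h*} --- so you should prove (iii) first. Then (ii) follows at once: $I\subset\gH$ gives $I=I(\mu)_{\sf min}$ for some $\mu\in\Delta^+_l$, and $h^*-1=\#I=(\rho,\theta^\vee-\mu^\vee)+1=(\rho,\theta^\vee)-(\rho,\mu^\vee)+1$ forces $(\rho,\mu^\vee)=1$, i.e.\ $\mu\in\Pi$, hence $\mu\in\Pi_l$ since rootlets are long. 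This is exactly the paper's proof: it establishes $\#I=h^*-1$ by the same sandwich of inequalities and then quotes the fact that an abelian ideal contained in $\gH$ has cardinality at most $h^*-1$, with equality precisely for the ideals $I(\ap)_{\sf min}$, $\ap\in\Pi_l$. In short, what you relegate to a ``sanity check'' at the end of (iii) is in fact the proof of (ii); reorder the argument and the obstacle disappears.

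One small slip in (i): you rule out $\gamma_2=\theta$ on the grounds that ``$\theta\notin\gH$ as $I$ is abelian,'' but $\theta\in\gH$ (indeed $\theta$ belongs to every nonempty ideal, abelian or not, being the maximum of $\Delta^+$). The correct reason is simply that $\gamma_2=\theta$ would force $\gamma=0$, which is not a root. Cleaner still: once $\gamma+\mu=\theta$ with both summands positive roots distinct from $\theta$, one has $(\gamma,\theta^\vee)+(\mu,\theta^\vee)=2$ with each term in $\{0,1\}$, so both equal $1$ and $\gamma\in\gH$.
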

\begin{proof}  
Since $I$ is abelian, whereas $I\cup\{\gamma\}$ is not, there is $\mu\in I$ such that 
$\gamma+\mu=\theta$ (use Lemma~\ref{lm:sum-theta}). 
Hence $\gamma\in\gH$. By~\cite[Prop.\,4.2(4)]{imrn}, if $I$ is abelian, then $\#I \le (\rho,\theta^\vee)=h^*-1$. Furthermore,
$\#I=h^*-1$ if and only if $I=I(\ap)_{\sf min}$  for some $\ap\in\Pi$. Now, since $I\cup\{\gamma\}$ is 
non-abelian, we have $\#(I\cup\{\gamma\})\ge h^*$. Thus, $\#I=h^*-1$ and we are done.
\end{proof}

If $\eta+\eta'=\theta$, then we set $\tilde I=I\langle{\curge} \eta,\eta'\rangle=
\{\nu\in \Delta^+\mid \nu\curge \eta \text{ or } \nu\curge \eta'\}$. Then $\tilde I$ is not abelian and $\min(\tilde I)\subset \{\eta,\eta'\}$. Clearly, both roots $\eta,\eta'$ are either long or short.

\begin{lm}    \label{lm:I-hat}
If $\eta,\eta'\in\Delta^+_{\sf com}$ and $\eta+\eta'=\theta$, then 
\begin{itemize}
\item[\sf (i)] \ these two roots are incomparable and hence $\min(\tilde I)= \{\eta,\eta'\}$;
\item[\sf (ii)] \  the ideals $\tilde I\setminus \{\eta\}$ and $\tilde I\setminus \{\eta'\}$ are abelian;
\item[\sf (iii)] \ $\eta,\eta'\in \Delta_l$.
\end{itemize}
\end{lm}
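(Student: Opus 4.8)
The plan is to treat the three assertions in order, drawing only on the abelianness of the principal ideals $I\langle{\curge}\eta\rangle$ and $I\langle{\curge}\eta'\rangle$ (guaranteed by $\eta,\eta'\in\Delta^+_{\sf com}$) together with the extremal role of $\theta$.

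For {\sf (i)} I would argue by contradiction. If $\eta$ and $\eta'$ were comparable, say $\eta\curge\eta'$, then both $\eta$ and $\eta'$ would lie in $I\langle{\curge}\eta'\rangle$; but $\eta+\eta'=\theta\in\Delta^+$, so this ideal would contain a summable pair, contradicting its abelianness. Hence $\eta,\eta'$ are incomparable. Since every element of $\tilde I$ dominates $\eta$ or $\eta'$, and nothing strictly below $\eta$ can lie in $\tilde I$ without forcing $\eta\curge\eta'$ (and symmetrically for $\eta'$), both $\eta$ and $\eta'$ are minimal; combined with the inclusion $\min(\tilde I)\subset\{\eta,\eta'\}$ noted before the lemma, this gives $\min(\tilde I)=\{\eta,\eta'\}$.

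For {\sf (ii)} I would first record that deleting a minimal element of an ideal again yields an ideal, so by {\sf (i)} both $\tilde I\setminus\{\eta\}$ and $\tilde I\setminus\{\eta'\}$ are ideals. The point is then to show that $\{\eta,\eta'\}$ is the \emph{only} pair of elements of $\tilde I$ whose sum is a root, since deleting either member then destroys every summable pair. Writing $\tilde I=I\langle{\curge}\eta\rangle\cup I\langle{\curge}\eta'\rangle$, take $\gamma_1,\gamma_2\in\tilde I$ with $\gamma_1+\gamma_2\in\Delta^+$. If both lie in the same principal ideal we contradict its abelianness, so I may assume $\gamma_1\curge\eta$ and $\gamma_2\curge\eta'$. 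Then $\gamma_1+\gamma_2\curge\eta+\eta'=\theta$, and since $\theta$ is the highest root this forces $\gamma_1+\gamma_2=\theta$, whence $\gamma_1=\eta$ and $\gamma_2=\eta'$. This settles abelianness of both deletions.

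The main obstacle is {\sf (iii)}, for which I would run a root-string argument. First, since $\theta$ is long and $\eta,\eta'\ne\theta$, each of $(\eta,\theta^\vee),(\eta',\theta^\vee)$ is at most $1$ (otherwise $2\theta-\eta=\theta+\eta'$ would be a root strictly above $\theta$), while their sum is $(\theta,\theta^\vee)=2$; hence both equal $1$. Suppose now, for contradiction, that $\eta$ is short. From $(\eta,\theta^\vee)=1$ one computes $\langle\theta,\eta^\vee\rangle=(\theta,\theta)/(\eta,\eta)\ge2$, so the $\eta$-string through $\theta$, which terminates upward at $\theta$ because $\theta$ is highest, reaches at least $\theta-2\eta$. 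Thus $\theta-2\eta=\eta'-\eta\in\Delta$, which makes $\eta$ and $\eta'$ comparable and contradicts {\sf (i)}. Therefore $\eta$, and by the same argument $\eta'$, is long. The identity $s_\theta(\eta)=\eta-\langle\eta,\theta^\vee\rangle\theta=-\eta'$ shows moreover that the two roots have equal length, which is the conceptual reason behind the long/short dichotomy observed before the lemma.
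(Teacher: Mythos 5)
Your proof is correct, and while parts {\sf (i)} and {\sf (ii)} essentially coincide with the paper's argument (the paper declares {\sf (i)} obvious and proves {\sf (ii)} by the same height/ordering comparison: a summable pair split between $I\langle{\curge}\eta\rangle$ and $I\langle{\curge}\eta'\rangle$ forces a sum $\succ\theta$), your part {\sf (iii)} takes a genuinely different route. The paper passes to $J=\tilde I\setminus\{\eta,\eta'\}$ and its minuscule element $\hat w_J\in\HW$: the two abelian one-element extensions give $\hat w_J(\delta-\eta)=\beta$ and $\hat w_J(\delta-\eta')=\beta'$ in $\HP$, so $\beta+\beta'=\hat w_J(2\delta-\theta)=\rt(J)\in\Delta^+_l$, forcing $\beta,\beta'$ to be adjacent \emph{long} simple roots and hence $\eta,\eta'$ long. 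You instead argue elementarily: $\langle\eta,\theta^\vee\rangle=\langle\eta',\theta^\vee\rangle=1$, and if $\eta$ were short then $\langle\theta,\eta^\vee\rangle\ge 2$ would put $\theta-2\eta=\eta'-\eta$ in the $\eta$-string through $\theta$, making $\eta,\eta'$ comparable and contradicting {\sf (i)}. Both are valid; your version is shorter, self-contained, and avoids the affine machinery entirely. What the paper's computation buys is the pair of adjacent simple roots $(\beta,\beta')$ attached to $(\eta,\eta')$, which is exactly what gets reused in Proposition~\ref{pr:sosed-prostye} to build the bijection with edges of the Dynkin diagram --- so the heavier argument is not wasted there, whereas your root-string argument yields only the length statement. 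Your closing observation that $s_\theta(\eta)=-\eta'$ (so the two roots are always of equal length) is a nice touch consistent with the remark preceding the lemma.
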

\begin{proof}
{\sf (i)}  Obvious.

{\sf (ii)}  Assume that $\tilde I\setminus \{\eta\}$ is non-abelian. Then there are 
$\gamma_1,\gamma_2\in \tilde I\setminus \{\eta\}$ such that 
$\gamma_1+\gamma_2=\theta$. Since both $I\langle{\curge}\eta\rangle$  and $I\langle{\curge}\eta'\rangle$ 
are abelian, up to permutation of indices, one has
\begin{gather*}
\text{ $\gamma_1\in I\langle{\curge}\eta'\rangle$ and $\gamma_2\in I\langle{\curge}\eta\rangle\setminus\{\eta\}$,} \\ 
\text{ while  $\gamma_1\not\in I\langle{\curge}\eta\rangle$  and $\gamma_2\not\in  I\langle{\curge}\eta'\rangle$.}
\end{gather*}
This means that $\hot(\gamma_1)\ge \hot(\eta')$ and $\hot(\gamma_2)> \hot(\eta)$. Hence
$\hot(\gamma_1+\gamma_2)>\hot(\theta)$. A contradiction! Therefore, $\tilde I\setminus \{\eta\}\in\Ab$ and likewise for $\tilde I\setminus \{\eta'\}$.

{\sf (iii)}  Now, $J=\tilde I\setminus \{\eta,\eta'\}$ is an abelian ideal that has two abelian extensions:
$J \mapsto J\cup\{\eta\}$, $J \mapsto J\cup\{\eta'\}$. Let $\hat w_J\in \HW$ be the minuscule element
corresponding to $J$. The first extension means that 
$\hat w_J(\delta-\eta)=\beta$ for some $\beta\in\HP$. Likewise, $\hat w_J(\delta-\eta')=\beta'\in\HP$, 
see Prop.~\ref{prop:generators}.
Since $\eta+\eta'=\theta$, we have $\hat w_J(2\delta-\theta)=\hat w_J(2\delta-\eta-\eta')=\beta+\beta'$. 
On the other hand, $\hat w_J(2\delta-\theta)=\tau(J)$, and it is an element of $\Delta^+_l$, see 
Section~\ref{subs:1-4}. Hence $\beta,\beta'$ are adjacent simple roots of $\Delta$. Finally, because 
$\theta$ is long, then so is $\beta+\beta'$. This is only possible if $\beta,\beta'$ are long and hence 
$\eta,\eta'$ are also long.
\end{proof}

\begin{rema}
All the ideals $I(\ap)_{\sf min}$ ($\ap\in\Pi_l$) have the same cardinality, $h^*-1=(\rho,\theta^\vee)$; 
while this is not the case for $I(\ap)_{\sf max}$. A general formula for $\#(I(\ap)_{\sf max})$ is given in
\cite{cp3}.
\end{rema}

\section{Glorious pairs, adjacent long simple roots, and incident edges of the Dynkin diagram}   
\label{sect:3}

\noindent
\begin{df}  \label{def:glor-par}
A pair $\{\eta,\eta'\}\subset \Delta^+$ is said to be {\it glorious}, if these two roots are incomparable and 
$\eta+\eta'=\theta$.
\end{df}

Set $\thi=\sum_{\ap\in\Pi} \lfloor \{\theta:\ap\}/2\rfloor \ap$. If $\Delta$ is of
type $\GR{A}{n}$, then $\thi=0$. For all other types, one can prove uniformly that 
$\thi\in\Delta^+$,
$\thi$ is not commutative,  and $\thi$ is the unique maximal element of $\Delta^+_{\sf nc}:=\Delta^+\setminus \Delta^+_{\sf com}$, see~\cite[Section\,3]{airs2}.

\begin{lm}    \label{lm:icomparable}
Suppose that $\eta+\eta'=\theta$. Then $\eta,\eta'$ are incomparable $\Leftrightarrow$ they are
commutative.
\end{lm}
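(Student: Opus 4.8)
The plan is to prove the two implications separately, since one of them is already recorded in the excerpt. Throughout I note that $\eta\ne\eta'$: otherwise $\theta=2\eta$, which is impossible in a (reduced) root system.

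For the implication \emph{commutative} $\Rightarrow$ \emph{incomparable}, I would simply invoke Lemma~\ref{lm:I-hat}(i): if $\eta,\eta'\in\Delta^+_{\sf com}$ and $\eta+\eta'=\theta$, then they are incomparable. Alternatively one can argue directly. Were $\eta\curge\eta'$, then both $\eta$ and $\eta'$ would lie in the ideal $I\langle{\curge}\eta'\rangle$, which is abelian because $\eta'$ is commutative; but $\eta+\eta'=\theta\in\Delta^+$, contradicting abelianness. Hence $\eta,\eta'$ cannot be comparable.

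For the converse \emph{incomparable} $\Rightarrow$ \emph{commutative}, I would argue by contraposition. Suppose $\eta$ is not commutative, i.e. the principal ideal $I\langle{\curge}\eta\rangle$ is non-abelian. Then Lemma~\ref{lm:sum-theta} produces $\gamma_1,\gamma_2\in I\langle{\curge}\eta\rangle$ with $\gamma_1+\gamma_2=\theta$. Since $\gamma_1\curge\eta$ and $\gamma_2\curge\eta$, each difference $\gamma_i-\eta$ is a non-negative integral combination of simple roots, and hence so is their sum $\theta-2\eta$. Consequently $\eta'-\eta=(\theta-\eta)-\eta=\theta-2\eta\curge 0$, so $\eta'\curge\eta$ and the pair is comparable. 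Running the identical argument with the roles of $\eta$ and $\eta'$ exchanged shows that if \emph{either} root fails to be commutative, then $\{\eta,\eta'\}$ is comparable; equivalently, incomparability forces \emph{both} $\eta$ and $\eta'$ to be commutative.

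The entire content thus reduces to Lemma~\ref{lm:sum-theta} and Lemma~\ref{lm:I-hat}, both already available, so no genuinely new machinery is needed. The only point requiring care is the elementary ordering bookkeeping in the converse: turning ``$\gamma_1,\gamma_2$ both dominate $\eta$ and sum to $\theta$'' into the domination $\theta\curge 2\eta$, and then into $\eta'\curge\eta$. This is precisely the step where the hypothesis $\eta+\eta'=\theta$ is used decisively, and it is the one I would write out most explicitly; everything else is citation.
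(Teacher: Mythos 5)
Your proposal is correct and follows essentially the same route as the paper: the direction ``commutative $\Rightarrow$ incomparable'' is dispatched via Lemma~\ref{lm:I-hat}(i), and the converse is proved by contraposition, extracting $\gamma_1,\gamma_2\curge\eta$ with $\gamma_1+\gamma_2=\theta$ from non-commutativity and concluding $\theta\curge 2\eta$, hence $\eta'\curge\eta$. The only cosmetic difference is that the paper routes the last step through $\eta\curle\thi\curle\eta'$ while you compare $\eta'-\eta=\theta-2\eta$ directly, which is if anything slightly cleaner.
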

\begin{proof}
The implication `$\Leftarrow$' is obvious (cf. Lemma~\ref{lm:I-hat}(i)). To prove the other implication, assume
that $\eta\not\in\Delta^+_{\sf com}$. Then there are $\gamma_1,\gamma_2\curge \eta$ such that
$\gamma_1+\gamma_2=\theta$. Therefore, $\theta\curge 2\eta$. In other words, 
$\eta\curle \thi$. This clearly implies that $\eta'\curge \thi$ and hence 
$\eta'\curge\thi \curge \eta$.
\end{proof}

Note that  
both elements of a glorious pair belong to $\gH$ and they are long in view of Lemma~\ref{lm:I-hat}(iii).

\subsection{From glorious pairs to adjacent simple roots}  \label{subs:v-odnu}
\leavevmode\par
\begin{prop}    \label{pr:sosed-prostye}
For a glorious pair $\{\eta,\eta'\}$ and $\tilde I=I\langle{\curge} \eta,\eta'\rangle$ as in 
Section~\ref{sect:2}, we have
\begin{itemize}
\item[\sf (i)] \  $\#\tilde I=h^*$ and $\tilde I=I(\ap)_{\sf min}\cup \{\eta'\}=I(\ap')_{\sf min}\cup \{\eta\}$ \ for 
some $\ap, \ap'\in\Pi_l$;
\item[\sf (ii)] \ the long simple roots $\ap, \ap'$ are adjacent in the Dynkin diagram;
\item[\sf (iii)] \ $\kl(\eta)=\ap'$ and $\kl(\eta')=\ap$;
\item[\sf (iv)] \ $\eta\in\min(I(\ap)_{\sf min})$ and $\eta'\in\min(I(\ap')_{\sf min})$.
\end{itemize}
\end{prop}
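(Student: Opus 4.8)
The plan is to read off (i) and (iv) from the cardinality and ideal-extension lemmas already at hand, and then to obtain (ii) and (iii) by a direct computation with the minuscule element attached to the ``core'' ideal $\tilde I\setminus\{\eta,\eta'\}$. Throughout I use that $\{\eta,\eta'\}$ glorious forces $\eta,\eta'$ commutative (Lemma~\ref{lm:icomparable}) and long (Lemma~\ref{lm:I-hat}(iii)).

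First I record that, since $\eta,\eta'\in\gH$ and $\gH$ is an ideal, we have $\tilde I\subseteq\gH$. By Lemma~\ref{lm:I-hat}, $\eta,\eta'$ are exactly the minimal elements of $\tilde I$, and both $\tilde I\setminus\{\eta\}$ and $\tilde I\setminus\{\eta'\}$ are abelian ideals inside $\gH$; moreover $\tilde I$ is non-abelian because $\eta+\eta'=\theta$. Feeding $\tilde I\setminus\{\eta'\}$ and $\gamma=\eta'$ into Lemma~\ref{lm:I-&-gamma} shows $\tilde I\setminus\{\eta'\}=I(\ap)_{\sf min}$ for some $\ap\in\Pi_l$ with $\#(\tilde I\setminus\{\eta'\})=h^*-1$; the symmetric input gives $\tilde I\setminus\{\eta\}=I(\ap')_{\sf min}$ with $\ap'\in\Pi_l$. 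This is precisely (i), together with $\#\tilde I=h^*$. Part (iv) is then formal: minimality is inherited by subposets, so from $\eta\in\min(\tilde I)$ and $\eta\in I(\ap)_{\sf min}\subseteq\tilde I$ we get $\eta\in\min(I(\ap)_{\sf min})$, and symmetrically $\eta'\in\min(I(\ap')_{\sf min})$.

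For (ii) and (iii) I would work with $J:=\tilde I\setminus\{\eta,\eta'\}$, which is an abelian ideal (it arises from the abelian ideal $\tilde I\setminus\{\eta\}$ by deleting its minimal element $\eta'$), and with its minuscule element $\hat w_J$. Since $J\cup\{\eta\}$ and $J\cup\{\eta'\}$ are abelian, Proposition~\ref{prop:generators}(2) yields $\hat w_J(\delta-\eta)=\beta\in\HP$ and $\hat w_J(\delta-\eta')=\beta'\in\HP$, whence $\hat w_J(2\delta-\theta)=\beta+\beta'$; as in Lemma~\ref{lm:I-hat}(iii) this rootlet lies in $\Delta^+_l$, forcing $\beta,\beta'$ to be adjacent long simple roots. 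Because $I(\ap)_{\sf min}=J\cup\{\eta\}$, the same proposition gives $\hat w_{I(\ap)_{\sf min}}=s_\beta\hat w_J$, so (using (iv)) $\kl(\eta)=\hat w_{I(\ap)_{\sf min}}(-\delta+\eta)=s_\beta\hat w_J(-\delta+\eta)=s_\beta(-\beta)=\beta$, and symmetrically $\kl(\eta')=\beta'$.

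It remains to identify $\ap,\ap'$ with $\beta',\beta$, which is where the one genuine computation sits. I evaluate the rootlet $\ap=\tau(I(\ap)_{\sf min})=s_\beta\hat w_J(2\delta-\theta)=s_\beta(\beta+\beta')$. Since $\beta,\beta'$ are adjacent and of equal length, $(\beta',\beta^\vee)=-1$, so $s_\beta(\beta')=\beta+\beta'$ and the sum collapses to $\ap=\beta'$; the mirror computation with $s_{\beta'}\hat w_J$ gives $\ap'=\beta$. Combining with the previous paragraph, $\kl(\eta)=\beta=\ap'$ and $\kl(\eta')=\beta'=\ap$, which is (iii), and the adjacency of $\ap=\beta'$ and $\ap'=\beta$ in the Dynkin diagram is inherited from that of $\beta,\beta'$, giving (ii). The main obstacle is purely bookkeeping: one must not confuse which minimal root is deleted to form each $I(\cdot)_{\sf min}$, must apply $s_\beta\hat w_J$ versus $s_{\beta'}\hat w_J$ to the correct vectors ($-\delta+\eta$ for the class, $2\delta-\theta$ for the rootlet), and must invoke the equal-length adjacency identity $(\beta',\beta^\vee)=-1$ — the only place where the longness of $\eta,\eta'$ is actually used.
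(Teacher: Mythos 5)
Your proposal is correct and follows essentially the same route as the paper: part (i) via Lemma~\ref{lm:I-hat}(ii) combined with Lemma~\ref{lm:I-&-gamma}, part (iv) as an immediate consequence, and parts (ii)--(iii) by passing to $J=\tilde I\setminus\{\eta,\eta'\}$, reading off $\beta,\beta'\in\HP$ from the two abelian extensions, and identifying $\ap=\beta'$, $\ap'=\beta$ through the rootlet computation $s_\beta(\beta+\beta')=\beta'$. The only cosmetic difference is that you spell out the bookkeeping (which root is deleted to form which $I(\cdot)_{\sf min}$, and where the equal length of $\beta,\beta'$ enters) that the paper leaves implicit.
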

\begin{proof}
{\sf (i)} This follows from Lemmata~\ref{lm:I-&-gamma} and \ref{lm:I-hat}(ii).

{\sf (ii)} \  Using $J:=\tilde I\setminus\{\eta,\eta'\}$, the minuscule element $\hat w_J\in\HW$, and two
abelian extensions of $J$, as in the proof of Lemma~\ref{lm:I-hat}(iii), we obtain
\[
\text{ $\hat w_J(\delta-\eta)=\beta\in \Pi_l$, \ $\hat w_J(\delta-\eta')=\beta'\in\Pi_l$, \ 
and $\rt(J)=\beta+\beta'$.} 
\]
This also means that $\hat w_{J\cup\{\eta\}}=s_{\beta}\hat w_J$ and 
$\hat w_{J\cup\{\eta'\}}=s_{\beta'}\hat w_J$, see Prop.~\ref{prop:generators}. Then 
\begin{gather*}
\ap =\rt(I(\ap)_{\sf min})=\rt(J\cup\{\eta\})=s_{\beta}\hat w_J(2\delta-\theta)=s_\beta(\beta+\beta')=\beta'  , \\
\ap' =\rt(I(\ap')_{\sf min})=\rt(J\cup\{\eta'\})=s_{\beta'}\hat w_J(2\delta-\theta)=s_{\beta'}(\beta+\beta')=\beta .
\end{gather*}

{\sf (iii)} \  By the previous part of the proof, we have 
$\hat w_{J\cup\{\eta\}}=s_{\ap'}\hat w_J$ and $\hat w_{J\cup\{\eta'\}}=s_{\ap}\hat w_J$, and by 
definition of the {\it class} we have
\begin{gather*}
  \kl(\eta)=-s_{\ap'}\hat w_J(\delta-\eta)=-s_{\ap'}(\ap')=\ap', \\
  \kl(\eta')=-s_{\ap}\hat w_J(\delta-\eta')=-s_{\ap}(\ap)=\ap .   
\end{gather*}

{\sf (iv)} \ This follows from {\sf (i)}.
\end{proof}

\begin{rema}  Part (ii) of the proposition already appears in \cite[Prop.\,3.4(1)]{imrn}. However, we essentially reproduced a proof in order to have the necessary notation for Part (iii).
\end{rema}

It follows from Proposition~\ref{pr:sosed-prostye} that one associates an edge of the Dynkin (sub-)diagram
of $\Pi_l$ to any glorious pair. Below, we prove that this map is actually a bijection and obtain 
some more properties for the quadruple $(\eta,\eta',\ap,\ap')$ of Proposition~\ref{pr:sosed-prostye}.
To achieve this goal, one should be able to construct a glorious pair starting from a pair of adjacent long 
simple roots.

\subsection{From adjacent simple roots to glorious pairs}   
\label{subs:v-druguyu}
Let $\ap\in\Pi_l$. By \cite[Prop.\,4.6]{imrn}, there is a natural bijection between $\min(I(\ap)_{\sf min})$ 
and the simple roots adjacent to $\ap$, and this correspondence respects the length. More precisely, let 
$w_\ap\in W$ be the unique shortest element taking $\theta$ to $\ap$. If $\ap'\in\Pi$ is adjacent to $\ap$, 
then $\eta:=w_\ap^{-1}(\ap+\ap')\in \min(I(\ap)_{\sf min})$ corresponds to $\ap'$. In particular, 
$\eta\in\Delta^+_{\sf com}$. (Here $\ap'$ is not necessarily long and $\|\ap'\|=\|\eta\|$.) Suppose now 
that $\ap'$ is long. Then so is $\eta$. Furthermore,
$(\theta,\eta)=(\ap,\ap+\ap')>0$, hence $\eta':=\theta-\eta$ is a long root, too.

\begin{prop}   \label{prop:symm-construct}
If $\ap,\ap'\in\Pi_l$ are adjacent and $\eta=w_\ap^{-1}(\ap+\ap')$, then  
$\eta'=w_{\ap'}^{-1}(\ap+\ap')$ has the property that $\eta+\eta'=\theta$. Furthermore,
$\kl(\eta)=\ap'$ and $\kl(\eta')=\ap$.
\end{prop}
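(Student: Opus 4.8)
The plan is to prove the two class identities first by a direct computation in the affine Weyl group, and then to derive $\eta+\eta'=\theta$ from the internal structure of the ideals $I(\ap)_{\sf min}$ and $I(\ap')_{\sf min}$, using Lemmata~\ref{lm:sum-theta} and \ref{lm:h*}. I begin by recalling from \cite[Prop.\,4.6]{imrn} (quoted in Section~\ref{subs:v-druguyu}) that $\eta\in\min(I(\ap)_{\sf min})$ and, symmetrically, $\eta'\in\min(I(\ap')_{\sf min})$, and that both are long. To compute $\kl(\eta)$ I apply the class definition of Section~\ref{subs:1-3} with the ideal $I(\ap)_{\sf min}$, whose minuscule element is $\hat w_{I(\ap)_{\sf min}}=w_\ap s_{\ap_0}$ (Section~\ref{subs:1-4}). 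Normalising $(\theta,\theta)=2$ so that $\ap_0^\vee=\ap_0$, and inserting the numerical input $(\eta,\theta)=(\ap,\ap+\ap')=1$ from Section~\ref{subs:v-druguyu}, a one-line use of the reflection formula gives $s_{\ap_0}(\eta-\delta)=\eta-\theta$. Since $w_\ap$ fixes $\delta$ and preserves $V$, I then obtain
\[
\kl(\eta)=w_\ap(\eta-\theta)=w_\ap(\eta)-w_\ap(\theta)=(\ap+\ap')-\ap=\ap' ,
\]
and the symmetric computation yields $\kl(\eta')=\ap$. This already settles the second assertion of the Proposition.

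For $\eta+\eta'=\theta$ I would set $J:=I(\ap)_{\sf min}\setminus\{\eta\}$ and $J':=I(\ap')_{\sf min}\setminus\{\eta'\}$, both abelian ideals of cardinality $h^*-2$ contained in $\gH$. Because $\kl(\eta)=\ap'$, Proposition~\ref{prop:generators} gives $\hat w_{I(\ap)_{\sf min}}=s_{\ap'}\hat w_J$, so that
\[
\rt(J)=\hat w_J(2\delta-\theta)=s_{\ap'}\bigl(\rt(I(\ap)_{\sf min})\bigr)=s_{\ap'}(\ap)=\ap+\ap' ,
\]
and likewise $\rt(J')=s_{\ap}(\ap')=\ap+\ap'$. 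Since $J$ and $J'$ are abelian ideals lying in $\gH$, the characterisation $I\subset\gH\Leftrightarrow I=I(\rt(I))_{\sf min}$ of \cite[Theorem\,4.3]{imrn} forces $J=I(\ap+\ap')_{\sf min}=J'$. I expect this identification $J=J'$ to be the main obstacle: it is what ties the two a priori unrelated constructions for $\ap$ and $\ap'$ together, and it rests precisely on the rootlet computation above combined with the $\gH$-characterisation.

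With $J=J'$ in hand, the conclusion follows quickly. I form $\tilde I:=J\cup\{\eta,\eta'\}=I(\ap)_{\sf min}\cup\{\eta'\}=I(\ap')_{\sf min}\cup\{\eta\}$, which is an ideal (obtained from the ideal $I(\ap)_{\sf min}$ by adjoining the minimal element $\eta'$ of the ideal $I(\ap')_{\sf min}=J\cup\{\eta'\}$) and satisfies $\tilde I\subset\gH$ and $\#\tilde I=(h^*-2)+2=h^*$, the last equality using $\eta\neq\eta'$ (their classes $\ap',\ap$ are distinct). By the second assertion of Lemma~\ref{lm:h*}, $\tilde I$ is not abelian, so by Lemma~\ref{lm:sum-theta} there is a pair $\{\gamma_1,\gamma_2\}\subset\tilde I$ with $\gamma_1+\gamma_2=\theta$. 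As $I(\ap)_{\sf min}$ is abelian, this pair cannot lie in $I(\ap)_{\sf min}$, so up to order $\gamma_2=\eta'$ and $\gamma_1\in I(\ap)_{\sf min}$. If $\gamma_1\neq\eta$, then $\gamma_1\in J=J'\subset I(\ap')_{\sf min}$, whence $\{\gamma_1,\eta'\}$ would be a summable pair inside the abelian ideal $I(\ap')_{\sf min}$, a contradiction. Therefore $\gamma_1=\eta$ and $\eta+\eta'=\theta$, which completes the argument.
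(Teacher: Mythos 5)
Your argument is correct, but only half of it follows the paper. The computation of $\kl(\eta)$ is exactly the paper's Eq.~\eqref{eq:class-eta}: same ideal $I(\ap)_{\sf min}$, same minuscule element $w_\ap s_{\ap_0}$, same use of $(\eta,\theta)=(\ap,\ap+\ap')$. For the identity $\eta+\eta'=\theta$, however, you diverge. The paper's proof is a short Weyl-group computation: it establishes the factorisations $w_{\ap}=s_{\ap'}w_{\ap+\ap'}$ and $w_{\ap'}=s_{\ap}w_{\ap+\ap'}$ by a length count, whence $w_\ap^{-1}(\ap+\ap')+w_{\ap'}^{-1}(\ap+\ap')=w_{\ap+\ap'}^{-1}(\ap+\ap')=\theta$ in one line (and these factorisations are reused later, e.g.\ in Theorem~\ref{thm:sosed-pary}). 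You instead identify $I(\ap)_{\sf min}\setminus\{\eta\}$ and $I(\ap')_{\sf min}\setminus\{\eta'\}$ with $I(\ap+\ap')_{\sf min}$ via the rootlet computation $\rt(J)=s_{\ap'}(\ap)=\ap+\ap'$ and the characterisation of the ideals $I(\mu)_{\sf min}$ as the abelian ideals inside $\gH$, and then force the summable pair of the non-abelian ideal $I(\ap)_{\sf min}\cup I(\ap')_{\sf min}$ (cardinality $h^*$, Lemmata~\ref{lm:sum-theta} and \ref{lm:h*}) to be $\{\eta,\eta'\}$. This is essentially the paper's own alternative route sketched in Remark~\ref{rem:another-appr}, which there rests on the cited equality $I(\ap)_{\sf min}\cap I(\ap')_{\sf min}=I(\ap+\ap')_{\sf min}$ from \cite[Theorem\,2.1]{imrn}; your version is self-contained in that you re-derive this intersection from Proposition~\ref{prop:generators} and the rootlets, and you supply the exclusion argument (any summable pair meeting $J$ would sit inside an abelian ideal) that the Remark leaves implicit. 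The trade-off: the paper's route is shorter and produces reusable factorisations of the $w_\mu$'s; yours buys the structural facts $I(\ap)_{\sf min}\setminus I(\ap+\ap')_{\sf min}=\{\eta\}$ and $\tilde I=I(\ap)_{\sf min}\cup\{\eta'\}$ along the way, which the paper only obtains afterwards in Proposition~\ref{pr:sosed-prostye}. One small point worth making explicit: your identification $J=I(\ap+\ap')_{\sf min}$ needs $J\ne\varnothing$, i.e.\ $h^*\ge 3$, which indeed holds whenever $\Pi_l$ contains an adjacent pair.
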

\begin{proof}
1) \ Recall that $\ell(w_{\ap+\ap'})=(\rho,\theta^\vee-(\ap+\ap')^\vee)=h^*-3$, see Section~\ref{subs:1-4}.
Then
$\ell(s_\ap w_{\ap+\ap'})\le h^*-2=(\rho,\theta^\vee-(\ap')^\vee)$ and $s_\ap w_{\ap+\ap'}$ takes $\theta$
to $\ap'$. Hence one must have $w_{\ap'}=s_{\ap}w_{\ap+\ap'}$ and also
$w_\ap=s_{\ap'}w_{\ap+\ap'}$. (The first equality means that a ``shortest reflection route" inside $\Delta^+_l$ from 
$\theta$ to $\ap'$ can be realised as a shortest route from $\theta$ 
to $\ap+\ap'$ and then the last step $\ap+\ap'\stackrel{s_\ap}{\longrightarrow} \ap'$.) Now
$w_\ap^{-1}(\ap+\ap')=w_{\ap+\ap'}^{-1}(\ap')$. Hence \\
\centerline{$w_\ap^{-1}(\ap+\ap')+w_{\ap'}^{-1}(\ap+\ap')=w_{\ap+\ap'}^{-1}(\ap'+\ap)=\theta$.}

2) To compute the class, we need the fact that the minuscule element of $I(\ap)_{\sf min}$ in 
$\HW$ is 
$w_\ap s_{\ap_0}$. (This element already appeared in Section~\ref{subs:v-odnu} as $s_{\ap'}\hat w_J$, which is the same, because $\hat w_J=w_{\ap+\ap'}s_{\ap_0}$ and  $s_{\ap'}w_{\ap+\ap'}=w_{\ap}$.) Therefore, by definition,
\beq   \label{eq:class-eta}
   \kl(\eta)=-w_\ap s_{\ap_0}(\delta-\eta)=-w_\ap(\theta-\eta)=-\ap+(\ap+\ap')=\ap' .
\eeq
And likewise for $\eta'$.
\end{proof}

\noindent
Thus, both $\eta$ and $\eta'$ are long, commutative, and sum to $\theta$, i.e., we have obtained a 
glorious pair associated with $(\ap,\ap')$. The above result shows that the construction of  $\eta$ and 
$\eta'$ is symmetric w.r.t. $\ap$ and $\ap'$, and the classes of $\eta$ and $\eta'$ are computable. 

\begin{rmk}  \label{rem:another-appr}
Proposition~\ref{prop:symm-construct} provides an explicit formula for $\eta,\eta'$ via adjacent 
$\ap,\ap'\in\Pi_l$. There is also another approach. By~\cite[Theorem\,2.1]{imrn}, for two adjacent long simple roots, 
$I(\ap)_{\sf min}\cap I(\ap')_{\sf min}=I(\ap+\ap')_{\sf min}$. Furthermore,
$\# I(\ap+\ap')_{\sf min}=(\rho, \theta^\vee-(\ap+\ap')^\vee)+1=h^*-2$, see Section~\ref{subs:1-4}. 
Therefore, 
$\#\bigl(I(\ap)_{\sf min}\setminus I(\ap+\ap')_{\sf min}\bigr)=1$ and likewise for $\ap'$. Hence
$I(\ap)_{\sf min}\setminus I(\ap+\ap')_{\sf min}=\{\eta\}$ and 
$I(\ap')_{\sf min}\setminus I(\ap+\ap')_{\sf min}=\{\eta'\}$. Since $\#\bigl(
I(\ap)_{\sf min}\cup I(\ap')_{\sf min}\bigr)=h^*$ and
$I(\ap)_{\sf min}\cup I(\ap')_{\sf min}\subset\gH$, this ideal cannot be abelian, i.e., $\{\eta,\eta'\}$ is a glorious pair.
\end{rmk}

\subsection{Bijections and examples}
It is easily seen that the constructions of Sections~\ref{subs:v-odnu} and \ref{subs:v-druguyu} are mutually inverse. This yields one of the main results of the article.

\begin{thm}     \label{thm:main}
There is a natural bijection
\begin{gather*}
\{ \text{the glorious pairs in } \ \Delta^+\}  \stackrel{1:1}{\longleftrightarrow} \{ \text{the pairs of adjacent roots in } \Pi_l\}, \\
(\eta,\eta')  \stackrel{1:1}{\longmapsto}  (\ap,\ap')
\end{gather*} 
such that $\eta\in \min(I(\ap)_{\sf min})$, $\eta'\in \min(I(\ap')_{\sf min})$,
$\kl(\eta)=\ap'$, and $\kl(\eta')=\ap$.
\end{thm}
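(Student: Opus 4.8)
The plan is to show that the two constructions of Sections~\ref{subs:v-odnu} and~\ref{subs:v-druguyu} are mutually inverse. I will write $\Phi$ for the map of Proposition~\ref{pr:sosed-prostye}, sending a glorious pair $\{\eta,\eta'\}$ to the adjacent pair $\{\ap,\ap'\}\subset\Pi_l$ singled out there, with the labelling fixed by $\eta\in\min(I(\ap)_{\sf min})$ (so that $\kl(\eta)=\ap'$ and $\kl(\eta')=\ap$), and $\Psi$ for the map of Proposition~\ref{prop:symm-construct}, sending adjacent $\ap,\ap'\in\Pi_l$ to the pair $\eta=w_\ap^{-1}(\ap+\ap')$, $\eta'=w_{\ap'}^{-1}(\ap+\ap')$. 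First I would note that both maps are well defined: $\Phi$ by Proposition~\ref{pr:sosed-prostye}, and $\Psi$ because the discussion following Proposition~\ref{prop:symm-construct} shows that the resulting $\eta,\eta'$ are long, commutative, and sum to $\theta$, hence incomparable by Lemma~\ref{lm:icomparable}, i.e.\ a genuine glorious pair.

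Next I would verify $\Phi\circ\Psi=\mathrm{id}$, which is immediate. Given adjacent $\ap,\ap'$, apply $\Psi$ to obtain $\{\eta,\eta'\}$; Proposition~\ref{prop:symm-construct} already records $\kl(\eta)=\ap'$ and $\kl(\eta')=\ap$. Since $\Phi$ recovers the adjacent pair precisely as $\{\kl(\eta),\kl(\eta')\}$, with the label $\ap$ attached to the root $\eta'$ (equivalently, $\eta\in\min(I(\ap)_{\sf min})$), we return to $\{\ap,\ap'\}$ with the matching intact.

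The substantive direction is $\Psi\circ\Phi=\mathrm{id}$. Starting from a glorious pair $\{\eta,\eta'\}$, the map $\Phi$ produces $(\ap,\ap')$ with $\eta\in\min(I(\ap)_{\sf min})$ and $\kl(\eta)=\ap'$, and I must check that $\Psi(\ap,\ap')=w_\ap^{-1}(\ap+\ap')$ equals $\eta$. By \cite[Prop.\,4.6]{imrn}, the element $\eta$ of $\min(I(\ap)_{\sf min})$ has the form $w_\ap^{-1}(\ap+\beta)$ for a \emph{unique} simple root $\beta$ adjacent to $\ap$, and the computation~\eqref{eq:class-eta}, valid verbatim with $\beta$ in place of $\ap'$, yields $\kl\bigl(w_\ap^{-1}(\ap+\beta)\bigr)=\beta$. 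Hence $\beta=\kl(\eta)=\ap'$, so $\eta=w_\ap^{-1}(\ap+\ap')=\Psi(\ap,\ap')$, and symmetrically $\eta'=w_{\ap'}^{-1}(\ap+\ap')$.

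The only point requiring care—and the place where I expect any obstacle to sit—is the injectivity of $\kl$ on $\min(I(\ap)_{\sf min})$, i.e.\ that an element there is determined by its class. This is exactly the assertion that $\kl$ restricts to the length-respecting bijection of \cite[Prop.\,4.6]{imrn} between $\min(I(\ap)_{\sf min})$ and the simple roots adjacent to $\ap$, which the class computation above makes explicit and which converts the formula $\eta=w_\ap^{-1}(\ap+\beta)$ into the identification $\beta=\kl(\eta)$. Granting this, both compositions are identities and the bijection follows; the four asserted compatibility properties ($\eta\in\min(I(\ap)_{\sf min})$, $\eta'\in\min(I(\ap')_{\sf min})$, $\kl(\eta)=\ap'$, $\kl(\eta')=\ap$) are then read directly off Propositions~\ref{pr:sosed-prostye} and~\ref{prop:symm-construct}.
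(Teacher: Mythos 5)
Your proposal is correct and follows the same route as the paper, which simply asserts that the constructions of Sections~\ref{subs:v-odnu} and~\ref{subs:v-druguyu} are mutually inverse; you supply the verification the paper leaves implicit. In particular, you correctly identify the one nontrivial point --- that $\kl$ is injective on $\min(I(\ap)_{\sf min})$, via the bijection of \cite[Prop.\,4.6]{imrn} together with the computation~\eqref{eq:class-eta} --- and your use of these facts is sound.
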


\begin{cl}
The number of glorious pairs equals $\#\Pi_l-1$. 
\end{cl}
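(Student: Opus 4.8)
The plan is to deduce the corollary directly from the bijection established in Theorem~\ref{thm:main}, reducing the counting question to a purely combinatorial fact about the Dynkin diagram. By the theorem, glorious pairs are in one-to-one correspondence with pairs of adjacent roots in $\Pi_l$, so it suffices to count the edges of the sub-diagram spanned by the long simple roots, i.e. the number of unordered adjacent pairs $\{\ap,\ap'\}$ with $\ap,\ap'\in\Pi_l$.

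First I would recall the basic graph-theoretic fact that a finite tree on $N$ vertices has exactly $N-1$ edges, and observe that for any irreducible root system the Dynkin diagram is a tree (it is connected and acyclic). Second, and this is the one substantive point to verify, I would note that the full sub-diagram on $\Pi_l$ is itself connected. In each non-simply-laced type there is a unique edge joining a long root to a short root, so removing the short roots from the Dynkin diagram removes a connected "tail" and leaves the long roots forming a connected subtree; this connectedness of $\Pi_l$ is already asserted in the Introduction (just before the statement that $\#\{\text{glorious pairs}\}=\#\Pi_l-1$). In the $\GR{ADE}{}$-case all roots are long, so $\Pi_l=\Pi$ and the claim is immediate. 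Thus $\Pi_l$ always induces a tree on $\#\Pi_l$ vertices, which therefore has exactly $\#\Pi_l-1$ edges.

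Combining these two observations yields the count: the number of pairs of adjacent roots in $\Pi_l$ equals $\#\Pi_l-1$, and by the bijection of Theorem~\ref{thm:main} this is precisely the number of glorious pairs. I expect the proof to be very short, essentially a one-line invocation of the theorem together with the tree-edge count.

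The main (and only real) obstacle is justifying the connectedness of the long-root sub-diagram $\Pi_l$, since without it one could not immediately conclude that the induced subgraph is a tree. This is harmless in practice because the classification of non-simply-laced types shows the long roots always form one connected chain, but to keep the argument uniform rather than case-by-case one should appeal to the remark in the Introduction that $\Pi_l$ is connected in the Dynkin diagram. Once that is granted, the counting is routine and the corollary follows at once.
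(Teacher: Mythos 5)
Your proof is correct and follows exactly the route the paper intends: the corollary is an immediate consequence of the bijection in Theorem~\ref{thm:main} combined with the observation (already made in the Introduction) that $\Pi_l$ spans a connected subtree of the Dynkin diagram and hence has $\#\Pi_l-1$ edges. Nothing is missing.
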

This provides an {\sl a priori\/} proof for Theorem\,4.8(ii) in \cite{mz}, where glorious pairs appear disguised as ``summable 2-antichains'' in $\Delta(1):=\gH\setminus\{\theta\}$.
\begin{cl}     \label{cl:unique}
Every {\bfseries ordered} glorious pair $(\eta,\eta')$ arises as an element of the set 
$\min(I(\ap)_{\sf min})\times \max(\Delta^+\setminus I(\ap)_{\sf max})$ for a unique $\ap\in\Pi_l$.
\end{cl}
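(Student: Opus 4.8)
The plan is to lean on Theorem~\ref{thm:main}, which already provides, for the ordered glorious pair $(\eta,\eta')$, an ordered pair $(\ap,\ap')$ of adjacent long simple roots with $\eta\in\min(I(\ap)_{\sf min})$, $\kl(\eta)=\ap'$ and $\kl(\eta')=\ap$. The corollary then amounts to two things: \emph{existence}, that this same $\ap$ satisfies $\eta'\in\max(\Delta^+\setminus I(\ap)_{\sf max})$, and \emph{uniqueness}, that no other $\tilde\ap\in\Pi_l$ fulfils both membership conditions. In short, I must trade the condition ``$\eta'\in\min(I(\ap')_{\sf min})$'' of the theorem for ``$\eta'\in\max(\Delta^+\setminus I(\ap)_{\sf max})$'', and then pin down $\ap$.

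For existence I would exploit the flip $\iota\colon\gamma\mapsto\theta-\gamma$ on $P:=\gH\setminus\{\theta\}$. By the proof of Lemma~\ref{lm:h*}(2), $P$ splits into $h^*-2$ complementary pairs $\{\gamma,\theta-\gamma\}$ of roots, so $\iota$ is a well-defined involution of $P$, and it reverses $\curge$ since $(\theta-\gamma_2)-(\theta-\gamma_1)=\gamma_1-\gamma_2$. Now $I(\ap)_{\sf min}\subset\gH$ is abelian, contains $\theta$, and has $h^*-1$ elements (Section~\ref{subs:1-4}); hence $I^\circ:=I(\ap)_{\sf min}\setminus\{\theta\}$ has $h^*-2=\frac{1}{2}\#P$ elements and, being abelian, contains no complementary pair, i.e. $I^\circ\cap\iota(I^\circ)=\varnothing$. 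Cardinalities then force $\iota(I^\circ)=P\setminus I^\circ=\gH\setminus I(\ap)_{\sf min}$, and since $\iota$ is an order-reversing bijection it sends $\min(I^\circ)=\min(I(\ap)_{\sf min})$ onto $\max\bigl(\gH\setminus I(\ap)_{\sf min}\bigr)$. Thus $\max(\gH\setminus I(\ap)_{\sf min})=\{\theta-\gamma\mid\gamma\in\min(I(\ap)_{\sf min})\}$, and in particular $\eta'=\theta-\eta$ lies in it. To upgrade this to $\Delta^+$, recall $I(\ap)_{\sf max}\cap\gH=I(\ap)_{\sf min}$ (Section~\ref{subs:1-4}), so $\Delta^+\setminus I(\ap)_{\sf max}=(\gH\setminus I(\ap)_{\sf min})\sqcup B$ with $B\subset\Delta^+\setminus\gH$; as $\gH\in\AD$ is upward closed and $\eta'\in\gH$, no root above $\eta'$ lies outside $\gH$, hence none lies in $B$, and maximality of $\eta'$ in $\gH\setminus I(\ap)_{\sf min}$ upgrades to $\eta'\in\max(\Delta^+\setminus I(\ap)_{\sf max})$. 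Notice this also shows the max-condition is automatic once $\eta\in\min(I(\ap)_{\sf min})$ and $\eta'=\theta-\eta\in\gH$.

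For uniqueness, suppose $\tilde\ap\in\Pi_l$ also has $\eta\in\min(I(\tilde\ap)_{\sf min})$ (and the max-condition, which we have just seen is then automatic). The class $\kl(\eta)=\ap'$ is independent of the chosen ideal, and by \cite[Prop.\,4.6]{imrn} (recalled in Section~\ref{subs:v-druguyu}) together with~\eqref{eq:class-eta} the map $\gamma\mapsto\kl(\gamma)$ is a bijection from $\min(I(\tilde\ap)_{\sf min})$ onto the simple roots adjacent to $\tilde\ap$. Hence $\ap'$ is adjacent to $\tilde\ap$ and $\eta=w_{\tilde\ap}^{-1}(\tilde\ap+\ap')$. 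Since $\ap'$ is long, $(\tilde\ap,\ap')$ is an ordered pair of adjacent long simple roots, and Proposition~\ref{prop:symm-construct} assigns to it the glorious pair $(\eta,\theta-\eta)=(\eta,\eta')$. The injectivity of the bijection of Theorem~\ref{thm:main} now gives $(\tilde\ap,\ap')=(\ap,\ap')$, so $\tilde\ap=\ap$.

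I expect the existence step to be the crux: the essential insight is that $I(\ap)_{\sf min}$ is a ``Lagrangian half'' of the Heisenberg poset $\gH\setminus\{\theta\}$ and that $\gamma\mapsto\theta-\gamma$ is an order-reversing involution there, which identifies $\max(\gH\setminus I(\ap)_{\sf min})$ with $\{\theta-\gamma\mid\gamma\in\min(I(\ap)_{\sf min})\}$. Once this and the upward-closedness of $\gH$ are in hand, the passage to $\Delta^+$ and the uniqueness argument are comparatively formal.
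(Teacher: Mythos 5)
Your proof is correct, but it reaches the key inclusion $\eta'=\theta-\eta\in\max(\Delta^+\setminus I(\ap)_{\sf max})$ by a genuinely different route. The paper's proof is essentially a two-line citation: it invokes \cite[Theorem\,4.7]{jems}, which states precisely that $\gamma\mapsto\theta-\gamma$ carries $\min(I(\ap)_{\sf min})$ into $\max(\Delta^+\setminus I(\ap)_{\sf max})$, and applies it once with $\ap$ and once with $\ap'$; uniqueness is left implicit in the bijection of Theorem~\ref{thm:main}. You instead re-prove the needed instance of that external theorem from scratch: since $\#\gH=2h^*-3$ and $\#I(\ap)_{\sf min}=h^*-1$ with $\theta\in I(\ap)_{\sf min}$, the order-reversing involution $\gamma\mapsto\theta-\gamma$ on $\gH\setminus\{\theta\}$ must carry the abelian half $I(\ap)_{\sf min}\setminus\{\theta\}$ exactly onto its complement, hence $\min\mapsto\max$ inside $\gH$; the passage to $\Delta^+$ via $I(\ap)_{\sf max}\cap\gH=I(\ap)_{\sf min}$ and the upward-closedness of $\gH$ is sound. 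This costs you about a page of argument where the paper spends a citation, but it buys a self-contained proof (within this paper's stated prerequisites) and the useful observation that the max-condition is automatic once $\eta\in\min(I(\ap)_{\sf min})$. Your explicit uniqueness argument --- recovering $\ap'=\kl(\eta)$, identifying $\eta=w_{\tilde\ap}^{-1}(\tilde\ap+\ap')$ via \cite[Prop.\,4.6]{imrn} and Eq.~\eqref{eq:class-eta}, and then appealing to the injectivity in Theorem~\ref{thm:main} --- is also correct and fills in a step the paper does not spell out.
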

\begin{proof}
Given a glorious pair $(\eta,\eta')\in \min(I(\ap)_{\sf min})\times \min(I(\ap')_{\sf min})$, one can apply 
\cite[Theorem\,4.7]{jems} in two different ways. For, one can say that either 
 $\eta'=\theta-\eta\in \max(\Delta^+\setminus I(\ap)_{\sf max})$ or 
 $\eta=\theta-\eta'\in \max(\Delta^+\setminus I(\ap')_{\sf max})$. Therefore,
 \begin{gather*}
 (\eta,\eta')\in  \min(I(\ap)_{\sf min})\times \max(\Delta^+\setminus I(\ap)_{\sf max}) , \\
(\eta',\eta)\in  \min(I(\ap')_{\sf min})\times \max(\Delta^+\setminus I(\ap')_{\sf max}) .   \qedhere
 \end{gather*}   
\end{proof}

\begin{ex}   \label{ex:Dn}
Let $\Delta$ be of type $\GR{D}{n}$, with $\ap_i=\esi_i-\esi_{i+1}$ ($1\le i\le n-1$) and
$\ap_n=\esi_{n-1}+\esi_n$.
Then $\theta=\esi_1+\esi_2$ and the quadruples $(\ap,\ap',\eta,\eta')$ are given in the table.

\begin{center}
\begin{tabular}{ >{$}c<{$} >{$}c<{$} |>{$}c<{$} >{$}c<{$} c }
\ap & \ap' & \eta & \eta' & \\ \hline
\ap_i & \ap_{i+1} & \esi_1-\esi_{i+2} & \esi_2+\esi_{i+2} & ($1{\le} i {\le} n{-}3)$ \\
\ap_{n-2} & \ap_{n-1} & \esi_1+\esi_{n} & \esi_2-\esi_{n} &  \\
\ap_{n-2} & \ap_{n} & \esi_1-\esi_{n} & \esi_2+\esi_{n} &  \\
\hline 
\end{tabular}
\end{center}

\noindent
Using these data, one easily determines $\min(I(\ap)_{\sf min})$ for all $\ap\in\Pi$. 
Given $\ap_i\in\Pi$, we look at all its occurrences in the first two columns an pick
the related roots $\eta$, if $\ap_i=\ap$,  and $\eta'$, if $\ap_i=\ap'$. This yields $\min(I(\ap_i)_{\sf min})$.
For instance, $\min(I(\ap_{n-2})_{\sf min})=\{\esi_2+\esi_{n-1}, \esi_1+\esi_{n},\esi_1-\esi_{n}\}$.
\end{ex}

\begin{ex}   \label{ex:Bn}
Let $\Delta$ be of type $\GR{B}{n}$, with $\ap_i=\esi_i-\esi_{i+1}$ ($1\le i\le n$), where $\esi_{n+1}=0$.
Then $\Pi_l=\{\ap_1,\dots,\ap_{n-1}\}$, $\theta=\esi_1+\esi_2$, and the quadruples $(\ap,\ap',\eta,\eta')$ are
\begin{center}
\begin{tabular}{ >{$}c<{$} >{$}c<{$} |>{$}c<{$} >{$}c<{$} c }
\ap & \ap' & \eta & \eta' & \\ \hline
\ap_i & \ap_{i+1} & \esi_1-\esi_{i+2} & \esi_2+\esi_{i+2} & ($1{\le} i {\le} n{-}2)$ \\
\end{tabular}
\end{center}
\end{ex}
\begin{ex}   \label{ex:E6}
In the following table, we illustrate the bijection of Theorem~\ref{thm:main} and Corollary~\ref{cl:unique} 
with data for $\Delta$ of type $\GR{E}{6}$. The second column contains 10 different roots, which form 
5 glorious pairs. For each root there, one has to pick the unique root such that their sum equals 
$\theta=\left(\text{\begin{E6}{1}{2}{3}{2}{1}{2}\end{E6}}\right)$. The same 10 roots occur in the third 
column, and reading the table along the rows, we meet all 10 ordered glorious pairs.

\begin{center}
\begin{tabular}{ >{$}c<{$}| c|c |}
  &  $\min(I(\ap_i)_{\sf min})$  & $\max(\Delta^+\setminus I(\ap_i)_{\sf max})$ \\ \hline \hline
\ap_1 &  {\begin{E6}{1}{1}{1}{0}{0}{1}\end{E6}}  & {\begin{E6}{0}{1}{2}{2}{1}{1}\end{E6}}  \\
\ap_2 & {\begin{E6}{1}{1}{1}{1}{0}{1}\end{E6}}\ ; {\begin{E6}{0}{1}{2}{2}{1}{1}\end{E6}} 
&  {\begin{E6}{0}{1}{2}{1}{1}{1}\end{E6}}\ ; {\begin{E6}{1}{1}{1}{0}{0}{1}\end{E6}} \\
\ap_3 & {\begin{E6}{0}{1}{2}{1}{1}{1}\end{E6}}\ ; {\begin{E6}{1}{1}{1}{1}{1}{1}\end{E6}}\ ;
{\begin{E6}{1}{1}{2}{1}{0}{1}\end{E6}} &  
{\begin{E6}{1}{1}{1}{1}{0}{1}\end{E6}}\ ; {\begin{E6}{0}{1}{2}{1}{0}{1}\end{E6}}\ ; 
{\begin{E6}{0}{1}{1}{1}{1}{1}\end{E6}} \\
\ap_4 & {\begin{E6}{0}{1}{1}{1}{1}{1}\end{E6}}\ ;  {\begin{E6}{1}{2}{2}{1}{0}{1}\end{E6}} & 
 {\begin{E6}{1}{1}{2}{1}{0}{1}\end{E6}}\ ; {\begin{E6}{0}{0}{1}{1}{1}{1}\end{E6}}  \\
\ap_5 & {\begin{E6}{0}{0}{1}{1}{1}{1}\end{E6}}  & {\begin{E6}{1}{2}{2}{1}{0}{1}\end{E6}}  \\
\ap_6 & {\begin{E6}{0}{1}{2}{1}{0}{1}\end{E6}} & {\begin{E6}{1}{1}{1}{1}{1}{1}\end{E6}} \\
\hline 
\end{tabular}
\end{center}
\end{ex}

\begin{rema}
There are no glorious pairs for $\GR{C}{n}$ and $\GR{G}{2}$, since $\#\Pi_l=1$ in these cases. 
\end{rema}

\subsection{Adjacent glorious pairs}
\label{subs:sosednie-pary}
Two glorious pairs are said to be {\it adjacent\/} if the corresponding edges are {\it incident\/} in the 
Dynkin diagram, i.e., have a common node (in $\Pi_l$). Here we prove that there is a simple transform 
connecting two adjacent glorious pairs. First, we need some notation.

Let $\ap_i,\ap_j,\ap_k$ be consecutive long simple roots in the Dynkin diagram, see figure

\centerline{
\raisebox{-2.5ex}{\begin{tikzpicture}[scale= .7, transform shape]
\draw (0,-0.2) node[below] {$\ap_i$} 
        (1.1,-0.2) node[below] {$\ap_j$} 
        (2.2,-0.2) node[below] {$\ap_k$};
\tikzstyle{every node}=[circle, draw]
\node (a) at (0,0.1) {};
\node (b) at (1.1,0.1){};
\node (c) at (2.2,0.1) {};
\tikzstyle{every node}=[circle]
\node (g) at (-1.1,0.1)  {$\dots$};
\node (h) at (3.3,0.1)  {$\dots$};
\foreach \from/\to in {a/b, b/c, g/a, c/h}  \draw[-] (\from) -- (\to);
\end{tikzpicture}}.
}
It is not forbidden here that one of the nodes is the branching node (if there is any).

\noindent
To handle this situation, we slightly modify our notation.
The glorious pair associated with $(\ap_i,\ap_j)$ is denoted by $(\eta_{ij}, \eta_{ji})$ and likewise for 
$(\ap_j,\ap_k)$. Here $\eta_{ij}\in \min(I(\ap_i)_{\sf min})$, $\kl(\eta_{ij})=\ap_j$, etc. Set $w_i=w_{\ap_i}$, 
the shortest element of $W$ taking $\theta$ to $\ap_i$, and
likewise for the other admissible sets of indices; e.g.  $w_{jk}$ (resp. $w_{ijk}$) is the shortest element taking $\theta$ to $\ap_j+\ap_k$ (resp. $\ap_i+\ap_j+\ap_k$). Then
$\eta_{ij}=w_i^{-1}(\ap_i+\ap_j)$ and $\eta_{ji}=w_j^{-1}(\ap_i+\ap_j)$, cf. Section~\ref{subs:v-druguyu}.

\begin{thm}   \label{thm:sosed-pary}
If\/ $\ap_i,\ap_j,\ap_k$ are consecutive long simple roots, then
\[
    (\eta_{jk},\eta_{kj})=(\eta_{ij}-\gamma, \eta_{ji}+\gamma) ,
\] 
where $\gamma=w_{ijk}^{-1}(\ap_j)\in\Pi_l$ and $(\gamma,\theta)=0$.
\end{thm}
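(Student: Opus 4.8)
The plan is to express every Weyl group element occurring in the statement through the single element $v:=w_{ijk}$, so that all four roots $\eta_{ij},\eta_{ji},\eta_{jk},\eta_{kj}$ become $v^{-1}$ applied to elements of the $A_3$-pattern spanned by $\ap_i,\ap_j,\ap_k$. First I would record the two factorisations
\[
   w_{ij}=s_{\ap_k}v,\qquad w_{jk}=s_{\ap_i}v ,
\]
obtained exactly as in the proof of Proposition~\ref{prop:symm-construct}: since $(\ap_i+\ap_j+\ap_k,\ap_k^\vee)=1$ one has $s_{\ap_k}(\ap_i+\ap_j+\ap_k)=\ap_i+\ap_j$, so $s_{\ap_k}v$ sends $\theta$ to $\ap_i+\ap_j$, and a length count ($\ell(v)=h^*-4$, $\ell(w_{ij})=h^*-3$) together with the minimality of $w_{ij}$ forces the equality; symmetrically for $w_{jk}$. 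Substituting these into $w_i=s_{\ap_j}w_{ij}$, $w_j=s_{\ap_i}w_{ij}=s_{\ap_k}w_{jk}$, $w_k=s_{\ap_j}w_{jk}$, and using that $s_{\ap_i}$ and $s_{\ap_k}$ commute (the nodes $\ap_i,\ap_k$ are non-adjacent), I obtain
\[
   w_i=s_{\ap_j}s_{\ap_k}v,\qquad w_j=s_{\ap_i}s_{\ap_k}v,\qquad w_k=s_{\ap_j}s_{\ap_i}v .
\]

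Next I would compute the four roots by applying the inverses and collapsing the reflections on the $A_3$-pattern (e.g. $s_{\ap_j}(\ap_i+\ap_j)=\ap_i$ and then $s_{\ap_k}(\ap_i)=\ap_i$, so that $\eta_{ij}=v^{-1}s_{\ap_k}s_{\ap_j}(\ap_i+\ap_j)=v^{-1}(\ap_i)$). The same short reductions give
\[
   \eta_{ij}=v^{-1}(\ap_i),\quad \eta_{ji}=v^{-1}(\ap_j+\ap_k),\quad \eta_{jk}=v^{-1}(\ap_i+\ap_j),\quad \eta_{kj}=v^{-1}(\ap_k).
\]
Subtracting, $\eta_{jk}-\eta_{ij}=v^{-1}(\ap_j)=\eta_{ji}-\eta_{kj}$, which is precisely the transform connecting the two adjacent glorious pairs, with $\gamma=w_{ijk}^{-1}(\ap_j)$ read off directly; as a by-product $\eta_{ij}+\eta_{ji}=\eta_{jk}+\eta_{kj}=v^{-1}(\ap_i+\ap_j+\ap_k)=\theta$, reconfirming both pairs are glorious.

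It then remains to verify the three asserted properties of $\gamma=v^{-1}(\ap_j)$. Orthogonality is immediate: $(\gamma,\theta)=(\ap_j,v\theta)=(\ap_j,\ap_i+\ap_j+\ap_k)=0$. Longness is automatic, since $W$ preserves root lengths and $\ap_j$ is long. For positivity I would note that $s_{\ap_j}$ fixes $\sigma:=\ap_i+\ap_j+\ap_k$ (because $(\sigma,\ap_j^\vee)=0$), so $s_{\ap_j}v$ also sends $\theta$ to $\sigma$; as $v=w_{ijk}$ is the unique shortest such element, $\ell(s_{\ap_j}v)=\ell(v)+1$, i.e. $v^{-1}(\ap_j)\in\Delta^+$.

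The genuinely delicate point, and the step I expect to be the main obstacle, is upgrading ``positive long root orthogonal to $\theta$'' to ``simple root''. Here I would invoke that the stabiliser $W_\theta$ of the dominant root $\theta$ is the standard parabolic generated by $\{s_\ap\mid \ap\in\Pi,\ (\ap,\theta)=0\}$, so the subsystem $\Delta\cap\theta^\perp$ has a simple system $\Pi_\theta\subseteq\Pi$. Since $v$ is the minimal-length representative of the coset $vW_\theta=\{w\in W\mid w\theta=\sigma\}$, it maps $(\Delta\cap\theta^\perp)^+$ into $\Delta^+$ and hence restricts to a bijection $(\Delta\cap\theta^\perp)^+\to(\Delta\cap\sigma^\perp)^+$. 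Now $\gamma$ lies in $(\Delta\cap\theta^\perp)^+$ while $v(\gamma)=\ap_j$ is indecomposable in $(\Delta\cap\sigma^\perp)^+$ (it is already simple in $\Delta$); transporting indecomposability back through $v^{-1}$ shows $\gamma$ is simple in $\Delta\cap\theta^\perp$, that is $\gamma\in\Pi_\theta\subseteq\Pi$, and being long, $\gamma\in\Pi_l$. All the care is in the parabolic and minimal-coset bookkeeping; the transform itself is forced by the factorisation of $v$.
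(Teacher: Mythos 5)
Your proposal is correct and its core is exactly the paper's argument: the same factorisations $w_i=s_js_kw_{ijk}$, $w_j=s_is_kw_{ijk}$, $w_k=s_js_iw_{ijk}$, the same reduction of all four roots to $w_{ijk}^{-1}$ applied to $\ap_i$, $\ap_j+\ap_k$, $\ap_i+\ap_j$, $\ap_k$, and the same subtraction yielding $\gamma=w_{ijk}^{-1}(\ap_j)$. The only divergence is at the final step: where the paper simply invokes \cite[Lemma~2.1]{jlt16} to conclude that $\gamma\in\Pi$ and $(\gamma,\theta)=0$, you supply a self-contained proof — positivity via the uniqueness of the shortest element taking $\theta$ to $\ap_i+\ap_j+\ap_k$, and simplicity by identifying $\Delta\cap\theta^\perp$ with the standard parabolic subsystem on $\{\ap\in\Pi\mid(\ap,\theta)=0\}$ (using dominance of $\theta$) and transporting indecomposability through the minimal coset representative $w_{ijk}$ of $w_{ijk}W_\theta$. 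That argument is sound and effectively reproves the cited lemma in the case needed here; it costs a little more parabolic bookkeeping but makes the theorem independent of \cite{jlt16}.
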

\begin{proof}
It is easily seen that  $w_i=s_js_kw_{ijk}, \ w_j=(s_is_k)w_{ijk}, \ w_k=s_js_i w_{ijk}$, cf. the proof of
Proposition~\ref{prop:symm-construct}(1).  Then
\begin{gather*}
\eta_{jk}=w_j^{-1}(\ap_j+\ap_k)=w_{ijk}^{-1}(s_ks_i)(\ap_j+\ap_k)=w_{ijk}^{-1}(\ap_i+\ap_j) ,
\\
\eta_{kj}=w_k^{-1}(\ap_j+\ap_k)=w_{ijk}^{-1}{\cdot}s_i s_j(\ap_j+\ap_k)=w_{ijk}^{-1}(\ap_k) .
\end{gather*}
Similarly,  $\eta_{ij}=w_{ijk}^{-1}(\ap_i)$ and $\eta_{ji}=w_{ijk}^{-1}(\ap_j+\ap_k)$. Therefore
\beq   \label{eq:beta1}
   \eta_{jk}-\eta_{ij}=w_{ijk}^{-1}(\ap_j)=\eta_{ji}-\eta_{kj} .
\eeq
Since $(\ap_i+\ap_j+\ap_k,\ap_j)=0$, it follows from \cite[Lemma\,2.1]{jlt16} that 
$w_{ijk}^{-1}(\ap_j)\in\Pi$ and $(w_{ijk}^{-1}(\ap_j),\theta)=0$.
\end{proof}

We will say that this $\gamma\in\Pi$ is the {\it transition root\/} for two incident edges $(\ap_i,\ap_j)$ and
$(\ap_j,\ap_k)$. If there is a longer chain of long simple roots, say 
\raisebox{-2.5ex}{\begin{tikzpicture}[scale= .7, transform shape]
\draw (0,-0.2) node[below] {$\ap_i$} 
        (1.1,-0.2) node[below] {$\ap_j$} 
        (2.2,-0.2) node[below] {$\ap_k$}
        (3.3,-0.2) node[below]  {$\ap_l$} ;
\tikzstyle{every node}=[circle, draw]
\node (a) at (0,0.1) {};
\node (b) at (1.1,0.1){};
\node (c) at (2.2,0.1) {};
\node (d) at (3.3,0.1) {};
\tikzstyle{every node}=[circle]
\node (g) at (-1.1,0.1)  {$\dots$};
\node (h) at (4.4,0.1)  {$\dots$};
\foreach \from/\to in {g/a, a/b, b/c, c/d, d/h}  \draw[-] (\from) -- (\to);
\end{tikzpicture}}, then one obtains two associated transition roots
$\gamma_j=w_{ijk}^{-1}(\ap_j)$ and $\gamma_k=w_{jkl}^{-1}(\ap_k)$. A useful complement to the previous theorem is 
\begin{prop}     \label{prop:4-roots}
The simple roots $\gamma_j, \gamma_k$ are adjacent in the Dynkin diagram, i.e., $(\gamma_j,\gamma_k)<0$,
and  $\eta_{kl}-\eta_{ij}=\gamma_j+\gamma_k=\eta_{ji}-\eta_{lk} \in\Delta^+$.
\end{prop}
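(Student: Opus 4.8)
The plan is to reduce both transition roots to a single Weyl group element and then read off everything from the $W$-invariance of $(\ ,\ )$. Write $\mu=\ap_i+\ap_j+\ap_k+\ap_l$ and let $w_{ijkl}$ be the shortest element of $W$ carrying $\theta$ to $\mu$; since the four roots form a simply-laced chain, $\mu\in\Delta^+_l$, so $w_{ijkl}$ is well defined. The goal of the first stage is the pair of ``peeling'' identities
\[
w_{ijk}=s_l\,w_{ijkl},\qquad w_{jkl}=s_i\,w_{ijkl}.
\]

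To prove these I would mimic Proposition~\ref{prop:symm-construct}(1) and the opening computation in the proof of Theorem~\ref{thm:sosed-pary}. A direct calculation (using that $\ap_i,\ap_k$ and $\ap_j,\ap_l$ are non-adjacent) gives $s_l(\mu)=\ap_i+\ap_j+\ap_k$ and $s_i(\mu)=\ap_j+\ap_k+\ap_l$, so $s_l w_{ijkl}$ and $s_i w_{ijkl}$ send $\theta$ to $\ap_i+\ap_j+\ap_k$ and $\ap_j+\ap_k+\ap_l$, respectively. Using $\ell(w_\nu)=(\rho,\theta^\vee-\nu^\vee)$ from Section~\ref{subs:1-4}, together with the fact that a chain $\nu$ of $m$ long simple roots satisfies $(\rho,\nu^\vee)=m$ (this extends the value $\ell(w_{\ap+\ap'})=h^*-3$ recorded in the proof of Proposition~\ref{prop:symm-construct}), I obtain $\ell(w_{ijkl})=h^*-5$ and $\ell(w_{ijk})=\ell(w_{jkl})=h^*-4$. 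Hence $\ell(s_l w_{ijkl})\le\ell(w_{ijkl})+1=h^*-4=\ell(w_{ijk})$, and the uniqueness of the shortest element carrying $\theta$ to $\ap_i+\ap_j+\ap_k$ (Section~\ref{subs:1-4}) forces $w_{ijk}=s_l w_{ijkl}$; symmetrically $w_{jkl}=s_i w_{ijkl}$.

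Granting the peeling identities, the proposition follows quickly. Since $s_l(\ap_j)=\ap_j$ and $s_i(\ap_k)=\ap_k$ (non-adjacency again),
\[
\gamma_j=w_{ijk}^{-1}(\ap_j)=w_{ijkl}^{-1}(\ap_j),\qquad
\gamma_k=w_{jkl}^{-1}(\ap_k)=w_{ijkl}^{-1}(\ap_k).
\]
Both $\gamma_j,\gamma_k$ lie in $\Pi$ and are orthogonal to $\theta$ by \cite[Lemma\,2.1]{jlt16}, because $(\mu,\ap_j)=(\mu,\ap_k)=0$. Now $W$-invariance of the form gives $(\gamma_j,\gamma_k)=(\ap_j,\ap_k)<0$, so $\gamma_j,\gamma_k$ are distinct adjacent simple roots and $\gamma_j+\gamma_k=w_{ijkl}^{-1}(\ap_j+\ap_k)\in\Delta^+$. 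The two displayed equalities then come by telescoping the relation $\eta_{jk}-\eta_{ij}=\gamma_j=\eta_{ji}-\eta_{kj}$ of \eqref{eq:beta1} (for the triple $\ap_i,\ap_j,\ap_k$) against its analogue $\eta_{kl}-\eta_{jk}=\gamma_k=\eta_{kj}-\eta_{lk}$ (for $\ap_j,\ap_k,\ap_l$): adding the two chains yields $\eta_{kl}-\eta_{ij}=\gamma_j+\gamma_k=\eta_{ji}-\eta_{lk}$.

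The one delicate point is the length bookkeeping behind the peeling identities—concretely, verifying $(\rho,\nu^\vee)=m$ for a chain of $m$ long simple roots, which pins down $\ell(w_{ijkl})=h^*-5$. Once that is in hand, the remainder is an isometry computation plus the telescoping of relations already established for triples, so I expect no further obstacle.
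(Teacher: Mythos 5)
Your proposal is correct and follows essentially the same route as the paper: the paper's proof likewise passes through $w_{ijk}=s_l\,w_{ijkl}$ and $w_{jkl}=s_i\,w_{ijkl}$ (stated there as "the obvious notation"), deduces $\gamma_j=w_{ijkl}^{-1}(\ap_j)$, $\gamma_k=w_{ijkl}^{-1}(\ap_k)$, applies $W$-invariance of the form, and telescopes Eq.~\eqref{eq:beta1} with its $(jkl)$-analogue. The only difference is that you spell out the length bookkeeping behind the peeling identities, which the paper leaves implicit (and which your computation $(\rho,\nu^\vee)=m$ for a chain of $m$ long simple roots correctly supplies).
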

\begin{proof}
Using the obvious notation, one has $w_{ijk}=s_l w_{ijkl}$ and  $w_{jkl}=s_i w_{ijkl}$. Therefore
$\gamma_j=w_{ijk}^{-1}(\ap_j)=w_{ijkl}^{-1}(\ap_j)$ and $\gamma_k=w_{jkl}^{-1}(\ap_k)=w_{ijkl}^{-1}(\ap_k)$.
Hence $(\gamma_j,\gamma_k)=(\ap_j,\ap_k)<0$. To prove the second relation, use 
Eq.~\eqref{eq:beta1} and its analogue for $(jkl)$.
\end{proof}

\begin{rmk}    \label{rmk:m-roots}
For a chain of $m$ consecutive {\bf long} simple roots, say $\ap_1,\dots,\ap_m$, one similarly obtains 
$m-2$ transition roots $\gamma_j$, $j=2,\dots,m-1$. Here $\gamma_2=w_{123}^{-1}(\ap_2)$, etc. It 
follows from Proposition~\ref{prop:4-roots} that $\gamma_i$ and $\gamma_{i+1}$ are adjacent in the 
Dynkin diagram. Moreover, arguing by induction on $m\ge 4$,
one easily proves that all $\gamma_i$'s are different, i.e., they form a genuine 
chain in $\Pi$. This provides a curious bijection in the $\GR{ADE}{}$-case. Namely, 
Theorem~\ref{thm:sosed-pary} asserts that each pair of incident edges in the Dynkin diagram defines 
a $\gamma\in\Pi_l$ such that $(\gamma,\theta)=0$. For $\GR{D}{n}$ and $\GR{E}{n}$, there is 
$n-1$ pair 
of incident edges in the Dynkin diagram and also $n-1$ simple roots orthogonal to $\theta$.
For $\GR{A}{n}$, the same phenomenon occurs with $n-2$ in place of $n-1$. Thus, one obtains
natural bijections related to the Dynkin diagram:
\[
        \left\{ \!\!\begin{array}{c} \text{triples of consecutive} \\ \text{simple roots}
        \end{array}\!\! \right\} 
        \leftrightarrow
    \{\text{pairs of incident edges}\}\leftrightarrow \{\gamma\in\Pi \mid (\gamma,\theta)=0\} . 
\]
That is, every simple root orthogonal to $\theta$ occurs as the transition root for a unique pair of incident edges. In~\ref{subs:A3}, we provide  this correspondence for $\GR{E}{6}$, $\GR{E}{7}$, $\GR{E}{8}$,
and $\GR{D}{8}$.
\\ \indent
This fails, however, in the non-simply laced case.
\end{rmk}

\section{Glorious pairs related to the interval $[\thi,\tthe]$, tails, and elements of $W$}
\label{sect:interval}

\noindent
Recall that $\thi=\sum_{\ap\in\Pi} \lfloor \{\theta:\ap\}/2\rfloor \ap$ and if $\Delta\ne \GR{A}{n}$, then
$\thi\in \Delta^+$. Outside type $\GR{A}{}$, $\theta$ is a multiple of a fundamental weight and there is a 
unique $\ap_\theta\in\Pi$ such that $(\theta,\ap_\theta)\ne 0$. Here $\{\theta:\ap_\theta\}=2$, hence 
$\{\thi:\ap_\theta\}=1$ and $\thi\in\gH$. Then $\tthe:=\theta-\thi$ also belongs to $\gH$ and 
$\thi\curle\tthe$. Set 
$\mathfrak J=\{\gamma\in\Delta^+\mid \thi\curle\gamma\curle\tthe\}$. By~\cite[Sect.\,5]{airs2}, if 
$\Delta\in \{\GR{D-E}{}\}$, then $\mathfrak J\simeq \mathbb B^3$ (boolean cube, see Figure~\ref{fig:interval});
and if $\Delta\in \{\GR{B-C-F-G}{}\}$, then $\mathfrak J=\{\thi,\tthe\}$.

Write $|M|$ for the sum of elements of a subset $M\subset\Pi$. Recall that $|M|\in\Delta^+$
if and only if $M$ is connected.
Our results below exploit the following fact for $\Delta$ that is not of type $\GR{A}{}$
(see~\cite[Prop.\,4.6]{airs2}):
\beq   \label{eq:bap}
\text{
there is a unique $\bap\in\Pi$ such that $|\Pi_l|+\bap\in \Delta^+$ and then
$w_{|\Pi_l|}^{-1}(\bap)=-\thi$.}
\eeq

Set $\eus O=\{\ap\in\Pi\mid \{\theta:\ap\} \text{ is odd}\}$. The roots in $\eus O$ are also said to be 
{\it odd}. It follows from the definition of $\thi$ that 
\beq     \label{eq:thety}
    \theta-2\thi= \tthe-\thi=|\eus O| . 
\eeq
\subsection{The glorious pairs related to $\mathfrak J$ in the $\{\GR{D-E}{}\}$-case}
In types $\GR{D}{}$ and $\GR{E}{}$, there are exactly three odd roots. They are denoted by 
$\beta_1,\beta_2, \beta_3$. In this case, $\bap\in \Pi$ of Eq.~\eqref{eq:bap} is the branching node in 
the Dynkin diagram and $\Pi\setminus\{\bap\}$ has three connected components, which are called 
{\it tails}. Each tail $\ct_i$ is a chain, and it is also regarded as a subset of  $\Pi$. Then 
$|\ct_i|\in \Delta^+$, $\Pi\setminus\{\bap\}=\sqcup_{i=1}^3 \ct_i$,  and 
\beq   \label{eq:union-xvosty}
     |\Pi|=\sum_{i=1}^3|\ct_i|+\bap.  
\eeq
Set $\widehat\ct_i=\ct_i\cup\{\bap\}$. Then $|\widehat\ct_i|\in\Delta^+$ and $\Pi\setminus \widehat\ct_i$ is 
a disconnected subset of the Dynkin diagram. Therefore, $|\Pi|-|\widehat\ct_i|\not\in \Delta^+$
and hence $(|\Pi|,|\widehat\ct_i|^\vee)\ge 0$. Since $\gN(w_\mu^{-1})=\{\gamma\in\Delta^+\mid (\gamma,\mu^\vee)=-1\}$  for any $\mu\in\Delta^+_l$~\cite[Theorem\,4.1]{imrn},
this means that $|\widehat\ct_i|\not\in \gN(w_{|\Pi|}^{-1})$, 
i.e., $w_{|\Pi|}^{-1}(|\widehat\ct_i|)\in\Delta^+$.
Moreover, the following is true.

\begin{thm}   \label{thm:xvosty}
After a suitable renumbering of the $\beta_i$'s, one has
\[
    w_{|\Pi|}^{-1}(|\widehat\ct_i|)=\beta_i \quad \text{for } \  i=1,2,3 .
\] 
\end{thm}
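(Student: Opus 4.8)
The plan is to show that the three positive roots $\xi_i:=w_{|\Pi|}^{-1}(|\widehat\ct_i|)$ are exactly the three odd roots, after a renumbering. I already know from the discussion preceding the statement that each $\xi_i$ lies in $\Delta^+$ (this is what the computation $(|\Pi|,|\widehat\ct_i|^\vee)\ge 0$ gives), and $\xi_i$ is a root because $w_{|\Pi|}\in W$ and $|\widehat\ct_i|$ is a root. So the whole task reduces to identifying these three positive roots. My strategy is to first compute their \emph{sum}, recognise it as $|\eus O|=\beta_1+\beta_2+\beta_3$, and then run an elementary support argument that forces each $\xi_i$ to be a single $\beta_j$.

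First I would evaluate $\sum_{i=1}^3|\widehat\ct_i|$. Since $|\widehat\ct_i|=|\ct_i|+\bap$, summing and using \eqref{eq:union-xvosty} (which gives $\sum_i|\ct_i|=|\Pi|-\bap$) yields $\sum_{i=1}^3|\widehat\ct_i|=|\Pi|+2\bap$. Applying the isometry $w_{|\Pi|}^{-1}$, and using both $w_{|\Pi|}^{-1}(|\Pi|)=\theta$ and $w_{|\Pi|}^{-1}(\bap)=-\thi$ from \eqref{eq:bap} (here $\Pi_l=\Pi$, so $w_{|\Pi_l|}=w_{|\Pi|}$), I obtain
\[
   \sum_{i=1}^3\xi_i=w_{|\Pi|}^{-1}(|\Pi|+2\bap)=\theta-2\thi .
\]
By \eqref{eq:thety} the right-hand side equals $|\eus O|=\beta_1+\beta_2+\beta_3$, the sum of the three odd, hence simple, roots.

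It then remains to deduce $\{\xi_1,\xi_2,\xi_3\}=\{\beta_1,\beta_2,\beta_3\}$ from the single identity $\xi_1+\xi_2+\xi_3=\beta_1+\beta_2+\beta_3=:S$ together with the positivity of the $\xi_i$. Since $S$ is a sum of three distinct simple roots, its coefficient at each $\beta_j$ is $1$ and at every other simple root is $0$. Comparing coefficients, $\sum_{i}\{\xi_i:\ap\}=\{S:\ap\}$ for each $\ap\in\Pi$, with every summand a non-negative integer. Thus for $\ap\notin\{\beta_1,\beta_2,\beta_3\}$ all three coefficients vanish, while for $\ap=\beta_j$ exactly one $\xi_i$ has coefficient $1$ and the other two have coefficient $0$. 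Consequently the supports $\supp(\xi_i)\subseteq\{\beta_1,\beta_2,\beta_3\}$ are pairwise disjoint, nonempty (each $\xi_i$ is a root), and cover the three-element set $\{\beta_1,\beta_2,\beta_3\}$; since three disjoint nonempty subsets can cover a three-element set only if each is a singleton, each $\xi_i$ equals some $\beta_{\sigma(i)}$ for a permutation $\sigma$. Renumbering the $\beta_i$ by $\sigma$ gives $w_{|\Pi|}^{-1}(|\widehat\ct_i|)=\beta_i$.

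The only genuine input beyond bookkeeping is the positivity $\xi_i\in\Delta^+$ established before the theorem. I expect the apparent obstacle to be identifying each $\xi_i$ individually, but the summation identity combined with the purely combinatorial counting of supports sidesteps this entirely: it needs neither a case-by-case inspection of the $\GR{D-E}{}$ diagrams nor the non-adjacency of the odd roots. (As a consistency check one may verify $(\xi_i,\xi_j)=(|\widehat\ct_i|,|\widehat\ct_j|)=0$ for $i\ne j$, since $\ct_i,\ct_j$ meet only through $\bap$, confirming that the resulting $\beta_i$ are mutually orthogonal.)
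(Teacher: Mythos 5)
Your argument is correct and follows essentially the same route as the paper: both proofs apply the isometry $w_{|\Pi|}^{-1}$ to the identity $\sum_{i=1}^3|\widehat\ct_i|=|\Pi|+2\bap$, use $w_{|\Pi|}^{-1}(\bap)=-\thi$ from \eqref{eq:bap} together with \eqref{eq:thety} to conclude $\sum_i\xi_i=\theta-2\thi=\beta_1+\beta_2+\beta_3$, and then invoke the positivity of the $\xi_i$. The only difference is that you spell out the final coefficient-counting step (three positive roots summing to a sum of three distinct simple roots must be those simple roots), which the paper compresses into ``Hence the assertion!''.
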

\begin{proof}
It follows from Eq.~\eqref{eq:union-xvosty} that
$ |\Pi|=\sum_{i=1}^3|\widehat\ct_i|-2\bap$. By~\eqref{eq:bap},  we also have
$w_{|\Pi|}^{-1}(\bap)=-\thi$. Therefore, 
\[
   \theta=w_{|\Pi|}^{-1}(|\Pi|)=\sum_{i=1}^3 w_{|\Pi|}^{-1}(|\widehat\ct_i|)+2\thi .
\]
Using Eq.~\eqref{eq:thety},  we obtain
$\beta_1+\beta_2+\beta_3=\sum_{i=1}^3 w_{|\Pi|}^{-1}(|\widehat\ct_i|)$, and each summand in the right-hand
side is a positive root. Hence the assertion!
\end{proof}
\begin{cl}    \label{cor:xvosty}
$w_{|\Pi|}^{-1}(|\ct_i|)=\thi+\beta_i$. 
\end{cl}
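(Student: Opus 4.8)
The plan is to deduce this corollary directly from Theorem~\ref{thm:xvosty} together with the relation $|\widehat\ct_i|=|\ct_i|+\bap$, using the linearity of $w_{|\Pi|}^{-1}$ and the single input $w_{|\Pi|}^{-1}(\bap)=-\thi$ from Eq.~\eqref{eq:bap}. Since $\widehat\ct_i=\ct_i\cup\{\bap\}$ as a disjoint union of subsets of $\Pi$, we have $|\widehat\ct_i|=|\ct_i|+\bap$ as a sum of simple roots. Applying $w_{|\Pi|}^{-1}$ and using that it is a linear map on $V$ gives
\[
   w_{|\Pi|}^{-1}(|\widehat\ct_i|)=w_{|\Pi|}^{-1}(|\ct_i|)+w_{|\Pi|}^{-1}(\bap).
\]

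Next I would substitute the two known values. By Theorem~\ref{thm:xvosty}, the left-hand side equals $\beta_i$, and by Eq.~\eqref{eq:bap}, the second summand on the right equals $-\thi$. Rearranging yields
\[
   w_{|\Pi|}^{-1}(|\ct_i|)=\beta_i-w_{|\Pi|}^{-1}(\bap)=\beta_i+\thi,
\]
which is exactly the claimed identity.

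There is essentially no obstacle here: the corollary is a one-line algebraic consequence of the theorem and the defining property of $\bap$. The only point worth checking is that $|\widehat\ct_i|$ genuinely decomposes as $|\ct_i|+\bap$ at the level of the vector space $V$ (not merely as roots), which is immediate because $|M|$ denotes the sum of the elements of $M$ and $\ct_i$ and $\{\bap\}$ are disjoint. I would also note in passing that $|\ct_i|\in\Delta^+$ since each tail $\ct_i$ is connected, so $w_{|\Pi|}^{-1}(|\ct_i|)$ is a well-defined positive root, consistent with $\beta_i+\thi$ being the sum of two elements of $\gH$ lying in $\Delta^+$; but this consistency check is not needed for the proof itself.
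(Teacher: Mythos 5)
Your proposal is correct and is exactly the computation the paper intends: the corollary is stated without proof precisely because it follows from Theorem~\ref{thm:xvosty} via $|\widehat\ct_i|=|\ct_i|+\bap$ and $w_{|\Pi|}^{-1}(\bap)=-\thi$ from Eq.~\eqref{eq:bap}. Nothing is missing.
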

It is assumed below that numbering of the tails and odd roots is matched as prescribed by 
Theorem~\ref{thm:xvosty}. The explicit correspondence $\ct_i\longleftrightarrow\beta_i$ is pointed out in~\ref{subs:A2}.

\begin{thm}     \label{thm:4.3}
Let $\nu_i\in\ct_i$ be the unique root adjacent to $\bap$. Then
$w_{\bap}^{-1}(\nu_i)=-\thi-\beta_i$.
\end{thm}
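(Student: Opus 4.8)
The plan is to relate the single-tail element $w_{\bap}$ to the full element $w_{|\Pi|}$, whose action on the tails is already pinned down by Theorem~\ref{thm:xvosty} and Corollary~\ref{cor:xvosty}. First I would establish a length/factorisation statement analogous to the one used in Proposition~\ref{prop:symm-construct}(1) and in the proof of Theorem~\ref{thm:sosed-pary}: namely that $w_{|\Pi|}$ factors through $w_{\bap}$ via the shortest-reflection-route principle. Concretely, since $\bap$ is obtained from $|\Pi|$ by stripping the three tails, one expects $w_{|\Pi|} = u\, w_{\bap}$ for an explicit $u\in W$ built from reflections associated with the tails, with lengths adding: $\ell(w_{|\Pi|})=\ell(u)+\ell(w_{\bap})$. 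The cleanest way to get the needed identity, though, is probably to avoid a full factorisation of $w_{|\Pi|}$ and instead argue directly about $w_{\bap}^{-1}(\nu_i)$.

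Here is the direct route I would actually carry out. The root $\nu_i$ is the unique element of $\ct_i$ adjacent to $\bap$, so $\bap+\nu_i$ is a root, and more to the point $|\widehat\ct_i|=|\ct_i|+\bap$ while $|\ct_i|$ and $\bap$ are the two ``ends'' meeting at $\nu_i$. The key computation is to evaluate $w_{\bap}^{-1}$ on $\nu_i$ by comparing it with the known values of $w_{|\Pi|}^{-1}$. From Corollary~\ref{cor:xvosty} we have $w_{|\Pi|}^{-1}(|\ct_i|)=\thi+\beta_i$ and from Eq.~\eqref{eq:bap} we have $w_{|\Pi|}^{-1}(\bap)=-\thi$, so that $w_{|\Pi|}^{-1}(|\widehat\ct_i|)=\beta_i$ reproduces Theorem~\ref{thm:xvosty}. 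The plan is to show $w_{\bap}^{-1}(\nu_i)$ and $w_{|\Pi|}^{-1}$ of the appropriate combination differ in a controlled way. Writing $|\ct_i|=\nu_i+(\text{rest of }\ct_i)$ and using that $w_{\bap}$ only moves $\theta$ to $\bap$ (a ``short'' move compared with $w_{|\Pi|}$), I would identify $w_{\bap}^{-1}(\nu_i)$ as $-\thi-\beta_i$ by matching it against $w_{|\Pi|}^{-1}(\bap)-w_{|\Pi|}^{-1}(|\widehat\ct_i|)=-\thi-\beta_i$, using the factorisation $w_{|\Pi|}=u\,w_{\bap}$ together with the fact that $u$ fixes (or predictably permutes) the relevant roots.

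More carefully, I expect the mechanism to be: the reflection route realising $w_{|\Pi|}$ can be arranged to pass through $\bap$, so that $w_{\bap}^{-1}$ is the ``initial segment'' and the remaining steps act via reflections $s_{\gamma}$ for $\gamma$ orthogonal to the part of the diagram one is working on. Because $\nu_i$ sits at the junction, $u^{-1}$ should send $\nu_i$ to something expressible through $\bap$ and $|\widehat\ct_i|$. I would verify that $w_{\bap}^{-1}(\nu_i)+\thi+\beta_i=0$ by checking it is a root difference that both lies in $\Delta$ and has length zero, forcing it to vanish — an argument parallel to the final lines of Theorem~\ref{thm:sosed-pary}, where \cite[Lemma\,2.1]{jlt16} was invoked to identify a $w^{-1}$-image as a simple root orthogonal to $\theta$.

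The main obstacle will be the factorisation/length bookkeeping: making precise how $w_{|\Pi|}$ decomposes relative to $w_{\bap}$ and controlling the action of the complementary factor $u$ on $\nu_i$, $\bap$, and $|\widehat\ct_i|$ simultaneously. If the clean factorisation $w_{|\Pi|}=u\,w_{\bap}$ with additive lengths is harder to pin down than expected, the fallback is a purely computational verification: since $\bap$ determines $\theta$ as a known multiple of a fundamental weight and $w_{\bap}$ has the explicit inversion set $\gN(w_{\bap}^{-1})=\{\gamma\in\Delta^+\mid(\gamma,\bap^\vee)=-1\}$ from \cite[Theorem\,4.1]{imrn}, one can check directly that $\thi+\beta_i\in\gN(w_{\bap}^{-1})$ or not, and compute the image, type by type in $\GR{D-E}{}$ using the tail data recorded in~\ref{subs:A2}. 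The conceptual proof via the route-through-$\bap$ argument is preferable, but the type-by-type check is a guaranteed backstop.
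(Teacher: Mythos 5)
Your overall strategy --- relate $w_{\bap}$ to $w_{|\Pi|}$ by reflections attached to the tails and then read off the answer from Corollary~\ref{cor:xvosty} --- is exactly the paper's, and you have correctly identified the target value $w_{|\Pi|}^{-1}(-|\ct_i|)=-\thi-\beta_i$. But the factorisation you propose goes in the wrong direction, and this is not a cosmetic slip: you write $w_{|\Pi|}=u\,w_{\bap}$ with $\ell(w_{|\Pi|})=\ell(u)+\ell(w_{\bap})$, whereas $\ell(w_{|\Pi|})=(\rho,\theta^\vee-|\Pi|^\vee)=(\rho,\theta^\vee)-n$ is strictly smaller than $\ell(w_{\bap})=(\rho,\theta^\vee)-1$, so no such length-additive factorisation can exist. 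Since $\hot(|\Pi|)=n>1=\hot(\bap)$, it is the route from $\theta$ to $\bap$ that passes through $|\Pi|$, not the other way around. The paper's proof takes $w(\ct_i)$ to be the product of the simple reflections along the chain $\ct_i$ with $s_{\nu_i}$ leftmost and proves $w_{\bap}=w(\ct_1)w(\ct_2)w(\ct_3)\,w_{|\Pi|}$: the right-hand side takes $\theta$ to $|\Pi|-\sum_i|\ct_i|=\bap$ and has length at most $\ell(w_{|\Pi|})+\sum_i\#\ct_i=(\rho,\theta^\vee)-1=(\rho,\theta^\vee-\bap^\vee)$, which forces it to be the shortest element $w_{\bap}$. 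This length count is precisely the bookkeeping you flag as ``the main obstacle'' and then leave unresolved.

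The second missing ingredient is the action of the complementary factor on $\nu_i$. With the correct factorisation one gets $w_{\bap}^{-1}(\nu_i)=w_{|\Pi|}^{-1}\bigl(w(\ct_3)^{-1}w(\ct_2)^{-1}w(\ct_1)^{-1}(\nu_i)\bigr)$, and the whole point is the pair of elementary identities $w(\ct_j)^{-1}(\nu_i)=\nu_i$ for $j\ne i$ and $w(\ct_i)^{-1}(\nu_i)=-|\ct_i|$ (the latter because $s_{\nu_i}$ sits leftmost in $w(\ct_i)$, so $w(\ct_i)^{-1}$ pushes $\nu_i$ down the chain to $-|\ct_i|$); these reduce everything to Corollary~\ref{cor:xvosty}. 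Your proposal never pins down $u$ or computes its action on $\nu_i$, and the closing suggestion to verify $w_{\bap}^{-1}(\nu_i)+\thi+\beta_i=0$ because it ``has length zero'' is not an argument: a difference of roots need not be a root, and nothing forces it to vanish without the computation above. The type-by-type fallback via $\gN(w_{\bap}^{-1})=\{\gamma\in\Delta^+\mid(\gamma,\bap^\vee)=-1\}$ would of course settle the matter, but as written the conceptual proof has a genuine gap at its central step.
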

\begin{proof}
For each tail $\ct_i$, we define the element $w(\ct_i)\in W$ as the ordered product of simple reflections
$s_\ap$, where the $\ap$'s are taken along the chain $\ct_i$ and $s_{\nu_i}$ is the initial factor on the left, see figure: \vskip-1ex
\begin{center}
\begin{figure}[htbp]
\begin{tikzpicture}[scale=0.95, transform shape]
\draw (0,0.2) node[above] {\small $\nu_1$} 
        (1.1,0.2) node[above] {\small $\bap$}
        (2.2,0.2) node[above] {\small $\nu_2$} 
        (1.3,-1.1) node[right]  {\small $\nu_3$} ;
\node  (P) at (-.6,-.4) {$\underbrace{\phantom{territory}}_{\ct_1} $};
\node  (Q) at (2.8,-.4) {$\underbrace{\phantom{territory}}_{\ct_2} $};
\node  (R) at (.5,-1.1) {\scriptsize $\ct_3$:};
\tikzstyle{every node}=[circle, draw]
\node (a) at (0,0) {};
\node (b) at (1.1,0){};
\node (c) at (2.2,0) {};
\node (d) at (1.1,-1.1) {};
\tikzstyle{every node}=[circle]
\node (g) at (-1.1,0)  {$\dots$};
\node (h) at (3.3,0)  {$\dots$};
\foreach \from/\to in {a/b, b/c,  b/d, g/a, c/h}  \draw[-] (\from) -- (\to);
\end{tikzpicture}
\caption{Tails}   \label{fig:tails}   
\end{figure}
\end{center}

\vskip-1ex\noindent
Note that $w(\ct_i)$ and $w(\ct_j)$ commute, $w(\ct_i)^{-1}(\nu_i)=-|\ct_i|$, and
$w(\ct_j)^{-1}(\nu_i)=\nu_i$ if $i\ne j$. We can write
$w_{\bap}=w(\ct_1)\,w(\ct_2)\,w(\ct_3)\,w_{|\Pi|}$. Indeed, $w(\ct_i)|\Pi|=|\Pi|-|\ct_i|$ and hence
the element in the RHS takes $\theta$ to $|\Pi|-\sum_{i=1}^3 |\ct_i|= \bap$. Moreover,
its length is at most $(\rho,\theta^\vee)-1=(\rho,\theta^\vee-\bap^\vee)$. Hence it must be the unique element of minimal length taking $\theta$ to  $\bap$.
Then 
\[
   w_{\bap}^{-1}(\nu_i)=w_{|\Pi|}^{-1} \prod_{j=1}^3w(\ct_j)^{-1}(\nu_i)=w_{|\Pi|}^{-1}w(\ct_i)^{-1}(\nu_i)=
   w_{|\Pi|}^{-1}(-|\ct_i|)=-\thi-\beta_i .  \qedhere
\]
\end{proof}

We assume below that $\{i,j,k\}=\{1,2,3\}$.

\begin{cl}     \label{cor:xvost-&-glorious}
For any $i\in\{1,2,3\}$, 
\begin{itemize}
\item[\sf (i)] \ $w_{\bap}^{-1}(\bap+\nu_i)=\tthe-\beta_i=\thi+\beta_j+\beta_k$; 
\item[\sf (ii)] \ $w_{\nu_i}^{-1}(\bap+\nu_i)=\thi+\beta_i$.
\end{itemize}
\end{cl}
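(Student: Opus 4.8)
The plan is to derive Corollary~\ref{cor:xvost-&-glorious} directly from Theorem~\ref{thm:4.3}, much as Proposition~\ref{prop:symm-construct} was derived, by keeping careful track of the shortest elements $w_{\bap}$ and $w_{\nu_i}$ and how they factor through $w_{|\Pi|}$. The key algebraic input is that $\bap$ and $\nu_i$ are adjacent long simple roots, so $\{\bap,\nu_i\}$ forms a glorious pair whose associated data is governed by Proposition~\ref{prop:symm-construct} and the length identities of Section~\ref{subs:1-4}.

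For part {\sf (ii)}, I would simply invoke the symmetric construction: since $\bap,\nu_i$ are adjacent simple roots, the formula $\eta'=w_{\nu_i}^{-1}(\bap+\nu_i)$ of Proposition~\ref{prop:symm-construct} computes one member of the glorious pair for the edge $(\bap,\nu_i)$. It then remains to identify this root as $\thi+\beta_i$. The cleanest route is to use the factorization $w_{\nu_i}=s_{\bap}w_{\bap+\nu_i}$ (the $\{\bap,\nu_i\}$-analogue of the identity $w_{\ap'}=s_\ap w_{\ap+\ap'}$ established in the proof of Proposition~\ref{prop:symm-construct}(1)), which gives $w_{\nu_i}^{-1}(\bap+\nu_i)=w_{\bap+\nu_i}^{-1}(\bap)$; alternatively, I would relate $w_{\nu_i}$ to $w_{|\Pi|}$ via the tail-reflection factorization of Theorem~\ref{thm:4.3} and apply Corollary~\ref{cor:xvosty} together with $w_{|\Pi|}^{-1}(\bap)=-\thi$. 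Concretely, combining $w_{\nu_i}^{-1}(\bap+\nu_i)=w_{\nu_i}^{-1}(\bap)+w_{\nu_i}^{-1}(\nu_i)$ with the facts that $w_{\nu_i}^{-1}(\nu_i)$ is the ``own'' contribution of the tail and $w_{\nu_i}^{-1}(\bap)$ is computed from $w_{|\Pi|}^{-1}(\bap)=-\thi$ should yield $\thi+\beta_i$.

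For part {\sf (i)}, the natural approach is to expand $w_{\bap}^{-1}(\bap+\nu_i)=w_{\bap}^{-1}(\bap)+w_{\bap}^{-1}(\nu_i)$. The second summand is already $-\thi-\beta_i$ by Theorem~\ref{thm:4.3}. For the first, note that $w_{\bap}$ takes $\theta$ to $\bap$ and is the shortest such element, so $w_{\bap}^{-1}(\bap)=\theta$. Hence $w_{\bap}^{-1}(\bap+\nu_i)=\theta-\thi-\beta_i=\tthe-\beta_i$, using $\tthe=\theta-\thi$. The identity $\tthe-\beta_i=\thi+\beta_j+\beta_k$ then follows from Eq.~\eqref{eq:thety}: since $\tthe-\thi=|\eus O|=\beta_1+\beta_2+\beta_3$, we get $\tthe=\thi+\beta_i+\beta_j+\beta_k$, and subtracting $\beta_i$ gives the claim.

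The main obstacle I anticipate is verifying the factorization $w_{\nu_i}=s_{\bap}w_{\bap+\nu_i}$ cleanly in part {\sf (ii)}, i.e., confirming that $\bap+\nu_i\in\Delta^+_l$ so that the $\{\bap,\nu_i\}$-instance of Proposition~\ref{prop:symm-construct} genuinely applies — this requires knowing both $\bap$ and $\nu_i$ are long, which holds in types $\GR{D}{}$, $\GR{E}{}$ where all roots are long. Once that is in place, the length bookkeeping ($\ell(w_{\bap+\nu_i})=h^*-3$, etc.) and the resulting formula $w_{\bap+\nu_i}^{-1}(\bap)=\thi+\beta_i$ should fall out by matching against Corollary~\ref{cor:xvosty}. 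Everything else is a short linear computation in $V$ using the already-established values $w_{\bap}^{-1}(\nu_i)=-\thi-\beta_i$, $w_{|\Pi|}^{-1}(\bap)=-\thi$, and Eq.~\eqref{eq:thety}.
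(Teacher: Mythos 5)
Your proposal is correct and matches the paper's argument: part {\sf (i)} is proved exactly as you describe, by writing $w_{\bap}^{-1}(\bap+\nu_i)=w_{\bap}^{-1}(\bap)+w_{\bap}^{-1}(\nu_i)=\theta-\thi-\beta_i$ via Theorem~\ref{thm:4.3} and Eq.~\eqref{eq:thety}, and part {\sf (ii)} is deduced from {\sf (i)} together with Proposition~\ref{prop:symm-construct} applied to $\ap=\bap$, $\ap'=\nu_i$. The only remark is that the factorization $w_{\nu_i}=s_{\bap}w_{\bap+\nu_i}$ you worry about is not needed: once Proposition~\ref{prop:symm-construct} tells you the two images sum to $\theta$, part {\sf (ii)} is just $\theta-(\tthe-\beta_i)=\thi+\beta_i$.
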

\begin{proof}
{\sf (i)} Use the equality $w_{\bap}^{-1}(\bap)=\theta$.

{\sf (ii)} This follows from {\sf (i)}  and Proposition~\ref{prop:symm-construct} with $\ap=\bap$ and 
$\ap'=\nu_i$.
\end{proof}

\noindent
It follows that the six roots strictly between $\thi$ and $\tthe$ form the three glorious pairs corresponding 
to the edges of the Dynkin diagram through $\bap$. Namely, 
\\
\centerline{the pair $\{\tthe-\beta_i,\thi+\beta_i\}$ corresponds to the edge $\{\bap,\nu_i\}$.}

\begin{center}
\begin{figure}[htbp]
\begin{tikzpicture}[scale=1.25] 
\draw (.7,0) node {\small $\thi$}; 
\draw (-2.5,1.5) node {\small $\thi+\beta_1$}; 
\draw (-.05,1.1) node {\small $\thi+\beta_2$}; 
\draw (2.4,1.5) node {\small $\thi+\beta_3$}; 

\draw (1, 3.9) node {\small {\color{darkblue}$\beta_1$}}; 
\draw (1.8, 2.25) node {\small {\color{darkblue}$\beta_2$}}; 
\draw (1, .6) node {\small {\color{darkblue}$\beta_3$}}; 

\draw (-2.5,3) node {\small $\tthe-\beta_3$}; 
\draw (-.05,3.35) node {\small $\tthe-\beta_2$}; 
\draw (2.4,3) node {\small $\tthe-\beta_1$}; 
\draw (.7,4.5) node {\small $\tthe$}; 

\tikzstyle{every node}=[circle, draw] 
\node (a) at (0,0) {};
\node[fill=brown!80] (b) at (-1.5,1.5) {};
\node[fill=brown!80] (c) at (0,1.5) {};
\node[fill=brown!80] (d) at (1.5,1.5) {};
\node (e) at (-1.5,3) {};
\node (f) at (0,3) {};
\node (g) at (1.5,3) {};
\node[fill=brown!80] (h) at (0,4.5) {};
\foreach \from/\to in {a/b, a/c, a/d, b/e, b/f, c/e, c/g, d/f, d/g, e/h, f/h, g/h}  \draw[-] (\from) -- (\to);
\end{tikzpicture}
\caption{The interval between $\thi$ and $\tthe$ for $\GR{D}{n}$ and $\GR{E}{n}$}   \label{fig:interval}   
\end{figure}
\end{center}
By~\cite[Appendix]{jems}, the {\it join\/} `$\vee$' of two elements of the poset $(\Delta^+,\curge)$ always 
exists. In \cite[Sect.\,2]{airs2}, we proved that the {\it meet\/} `$\wedge$' of two elements of $(\Delta^+,\curge)$ exists if and only if their supports are not disjoint. More precisely, if $\supp(\eta):
=\{\ap\in\Pi\mid \{\eta:\ap\}\ne 0\}$ and $\supp(\eta)\cap\supp(\eta')\ne \varnothing$, then
\[
     \eta\wedge\eta'=\sum_{\ap\in\Pi} \min\{\{\eta:\ap\},\{\eta':\ap\}\}\ap .
\]
In this case, $\eta\vee\eta'=\sum_{\ap\in\Pi} \max\{\{\eta:\ap\},\{\eta':\ap\}\}\ap$. Hence 
$\eta+\eta'=\eta\wedge\eta'+\eta\vee\eta'$. Therefore, for any glorious pair, we have
$\eta\wedge\eta'+\eta\vee\eta'=\theta$.

\begin{cl}   \label{cor:inf-glorious}
If $\{\eta,\eta'\}$ is the glorious pair corresponding to  $\{\bap,\nu_i\}$, then
$\eta\wedge\eta'=\thi$ and $\eta\vee\eta'=\tthe$. 
\end{cl}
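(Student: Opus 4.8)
The plan is to show that the glorious pair corresponding to the edge $\{\bap,\nu_i\}$ is exactly $\{\tthe-\beta_i,\ \thi+\beta_i\}$, and then to compute its meet and join directly from the coefficient formula. By Corollary~\ref{cor:xvost-&-glorious}, with $\ap=\bap$ and $\ap'=\nu_i$, the two roots of this glorious pair are $w_{\bap}^{-1}(\bap+\nu_i)=\tthe-\beta_i$ and $w_{\nu_i}^{-1}(\bap+\nu_i)=\thi+\beta_i$; these sum to $\theta$ since $(\tthe-\beta_i)+(\thi+\beta_i)=\tthe+\thi=\theta$. So it remains only to identify the meet and join of the pair $\{\tthe-\beta_i,\ \thi+\beta_i\}$.

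First I would record that $\thi$ and $\tthe$ already lie in the interval $\mathfrak J$ as its minimum and maximum, with $\thi\curle\tthe$, and that by the explicit coefficient description of the meet recalled just before the statement, $\eta\wedge\eta'$ and $\eta\vee\eta'$ are the coefficientwise minimum and maximum of $\tthe-\beta_i$ and $\thi+\beta_i$. The key numerical input is Eq.~\eqref{eq:thety}, $\tthe-\thi=|\eus O|=\beta_1+\beta_2+\beta_3$, which lets me write both roots in terms of $\thi$: namely $\thi+\beta_i$ directly, and $\tthe-\beta_i=\thi+\beta_j+\beta_k$ where $\{i,j,k\}=\{1,2,3\}$, using Corollary~\ref{cor:xvost-&-glorious}(i). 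Thus the two roots are $\thi+\beta_i$ and $\thi+\beta_j+\beta_k$, and the whole computation reduces to comparing, coordinate by coordinate, the vectors $\beta_i$ and $\beta_j+\beta_k$ added on top of the common base $\thi$.

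The crux is then to verify that $\beta_i$ and $\beta_j+\beta_k$ have disjoint supports, i.e.\ that no simple root appears in both. Since $\beta_1,\beta_2,\beta_3$ are the three distinct odd simple roots (elements of $\eus O$, hence simple), $\beta_i$ is a single simple root disjoint from $\beta_j$ and $\beta_k$ as a set of simple roots; consequently $\min\{\{\thi+\beta_i:\ap\},\{\thi+\beta_j+\beta_k:\ap\}\}=\{\thi:\ap\}$ for every $\ap\in\Pi$, giving $\eta\wedge\eta'=\thi$, and similarly the coefficientwise maximum equals $\{\thi:\ap\}+\{\beta_i:\ap\}+\{\beta_j:\ap\}+\{\beta_k:\ap\}$, i.e.\ $\thi+\beta_1+\beta_2+\beta_3=\thi+|\eus O|=\tthe$ by Eq.~\eqref{eq:thety}. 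One should also confirm that the meet exists, which by the criterion cited before the statement requires $\supp(\thi+\beta_i)\cap\supp(\thi+\beta_j+\beta_k)\ne\varnothing$; this is immediate because $\supp(\thi)$ is contained in both, and $\thi\ne 0$ outside type $\GR{A}{}$.

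The main obstacle I anticipate is the disjoint-support claim for the odd simple roots: one must be sure that each $\beta_i$ is genuinely a simple root appearing with coefficient $1$ in $\tthe-\thi$ and that the three are pairwise distinct, so that the coefficientwise minimum strips them all away cleanly. This is exactly what Eq.~\eqref{eq:thety} encodes, since $|\eus O|=\beta_1+\beta_2+\beta_3$ forces the $\beta_i$ to be three distinct simple roots each of coefficient one; no further case analysis across types $\GR{D}{}$ and $\GR{E}{}$ should then be needed, and the result follows uniformly.
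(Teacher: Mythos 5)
Your proposal is correct and follows essentially the same route as the paper: identify the pair as $\{\thi+\beta_i,\ \thi+\beta_j+\beta_k\}$ via Corollary~\ref{cor:xvost-&-glorious} and then compute the coefficientwise min and max, using that $\beta_1,\beta_2,\beta_3$ are three distinct (odd) simple roots. The paper's proof is just the two displayed identities $(\thi+\beta_i)\wedge(\thi+\beta_j+\beta_k)=\thi$ and $(\tthe-\beta_j-\beta_k)\vee(\tthe-\beta_i)=\tthe$; you have merely made explicit the disjoint-support observation that justifies them.
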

\begin{proof}
$(\thi+\beta_i)\wedge (\thi+\beta_j+\beta_k)=\thi$ and 
$(\tthe-\beta_j-\beta_k)\vee  (\tthe-\beta_i)=\tthe$.
\end{proof}

\begin{rmk}      \label{rmk:4.6}
For any glorious pair $\{\eta,\eta'\}$, the meet $\eta\wedge\eta'$  exists and belongs to $\gH$ 
whenever $\Delta$ is not of type $\GR{A}{n}$, see \cite[Remark\,2.6]{airs2}. 
Since $\eta,\eta'\in I\langle{\curge} (\eta\wedge\eta')\rangle$, we have 
$\eta\wedge\eta'\in \Delta^+_{\sf nc}$ and hence $\eta\wedge\eta'\curle \thi$.  It is just proved that 
this upper bound is attained for the edges through the branching node $\bap\in\Pi$. Using 
Theorem~\ref{thm:sosed-pary}, Proposition~\ref{prop:4-roots}, and Remark~\ref{rmk:m-roots},
one can prove that, for all other edges, the strict inequality $\eta\wedge\eta'\prec \thi$ holds. More 
precisely, let us say that the edges through $\bap$ are {\it central}. For any other edge $\eus E$ of the 
subdiagram $\Pi_l$, we consider the natural distance, $d(\eus E)$, from $\eus E$ to the closest central
edge. (That is, $d(\eus E)=1$  if $\eus E$ is not central, but is incident to a central 
edge, etc.) Suppose that $\{\eta,\eta'\}$ corresponds to $\eus E$ and 
\[
    \eus E=\eus E_0, \eus E_1,\dots, \eus E_d=\eus C ,
\]
is the unique chain of edges connecting $\eus E$ with the closest central edge $\eus C$. 
\begin{center}
\begin{tikzpicture}[scale= .8]

\node[circle,draw] (1) at (0,2) {};
\node[circle,draw] (2) at (2,2) {};
\node[circle,draw] (3) at (4,2) {};
\node[circle] (4) at (6,2) {};
\draw (6.6,2) node {$\cdots$};
\node[circle] (5) at (7,2) {};
\node[circle,draw] (6) at (9,2) {};
\node[circle,draw] (7) at (11,2) {};
\node[circle,draw] (8) at (13,2) {};
\foreach \from/\to in {1/2, 2/3, 3/4, 5/6, 6/7, 7/8} \draw [-,line width=.7pt] (\from) -- (\to);
\draw[darkblue,line width=.7pt] (2.95,2.1) arc(0:180:.95);
\draw[darkblue,line width=.7pt] (4.95,2.1) arc(0:180:.95);
\draw[darkblue,line width=.7pt] (9.95,2.1) arc(0:180:.95);
\draw[darkblue,line width=.7pt] (11.95,2.1) arc(0:180:.95);

\draw (2,3.3) node {{\color{red}\footnotesize $\gamma_1$}};
\draw (4,3.3) node {{\color{red}\footnotesize $\gamma_2$}};
\draw (9,3.3) node {{\color{red}\footnotesize $\gamma_{d-1}$}};
\draw (11,3.3) node {{\color{red}\footnotesize $\gamma_d$}};
\draw (1,1.5) node {\footnotesize $\eus E_0{=}\eus E$};
\draw (3,1.5) node {\footnotesize $\eus E_1$};
\draw (10,1.5) node {\footnotesize $\eus E_{d-1}$};
\draw (12,1.5) node {\footnotesize $\eus E_d{=}\eus C$};
\draw (13,2.7) node {\footnotesize $\bap$};
\draw (14,2) node {$\cdots$};
\draw (13,1.3) node {$\vdots$};

\end{tikzpicture}
\end{center}
Let $\gamma_i$ 
be the transition root for the incident edges $(\eus E_{i-1},\eus E_i)$, $i=1,\dots,d$, cf. Theorem~\ref{thm:sosed-pary}.
Then $d=d(\eus E)$ and $\eta\wedge\eta'=\thi-\sum_{i=1}^d \gamma_i$,
hence $\eta\vee\eta'=\tthe+\sum_{i=1}^d \gamma_i$. In particular, 
$\hot(\eta\wedge\eta')=\hot(\thi)-d(\eus E)$. 
\end{rmk}

\begin{rmk}      \label{rmk:4.7}
If $\{i,j,k\}=\{1,2,3\}$, then the odd root $\beta_i$ is the transition root between the pair of central edges
$\{\bap,\nu_j\}$ and $\{\bap,\nu_k\}$.
\end{rmk}

All the roots but $\thi$ in {Figure}~\ref{fig:interval} are commutative, hence their classes are well-defined.

\begin{prop}    \label{prop:classes-interval}
We have $\kl(\thi+\beta_i)=\bap$, $\kl(\thi+\beta_j+\beta_k)=\nu_i$, and $\kl(\tthe)=\bap$.
\end{prop}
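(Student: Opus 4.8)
The plan is to compute each class directly from the definition $\kl(\gamma)=-\hat w_I(\delta-\gamma)$, where $I$ is any abelian ideal with $\gamma\in\min(I)$, using the explicit minuscule elements already in hand from Section~\ref{subs:1-4} and the identities established in Theorem~\ref{thm:4.3} and Corollary~\ref{cor:xvost-&-glorious}. The key observation is that Corollary~\ref{cor:inf-glorious} identifies the glorious pair attached to the central edge $\{\bap,\nu_i\}$ as $\{\thi+\beta_i,\ \thi+\beta_j+\beta_k\}=\{\thi+\beta_i,\ \tthe-\beta_i\}$, and Theorem~\ref{thm:main} together with Proposition~\ref{pr:sosed-prostye}(iii) tells us that the classes of the two members of a glorious pair are precisely the two simple roots of the corresponding edge. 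Since $\kl(\thi+\beta_i)=\bap$ would pair with $\kl(\tthe-\beta_i)=\nu_i$, the first two assertions of the proposition are almost immediate once we decide which member gets which label.

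\medskip\noindent
First I would treat $\kl(\thi+\beta_i)$. By Corollary~\ref{cor:xvost-&-glorious}(ii) we have $\thi+\beta_i=w_{\nu_i}^{-1}(\bap+\nu_i)$, so this is exactly the root $\eta$ built in Section~\ref{subs:v-druguyu} from the adjacent pair $(\nu_i,\bap)$ with $w_{\nu_i}$ playing the role of $w_\ap$. Proposition~\ref{prop:symm-construct} then gives directly $\kl(\thi+\beta_i)=\bap$, which is the desired value. Symmetrically, for $\kl(\thi+\beta_j+\beta_k)$ I would use Corollary~\ref{cor:xvost-&-glorious}(i), which shows $\thi+\beta_j+\beta_k=\tthe-\beta_i=w_{\bap}^{-1}(\bap+\nu_i)$; this is the root $\eta'$ associated with the same edge $\{\bap,\nu_i\}$, now with $\bap$ in the role of $\ap$. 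Proposition~\ref{prop:symm-construct} yields $\kl(\tthe-\beta_i)=\nu_i$, i.e.\ $\kl(\thi+\beta_j+\beta_k)=\nu_i$. Thus both of the first two formulas follow cleanly by recognizing the two interval-roots as the explicit members of the glorious pair of the central edge $\{\bap,\nu_i\}$.

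\medskip\noindent
The remaining and genuinely new computation is $\kl(\tthe)=\bap$. Here $\tthe=\theta-\thi$ is the top vertex of the cube $\mathfrak J$ (Figure~\ref{fig:interval}), and it is not the smaller member of any glorious pair, so I cannot simply invoke Proposition~\ref{prop:symm-construct}. Instead I would locate a concrete abelian ideal $I$ with $\tthe\in\min(I)$ and apply $\kl(\tthe)=-\hat w_I(\delta-\tthe)$. The natural candidate is $I(\tthe)_{\sf min}$, whose minuscule element is $w_{\tthe}\,s_{\ap_0}$ by Section~\ref{subs:1-4}; then $\kl(\tthe)=-w_{\tthe}s_{\ap_0}(\delta-\tthe)=-w_{\tthe}(\theta-\tthe)=-w_{\tthe}(\thi)$. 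So the task reduces to showing $w_{\tthe}(\thi)=-\bap$, equivalently $w_{\tthe}^{-1}(\bap)=-\thi$. This is structurally identical to the identity \eqref{eq:bap} satisfied by $w_{|\Pi_l|}$, and I expect $\tthe=|\Pi_l|$ in types $\GR{D}{}$ and $\GR{E}{}$ (since $\tthe-\thi=|\eus O|$ and $\thi$ accounts for the remaining multiplicities); verifying $\tthe=|\Pi_l|$ and hence $w_{\tthe}=w_{|\Pi_l|}$ would let me quote \eqref{eq:bap} outright. The main obstacle is precisely pinning down this identification of $\tthe$ with $|\Pi_l|$ and confirming $w_{\tthe}^{-1}(\bap)=-\thi$ rigorously; once that is in place, the desired equality $\kl(\tthe)=\bap$ is immediate.
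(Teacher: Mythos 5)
Your treatment of the first two equalities is exactly the paper's: recognize $\{\thi+\beta_i,\tthe-\beta_i\}$ as the glorious pair of the central edge $\{\bap,\nu_i\}$ via Corollary~\ref{cor:xvost-&-glorious}, and read off the classes from Proposition~\ref{prop:symm-construct}. That part is fine.

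The computation of $\kl(\tthe)$ is where the proposal breaks down, and the obstacle you flag is not a technicality but a false statement. The identification $\tthe=|\Pi_l|$ holds only when every coefficient $\{\theta:\ap\}$ is $1$ or $2$, i.e.\ in type $\GR{D}{n}$; in type $\GR{E}{6}$ one has $\theta=(1,2,3,2,1,2)$, $\thi=(0,1,1,1,0,1)$, hence $\tthe=(1,1,2,1,1,1)\ne|\Pi|=(1,1,1,1,1,1)$ (and similarly for $\GR{E}{7},\GR{E}{8}$, where $\theta$ has coefficients up to $4$ and $6$). So you cannot quote Eq.~\eqref{eq:bap} for $w_{\tthe}$. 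Worse, the identity you would need, $w_{\tthe}^{-1}(\bap)=-\thi$, actually fails: since $\gN(w_\mu^{-1})=\{\gamma\in\Delta^+\mid(\gamma,\mu^\vee)=-1\}$, the root $w_{\tthe}^{-1}(\bap)$ is negative only if $(\bap,\tthe^\vee)=-1$, whereas in $\GR{E}{6}$ one computes $(\ap_3,\tthe^\vee)=+1$. There is also a prior unjustified step: you take $I=I(\tthe)_{\sf min}$ and assume $\tthe\in\min(I(\tthe)_{\sf min})$, but the rootlet of an ideal need not even belong to that ideal, so the formula $\kl(\tthe)=-w_{\tthe}s_{\ap_0}(\delta-\tthe)$ is not licensed without proving this membership first (and the sign computation above shows it is in fact problematic).

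The paper's argument for $\kl(\tthe)=\bap$ is of a different nature and avoids any explicit Weyl-group computation: in the Hasse diagram of $\Delta^+$ the root $\tthe$ covers the three roots $\tthe-\beta_i$, whose classes are $\nu_1,\nu_2,\nu_3$; by \cite[Lemma~4.6]{jac06} the class of $\tthe$ must then be adjacent in the Dynkin diagram to all three $\nu_i$, and the only simple root with that property is the branching node $\bap$. If you want to salvage a direct computation along your lines, you would need to work with an ideal that provably has $\tthe$ as a minimal element, e.g.\ $I\langle{\curge}\tthe\rangle$, and identify its minuscule element — which is not $w_{\tthe}s_{\ap_0}$.
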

\begin{proof}
Since the roots between $\thi$ and $\tthe$ comprise three glorious pairs, the first two equalities follow from Proposition~\ref{prop:symm-construct} and  Corollary~\ref{cor:xvost-&-glorious}.
\\ \indent
The root $\tthe$ is adjacent to the roots $\tthe-\beta_i$ ($i=1,2,3$) in the Hasse diagram of $\Delta^+$, 
see Figure~\ref{fig:interval}.
Therefore, $\kl(\tthe)$ is adjacent to the $\kl(\tthe-\beta_i)=\nu_i$ ($i=1,2,3$) in the Dynkin diagram,
see~\cite[Lemma\,4.6]{jac06}. Hence the only possibility for $\kl(\tthe)$ is $\bap$.
\end{proof}

{\bf Remark.} The roots of class $\bap$ are coloured in Figure~\ref{fig:interval}.

\subsection{The interval $\mathfrak J$ in the $\{\GR{B-C-F-G}{}\}$-case}  
\label{subs:semi-glor}
The construction in Section~\ref{subs:v-druguyu} associates a glorious pair to any pair of adjacent long 
simple roots. This leaves nothing more to say in the simply laced case. But, in the non-simply laced
cases, there is exactly one pair of adjacent simple roots of different length, and one can construct a 
certain pair of roots as follows.

Let $\ap\in\Pi_l$, $\ap'\in\Pi_s:=\Pi\setminus\Pi_l$, and $(\ap,\ap')<0$. Then $\ap'=\bap$ and $\eta=w_\ap^{-1}(\ap+\ap')\in
\min(I(\ap)_{\sf min})$ is a {\bf short} root in $\gH$.  Hence $\eta\in\Delta^+_{\sf com}$ and 
$(\eta,\theta)=(\ap,\ap+\ap')>0$. But 
$\eta'=\theta-\eta\not\in\Delta^+_{\sf com}$ in view of Lemma~\ref{lm:I-hat}(iii). For this reason, we say 
that the pair $(\eta,\eta')$ is the (unique) {\it semi-glorious pair}. 

\begin{thm}   \label{thm:semi-glori}
Suppose that $\Delta$ is non-simply laced and $(\eta,\eta')$ is the semi-glorious pair associated with
the adjacent $\ap\in\Pi_l$ and $\ap'\in\Pi_s$, as above. Then 
\begin{itemize}
\item[\sf (i)] \  $I\langle{\curge}\eta'\rangle\setminus\{\eta'\}=I(\ap)_{\sf min}$ and $\eta-\eta'\in\Pi$;
\item[\sf (ii)] \  $\kl(\eta)=\ap'$;
\item[\sf (iii)] \ $(\eta,\eta')=(\tthe,\thi)$.
\end{itemize}
\end{thm}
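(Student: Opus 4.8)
The plan is to prove the three parts in the order (i), then (ii), then (iii), since (iii) should follow from the explicit descriptions obtained in (i). I will work throughout in the non-simply laced setting, where $\bap=\ap'$ is the unique short simple root adjacent to the long simple root $\ap$, and where $\theta$ is a multiple of the fundamental weight dual to $\ap_\theta$. The key structural inputs are the description $\eta=w_\ap^{-1}(\ap+\ap')$, the formula $\#I(\ap)_{\sf min}=h^*-1$, and the characterisation of $\thi,\tthe$ together with $\tthe-\thi=|\eus O|$ from Eq.~\eqref{eq:thety}.

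For part \textsf{(i)}, I would first argue that $\eta'=\theta-\eta$ is a long root that covers $\eta$-related data in the following sense: since $\eta\in\min(I(\ap)_{\sf min})$ and $\eta'=\theta-\eta$, the ideal $\tilde I=I\langle{\curge}\eta,\eta'\rangle$ is non-abelian with the summable pair $\eta,\eta'$, and by Lemma~\ref{lm:I-&-gamma} one expects $I(\ap)_{\sf min}$ to sit inside $\tilde I$ as the complement of $\eta'$. Concretely, I would show $I\langle{\curge}\eta'\rangle=I(\ap)_{\sf min}\cup\{\eta'\}$ with $\eta'$ as the unique minimal element, which gives $I\langle{\curge}\eta'\rangle\setminus\{\eta'\}=I(\ap)_{\sf min}$. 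The cleanest route is to use the minuscule element $\hat w=w_\ap s_{\ap_0}$ of $I(\ap)_{\sf min}$ (as in Section~\ref{subs:v-druguyu}) and check that adjoining $\eta'$ corresponds to applying a single affine simple reflection, so that $\hat w(\delta-\eta')\in\HP$; the resulting affine simple root identifies the cover relation and forces $\eta-\eta'\in\Pi$ via the computation $\eta-\eta'=\eta-(\theta-\eta)=2\eta-\theta$, which I would rewrite using $w_\ap^{-1}$ applied to a difference of the $\ap$-string and compare against the known simple roots. I expect the main obstacle to be verifying carefully that $\eta'$ is genuinely the unique minimal generator of $I\langle{\curge}\eta'\rangle$ and that $2\eta-\theta$ lands exactly in $\Pi$ rather than merely in $\Delta^+$.

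For part \textsf{(ii)}, the class $\kl(\eta)$ is computed exactly as in Eq.~\eqref{eq:class-eta}: using the minuscule element $w_\ap s_{\ap_0}$ for $I(\ap)_{\sf min}$ and the fact that $\eta\in\min(I(\ap)_{\sf min})$, one has
\[
  \kl(\eta)=-w_\ap s_{\ap_0}(\delta-\eta)=-w_\ap(\theta-\eta)=-\ap+(\ap+\ap')=\ap' .
\]
This is the same calculation as in Proposition~\ref{prop:symm-construct}, and it goes through verbatim because it never used that $\ap'$ was long; only $\eta=w_\ap^{-1}(\ap+\ap')$ and $w_\ap(\theta)=\ap$ are needed. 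So \textsf{(ii)} should require essentially no new work beyond citing that computation.

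For part \textsf{(iii)}, I would identify $\eta$ with $\tthe$ and $\eta'$ with $\thi$ by pinning down the coefficients $\{\eta:\ap\}$ against $\{\thi:\ap\}$ and $\{\tthe:\ap\}$. The strategy is to use that $\eta'=\theta-\eta$ is the \emph{unique} maximal non-commutative root (by Lemma~\ref{lm:I-hat}(iii), $\eta'\notin\Delta^+_{\sf com}$, and $\thi$ is the unique maximal element of $\Delta^+_{\sf nc}$), so it suffices to show $\eta'$ is maximal in $\Delta^+_{\sf nc}$, forcing $\eta'=\thi$ and hence $\eta=\theta-\thi=\tthe$. Maximality of $\eta'$ would follow from \textsf{(i)}: any root strictly above $\eta'$ lies in $I\langle{\curge}\eta'\rangle\setminus\{\eta'\}=I(\ap)_{\sf min}$, which is abelian and therefore consists of commutative roots, so no non-commutative root exceeds $\eta'$. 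I expect this last step to be the conceptual crux — translating the ideal-theoretic statement of \textsf{(i)} into the poset statement ``$\eta'$ is maximal among non-commutative roots'' — but once the uniqueness of the maximal non-commutative root (cited from \cite[Section\,3]{airs2}) is invoked, the identification $(\eta,\eta')=(\tthe,\thi)$ is immediate.
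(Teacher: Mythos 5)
Your parts {\sf (ii)} and {\sf (iii)} are essentially fine: {\sf (ii)} is verbatim the paper's computation, and your route to {\sf (iii)} --- $\eta'$ is non-commutative, every root strictly above $\eta'$ lies in the abelian ideal $I(\ap)_{\sf min}$ and is therefore commutative, so $\eta'$ is the unique maximal element $\thi$ of $\Delta^+_{\sf nc}$ --- is a valid and arguably slicker alternative to the paper's argument, which instead computes $w_\ap^{-1}(\ap')=-\thi$ directly by factoring $w_\ap=s_{i_s}\cdots s_{i_1}w_{|\Pi_l|}$ through Eq.~\eqref{eq:bap}. But your {\sf (iii)} rests entirely on {\sf (i)}, and your proposed mechanism for {\sf (i)} does not work. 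You want to adjoin $\eta'$ to $I(\ap)_{\sf min}$ by ``applying a single affine simple reflection, so that $\hat w(\delta-\eta')\in\HP$.'' By Proposition~\ref{prop:generators}(2), $\hat w_J(\delta-\eta')\in\HP$ holds \emph{if and only if} $J\cup\{\eta'\}$ is abelian --- and here $I(\ap)_{\sf min}\cup\{\eta'\}$ contains the summable pair $\eta,\eta'$ and is precisely \emph{not} abelian. So the condition you propose to verify is false, and the minuscule-element criterion cannot detect this extension. Likewise, Lemma~\ref{lm:I-&-gamma} only applies once you already know that $I\langle{\curge}\eta'\rangle\setminus\{\eta'\}$ is an abelian ideal contained in $\gH$, which is exactly the point at issue.

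The missing ingredient is the duality from \cite[Theorem\,4.7]{jems}: since $\eta\in\min(I(\ap)_{\sf min})$, one gets $\eta'=\theta-\eta\in\max(\Delta^+\setminus I(\ap)_{\sf max})$, hence everything strictly above $\eta'$ lies in $I(\ap)_{\sf max}$ and $I\langle{\curge}\eta'\rangle\setminus\{\eta'\}$ is abelian. Only then does Lemma~\ref{lm:I-&-gamma} identify it as some $I(\beta)_{\sf min}$, and the containment in $I(\ap)_{\sf max}\cap\gH=I(\ap)_{\sf min}$ forces $\beta=\ap$. Your argument for $\eta-\eta'\in\Pi$ is also not an argument: ``rewrite $2\eta-\theta$ \dots{} and compare against the known simple roots'' proves nothing; what is actually needed is that every minimal element of $I\langle{\curge}\eta'\rangle\setminus\{\eta'\}$ covers the generator $\eta'$ (covering in $\Delta^+$ means differing by a simple root), and $\eta$ is such a minimal element once {\sf (i)} is established. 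As it stands, {\sf (i)} --- and with it {\sf (iii)} --- has a genuine gap.
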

\begin{proof}
{\sf (i)} \  
Since $I\langle{\curge}\eta'\rangle$ is not abelian, there are $\mu,\mu'\curge \eta$ such that $\mu+\mu'=\theta$ (Lemma~\ref{lm:sum-theta}). Because $\eta\in \min(I(\ap)_{\sf min})$, it follows from \cite[Theorem\,4.7]{jems} that 
$\eta'=\theta-\eta\in\max(\Delta^+\setminus I(\ap)_{\sf max})$. Hence 
$I\langle{\curge}\eta'\rangle\setminus\{\eta'\}$ lies in $I(\ap)_{\sf max}$ and thereby is abelian. Therefore, the only possibility is
$\mu=\eta$, $\mu'=\eta'$. In this case, we have $\eta'\prec \eta$.

Furthermore, since $I\langle{\curge}\eta'\rangle\setminus\{\eta'\}$ is abelian, while $I\langle{\curge}\eta'\rangle$ is not, Lemma~\ref{lm:I-&-gamma} implies that the former ideal is $I(\beta)_{\sf min}$ for some
$\beta\in \Pi_l$. By the construction, $I\langle{\curge}\eta'\rangle\setminus\{\eta'\} \subset I(\ap)_{\sf max}$.
Now, using the theory developed in \cite[Sect.\,3]{jems}, we obtain
\[
   I(\ap)_{\sf min}=\bigl( I(\ap)_{\sf max}\cap\gH\bigr) \supset 
   \bigl( I\langle{\curge}\eta'\rangle\setminus\{\eta'\}\bigr)=I(\beta)_{\sf min} ,
\]
which implies that $\beta=\ap$. Thus, $I\langle{\curge}\eta'\rangle\setminus\{\eta'\}=I(\ap)_{\sf min}$ contains
$\eta$ as a minimal element. Since all minimal elements of $I\langle{\curge}\eta'\rangle\setminus\{\eta'\}$ 
cover $\eta'$, we have $\eta-\eta'\in \Pi$.  

{\sf (ii)} \ The computation of Eq.~\eqref{eq:class-eta} applies here as well.

{\sf (iii)} \ Set $\gamma=|\Pi_l|$, the sum of all roots in $\Pi_l$. Then $\gamma\in\Delta^+$ and also
$\gamma+\ap'\in\Delta^+$. That is, $\ap'=\bap$ occurs in Eq.~\eqref{eq:bap}. Therefore,
$w_\gamma^{-1}(\ap')=-\thi$.
Let $\ct$ be the connected component of $\Pi\setminus \{\ap\}$ that contains long roots. Then $\ct$ is 
a chain, and if $\ct$ consists of consecutive (long) simple roots $\ap_{i_1}, \dots, \ap_{i_s}$, where 
$\ap_{i_s}$ is adjacent to $\ap$,  then we consider the element 
$\boldsymbol{w}=s_{i_s}\cdots s_{i_1}w_\gamma\in W$. Since $w_\gamma(\theta)=\gamma=
\ap_{i_1}+ \dots +\ap_{i_s}+\ap$, we have $\boldsymbol{w}(\theta)=\ap$. Because 
$\ell(\boldsymbol{w})\le \ell(w_\gamma)+ \#(\Pi_l)-1=(\rho,\theta^\vee)-1$, we see that 
$\ell(\boldsymbol{w})=(\rho,\theta^\vee)-1$ and $\boldsymbol{w}=w_\ap$.  Since 
$s_{i_j}(\ap')=\ap'$ for each $i_j$, we have
$
   w_\ap^{-1}(\ap')=w_\gamma^{-1}(\ap')=-\thi 
$.
Therefore, $\eta=w_\ap^{-1}(\ap+\ap')=\theta-\thi=\tthe$.
\end{proof}

The complete information on the semi-glorious pairs is presented in the following table:

\begin{center}
\begin{tabular}{ >{$}c<{$}| >{$}c<{$} >{$}c<{$} >{$}c<{$} >{$}c<{$} >{$}c<{$} |}
 & \ap & \ap' & \eta & \eta' & \eta-\eta' \\ \hline \hline 
\GR{B}{n} & \ap_{n-1} & \ap_n & \esi_1 & \esi_2 & \ap_1\in \Pi_l \\
\GR{C}{n} & \ap_{n} & \ap_{n-1} & \esi_1+\esi_n & \esi_1-\esi_n & \ap_n\in \Pi_l \\
\GR{F}{4} & \ap_{3} & \ap_{2} & (1221) & (1211) & \ap_3\in \Pi_l \\
\GR{G}{2} & \ap_{2} & \ap_{1} & (21) & (11) & \ap_1\in \Pi_s \\  \hline
\end{tabular}
\end{center}

\vskip1ex
\noindent
The numbering of simple roots follows \cite[Tables]{t41}, and the notation, say, (1221) stands for
$\ap_1+2\ap_2+2\ap_3+\ap_4$, and so on.

\begin{rmk}   \label{rem:1-odd-coeff}
Our result that $\tthe-\thi\in\Pi$ yields a proof for the fact that $\#\eus O=1$
in the $\GR{BCFG}{}$-case, cf. Eq.~\eqref{eq:thety}. 
\end{rmk}

There is an analogue of Theorem~\ref{thm:sosed-pary} that involves semi-glorious pairs. Let 
$\ap_i,\ap_j,\ap_k$ be consecutive simple roots such that $\ap_i,\ap_j\in\Pi_l$ and $\ap_k\in\Pi_s$, 
cf. Section~\ref{subs:sosednie-pary}. (This only happens for $\GR{B}{n}$ and $\GR{F}{4}$.) Then
we still have the shortest elements $w_i$ and $w_j$, but  
not $w_k$. Recall that $\eta_{ij}=w_i^{-1}(\ap_i+\ap_j)$, $\eta_{ji}=w_j^{-1}(\ap_i+\ap_j)$, and also
$\eta_{jk}=w_j^{-1}(\ap_j+\ap_k)$ is the commutative root in the semi-glorious pair. It follows from Theorem~\ref{thm:semi-glori}({\sf iii}) that
$\eta_{jk}=\tthe$, but this is not needed now.

\begin{thm}     \label{thm:semi-sosed}
 $\eta_{jk}-\eta_{ij}=:\gamma$ is a {\bfseries short} simple root and $(\gamma,\theta)=0$.
\end{thm}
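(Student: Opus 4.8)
The plan is to reduce the statement to the same orthogonality computation that drives the proof of Theorem~\ref{thm:sosed-pary}. First I would record the relations among the relevant shortest elements. Since $\ap_i,\ap_j\in\Pi_l$ are adjacent, Proposition~\ref{prop:symm-construct}(1) gives $w_i=s_{\ap_j}w_{ij}$ and $w_j=s_{\ap_i}w_{ij}$, where $w_{ij}=w_{\ap_i+\ap_j}$ is the shortest element taking $\theta$ to the long root $\ap_i+\ap_j$. Using $s_{\ap_j}(\ap_i+\ap_j)=\ap_i$ (both roots long and adjacent, so $(\ap_i+\ap_j,\ap_j^\vee)=1$) I obtain
\[
  \eta_{ij}=w_i^{-1}(\ap_i+\ap_j)=w_{ij}^{-1}s_{\ap_j}(\ap_i+\ap_j)=w_{ij}^{-1}(\ap_i),
\]
and, since $\ap_i$ is not adjacent to $\ap_k$, the identity $s_{\ap_i}(\ap_j+\ap_k)=\ap_i+\ap_j+\ap_k$ yields
\[
  \eta_{jk}=w_j^{-1}(\ap_j+\ap_k)=w_{ij}^{-1}s_{\ap_i}(\ap_j+\ap_k)=w_{ij}^{-1}(\ap_i+\ap_j+\ap_k).
\]
Subtracting gives $\gamma=\eta_{jk}-\eta_{ij}=w_{ij}^{-1}(\ap_j+\ap_k)$, in exact parallel with the formula $\gamma=w_{ijk}^{-1}(\ap_j)$ of Theorem~\ref{thm:sosed-pary}.

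Next I would verify the key orthogonality $(\ap_i+\ap_j,\ap_j+\ap_k)=0$. With the normalisation $(\ap_i,\ap_i)=(\ap_j,\ap_j)=2$ and $(\ap_k,\ap_k)=1$ (we are in type $\GR{B}{n}$ or $\GR{F}{4}$), one has $(\ap_i,\ap_j)=-1$, $(\ap_j,\ap_k)=-1$, and $(\ap_i,\ap_k)=0$, so that
\[
  (\ap_i+\ap_j,\ap_j+\ap_k)=(\ap_i,\ap_j)+(\ap_j,\ap_j)+(\ap_j,\ap_k)=-1+2-1=0.
\]
Two consequences are then immediate from the fact that $w_{ij}$ is an isometry with $w_{ij}(\theta)=\ap_i+\ap_j$. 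First, $(\gamma,\theta)=(\ap_j+\ap_k,w_{ij}(\theta))=(\ap_j+\ap_k,\ap_i+\ap_j)=0$. Second, $\gamma$ has the same length as $\ap_j+\ap_k$, which is short since $(\ap_j+\ap_k,\ap_j+\ap_k)=2-2+1=1$. Moreover $\gamma$ is positive: as $(\ap_i+\ap_j)^\vee=\ap_i+\ap_j$, the pairing $(\ap_j+\ap_k,(\ap_i+\ap_j)^\vee)=0\ne-1$ shows $\ap_j+\ap_k\notin\gN(w_{ij}^{-1})$ by the inversion-set description $\gN(w_\mu^{-1})=\{\beta\in\Delta^+\mid(\beta,\mu^\vee)=-1\}$. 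Thus $\gamma$ is a short positive root orthogonal to $\theta$.

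It remains to show that $\gamma\in\Pi$, which is the crux of the argument. Exactly as in the proof of Theorem~\ref{thm:sosed-pary} — where the orthogonality $(\ap_i+\ap_j+\ap_k,\ap_j)=0$ forced $w_{ijk}^{-1}(\ap_j)\in\Pi$ — I would apply \cite[Lemma\,2.1]{jlt16} to the long root $\mu=\ap_i+\ap_j$ and the orthogonal root $\ap_j+\ap_k$: since $(\mu,\ap_j+\ap_k)=0$, the lemma delivers $\gamma=w_\mu^{-1}(\ap_j+\ap_k)\in\Pi$ (and re-confirms $(\gamma,\theta)=0$). Combined with the length computation above, $\gamma$ is a short simple root, as claimed. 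The point that I expect to require most care is checking that \cite[Lemma\,2.1]{jlt16} genuinely applies when the orthogonal root $\ap_j+\ap_k$ is \emph{not} simple (in Theorem~\ref{thm:sosed-pary} it was the simple root $\ap_j$). This is essential rather than cosmetic: in type $\GR{B}{n}$ there are non-simple short positive roots orthogonal to $\theta$, namely $\esi_3,\dots,\esi_n$, so shortness and orthogonality to $\theta$ alone do \emph{not} single out a simple root. Should the cited lemma be stated only for simple orthogonal roots, I would instead argue directly that $w_\mu^{-1}(\ap_j+\ap_k)$ is simple, using the inversion-set description together with the minimality $\ell(w_\mu)=(\rho,\theta^\vee-\mu^\vee)$ to rule out any intermediate root, precisely as one verifies in the worked example ($i=n{-}2$, $j=n{-}1$, $k=n$ in $\GR{B}{n}$, where the computation returns $\gamma=\esi_n=\ap_n$).
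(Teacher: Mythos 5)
Your reduction is sound and runs parallel to the paper's: you factor through the long root $\ap_i+\ap_j$ (the paper instead uses $\ap_i+\ap_j+2\ap_k$, but since $s_{\ap_k}$ fixes $\ap_j+\ap_k$ the two expressions $w_{\ap_i+\ap_j}^{-1}(\ap_j+\ap_k)$ and $w_{\ap_i+\ap_j+2\ap_k}^{-1}(\ap_j+\ap_k)$ agree), and your computations of $\eta_{ij}$, $\eta_{jk}$, the orthogonality $(\gamma,\theta)=0$, the shortness, and the positivity of $\gamma$ are all correct.

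The gap is exactly at the point you yourself flag as the crux. Your primary route is to apply \cite[Lemma\,2.1]{jlt16} to the pair $(\mu,\ap_j+\ap_k)$, and the paper states explicitly that this lemma cannot be applied here precisely because $\ap_j+\ap_k$ is not simple; so the step you lean on fails as stated. Your fallback --- ``argue directly \dots using the inversion-set description together with the minimality of $\ell(w_\mu)$ to rule out any intermediate root'' --- is only a plan, not an argument, and the worked example settles $\GR{B}{n}$ but not $\GR{F}{4}$. The paper closes this gap with a dedicated lemma: assuming $\gamma=\gamma_1+\gamma_2$ with $\gamma_1,\gamma_2\in\Delta^+$, one pushes forward by $w_\mu$ to get $\ap_j+\ap_k=w_\mu(\gamma_1)+w_\mu(\gamma_2)$; if both images are positive one of them must be $\ap_j$ or $\ap_k$, which is excluded by the sign data $(\nu,\mu^\vee)=-1 \Leftrightarrow \nu\in\gN(w_\mu^{-1})$; if one image is negative, the same description forces $(\gamma_1,\theta^\vee)=1$, whence $(\gamma_2,\theta)<0$, impossible for a positive root. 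This sign-and-orthogonality analysis (which does go through with your choice $\mu=\ap_i+\ap_j$ as well) is the missing content of your proof; everything else you wrote is fine.
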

\begin{proof}
In this situation, $\|\ap_j\|^2/\|\ap_k\|^2=2$ and therefore $\ap_i+\ap_j+2\ap_k\in \Delta^+_l$. Write
$w_{ij\un{k}}$ for the shortest element of $W$ taking $\theta$ to $\ap_i+\ap_j+2\ap_k$. Then
$w_i=s_j s_k w_{ij\un{k}}$ and $w_j=(s_is_k)w_{ij\un{k}}$. Therefore
\begin{gather*}
  \eta_{jk}=w_{ij\un{k}}^{-1}(s_ks_i)(\ap_j+\ap_k)=w_{ij\un{k}}^{-1}(\ap_i+\ap_j+\ap_k), 
  \\
  \eta_{ij}=w_{ij\un{k}}^{-1}s_k s_j (\ap_i+\ap_j)= w_{ij\un{k}}^{-1}(\ap_i) .
\end{gather*}
Hence $\eta_{jk}-\eta_{ij}=w_{ij\un{k}}^{-1}(\ap_j+\ap_k)=\gamma$ is a short root. Here $(\gamma,\theta)=
(\ap_i+\ap_j+2\ap_k, \ap_j+\ap_k)=0$, but we cannot blindly apply~\cite[Lemma\,2.1]{jlt16}, as in
Theorem~~\ref{thm:sosed-pary}, because $\ap_j+\ap_k$ is not simple. Nevertheless, that proof can be 
adapted to this situation, see below.
\end{proof}

\begin{lm}
Under the above notation,  $\gamma:=w_{ij\un{k}}^{-1}(\ap_j+\ap_k)$ is a simple root.
\end{lm}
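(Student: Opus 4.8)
The plan is to adapt the argument behind \cite[Lemma\,2.1]{jlt16}; the only new feature is that here $\ap_j+\ap_k$ is a (short) root of height $2$ rather than a simple root. Set $w:=w_{ij\un{k}}=w_\mu$, where $\mu=\ap_i+\ap_j+2\ap_k\in\Delta^+_l$, so that $w(\theta)=\mu$ and $\gamma=w^{-1}(\ap_j+\ap_k)$. The main tool will be the description of the inversion set recorded in Section~\ref{sect:interval}, namely $\gN(w_\mu^{-1})=\{\nu\in\Delta^+\mid (\nu,\mu^\vee)=-1\}$ (from \cite[Theorem\,4.1]{imrn}).

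First I would record the relevant inner products. Since $\ap_i,\ap_j$ are long and joined by a single edge, $\ap_j,\ap_k$ are joined by a double edge with $\|\ap_j\|^2=2\|\ap_k\|^2$, and $\ap_i,\ap_k$ are non-adjacent, the subsystem spanned by $\ap_i,\ap_j,\ap_k$ is the same for $\GR{B}{n}$ and $\GR{F}{4}$. A single computation in this rank-$3$ subsystem then gives $(\mu,\mu)=(\theta,\theta)$ (so $\mu^\vee=\mu$) together with
\[
(\ap_j,\mu^\vee)=-1,\qquad (\ap_k,\mu^\vee)=1,\qquad (\ap_j+\ap_k,\mu^\vee)=0 .
\]
By the displayed description of $\gN(w^{-1})$, these values say, respectively, that $w^{-1}(\ap_j)\in -\Delta^+$, that $w^{-1}(\ap_k)\in\Delta^+$, and that $\gamma=w^{-1}(\ap_j+\ap_k)\in\Delta^+$; in particular $\gamma$ is again a positive short root.

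Next I would prove that $\gamma$ is indecomposable. Suppose, for contradiction, that $\gamma=\delta_1+\delta_2$ with $\delta_1,\delta_2\in\Delta^+$. Applying $w$ yields $\ap_j+\ap_k=w(\delta_1)+w(\delta_2)$, and since $\ap_j+\ap_k$ has height $2$ its only decomposition into two positive roots is $\ap_j+\ap_k$ itself. As the sum is positive, at most one of $w(\delta_1),w(\delta_2)$ is negative, giving two cases. If both are positive, then $\{w(\delta_1),w(\delta_2)\}=\{\ap_j,\ap_k\}$, so $w^{-1}(\ap_j)$ equals $\delta_1$ or $\delta_2$ and is therefore positive, contradicting $w^{-1}(\ap_j)\in -\Delta^+$. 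If instead, say, $w(\delta_2)\in -\Delta^+$ (and hence $w(\delta_1)\in\Delta^+$), then $-w(\delta_2)\in\gN(w^{-1})$ gives $(w(\delta_2),\mu^\vee)=1$, whence $(w(\delta_1),\mu^\vee)=(\ap_j+\ap_k,\mu^\vee)-1=-1$; as $w(\delta_1)$ is positive this forces $\delta_1=w^{-1}(w(\delta_1))\in -\Delta^+$, contradicting $\delta_1\in\Delta^+$. Both cases being impossible, $\gamma$ has no nontrivial decomposition and is therefore simple.

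The hard part is the first case above: it is absent from the original lemma, where the transported root is simple and so has no decomposition into positive roots at all, and it is exactly the value $(\ap_j,\mu^\vee)=-1$ that disposes of it. Accordingly, the one point I would check with care is that $(\ap_j,\mu^\vee)=-1$ and $\mu^\vee=\mu$ hold uniformly for $\GR{B}{n}$ and $\GR{F}{4}$, which is automatic since the relevant data depend only on the common rank-$3$ subsystem spanned by $\ap_i,\ap_j,\ap_k$.
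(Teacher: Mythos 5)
Your proof is correct and follows essentially the same route as the paper: both rest on the description $\gN(w_\mu^{-1})=\{\nu\in\Delta^+\mid(\nu,\mu^\vee)=-1\}$, deduce $\gamma\in\Delta^+$ from $(\ap_j+\ap_k,\mu^\vee)=0$, and rule out a decomposition $\gamma=\delta_1+\delta_2$ by the same two cases, with case (both images positive) killed by $(\ap_j,\mu^\vee)=-1$ exactly as in the paper. The only cosmetic difference is in the second case, where the paper transports the inner products to the $\theta$-side and uses $(\theta,\gamma_2)<0$ being impossible, whereas you stay on the $\mu$-side and conclude that $\delta_1$ would have to be negative --- the same inversion-set fact applied in the opposite direction.
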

\begin{proof}
Set $\mu=\ap_i+\ap_j+2\ap_k$.
Since $\eus N(w_{\mu}^{-1})=\{\nu\in\Delta^+\mid (\nu,\mu^\vee)=-1\}$~\cite[Theorem\,4.1(2)]{imrn} 
and $(\mu,\ap_j+\ap_k)=0$, we have $\gamma\in\Delta^+$. Assume that 
$w_{\mu}^{-1}(\ap_j+\ap_k)=\gamma_1+\gamma_2$ is a sum of positive roots. Then 
$\ap_j+\ap_k=w_\mu(\gamma_1)+w_\mu(\gamma_2)$. 
\\ \indent
If both summands in the RHS are positive, then $\ap_j=w_\mu(\gamma_1)$, which is impossible, because $(\ap_j,\mu^\vee)=-1$ and hence $w_\mu^{-1}(\ap_j)$ is negative. Therefore,
without loss of generality, one may assume that $-\nu_1:=w_\mu(\gamma_1)$ is negative. Then 
$\nu_1\in \eus N(w_\mu^{-1})$, hence $(-\nu_1,\mu^\vee)=1$. Consequently,
$(\gamma_1,\theta^\vee)=1$. On the other hand, $0=(\mu,\ap_j+\ap_k)=(\theta,\gamma_1+\gamma_2)$ and therefore $(\theta, \gamma_2)<0$, which is impossible. Thus, $w_\mu^{-1}(\ap_j+\ap_k)$ must be simple. 
\end{proof}

\begin{rmk}   \label{rem:meet-for-BCFG}
If $(\eta,\eta')$ is the unique semi-glorious pair, then $\eta\vee\eta'=\eta=\tthe$; whereas for all 
genuine glorious pairs, one has $\eta\vee\eta' \succ\tthe$. Again, if a glorious pair $(\eta,\eta')$ 
corresponds to the edge $\eus E$ and $d(\eus E)$ is the distance from $\eus E$ to the edge of the semi-glorious pair, then $\eta\vee\eta'=\tthe+\sum_{i=1}^d \gamma_i$ and 
$\hot(\eta\vee\eta')=\hot(\tthe)+d(\eus E)$, cf. Remark~\ref{rmk:4.6}.
\end{rmk}

\section{The minimal non-abelian ideals of $\be$}
\label{sect:applic}

\noindent
As an application of previous theory, we characterise the minimal non-abelian ideals in
$\AD$ and point out the associated canonical elements in $\HW$ (cf.~Section~\ref{subs:1-5}). By 
Lemma~\ref{lm:h*}, if $\eus J\in \AD\setminus\Ab$ and $\#\eus J=h^*$, then
$\eus J$ is minimal non-abelian. The precise assertion is:

\begin{thm}     \label{thm:min-non-ab}
An ideal $\eus J\in\AD$ is minimal non-abelian if and only if either $\min(\eus J)$ is a glorious pair or
$\Delta$ is non-simply laced and $\eus J=I\langle{\curge}\thi\rangle$. For all these cases, 
$\eus J\subset\gH$, $\#\eus J=h^*$, and the unique summable (to $\theta$) pair of roots in $\eus J$ is either a glorious or semi-glorious pair.
\end{thm}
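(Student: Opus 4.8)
The plan is to prove the biconditional together with the three supplementary assertions, handling the ``if'' direction as a short assembly of the structural results already in hand and reserving the substance for the ``only if'' direction. For the ``if'' direction, suppose first that $\min(\eus J)$ is a glorious pair $\{\eta,\eta'\}$; then $\eus J=\tilde I=I\langle{\curge}\eta,\eta'\rangle$, so Proposition~\ref{pr:sosed-prostye}(i) gives $\eus J\subset\gH$ and $\#\eus J=h^*$, while $\eta+\eta'=\theta$ makes $\eus J$ non-abelian. Minimality is immediate from Lemma~\ref{lm:I-hat}(ii): deleting either $\eta$ or $\eta'$ produces an abelian ideal, and any smaller ideal lies inside one of these, hence is abelian too. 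If instead $\Delta$ is non-simply laced and $\eus J=I\langle{\curge}\thi\rangle$, then Theorem~\ref{thm:semi-glori}(i),(iii) give $\eus J\setminus\{\thi\}=I(\ap)_{\sf min}$, abelian of size $h^*-1$, so $\#\eus J=h^*$; since $\thi$ is the unique minimal element and $\thi+\tthe=\theta$ with $\tthe\curge\thi$, the ideal is non-abelian and deleting $\thi$ makes it abelian, i.e.\ $\eus J$ is minimal non-abelian. In both cases the displayed pair is the unique summable pair, as follows from the uniqueness argument below.

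For the converse, let $\eus J$ be minimal non-abelian. First I would show $\eus J\subset\gH$: the summable pair $\gamma_1+\gamma_2=\theta$ supplied by Lemma~\ref{lm:sum-theta} lies in $\gH$ (a root $\gamma$ with $\theta-\gamma\in\Delta^+$ has $(\gamma,\theta)>0$), so the sub-ideal $\eus J\cap\gH$ is already non-abelian, and minimality forces $\eus J=\eus J\cap\gH$. Next, for \emph{any} $\eta\in\min(\eus J)$ the ideal $\eus J\setminus\{\eta\}$ is abelian by minimality, so Lemma~\ref{lm:I-&-gamma} applies with $\gamma=\eta$ and yields $\eus J\setminus\{\eta\}=I(\ap)_{\sf min}$ for some $\ap\in\Pi_l$ and $\#\eus J=h^*$. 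Since $\eus J\setminus\{\eta\}$ is abelian for each minimal $\eta$, every pair of roots of $\eus J$ summing to $\theta$ must contain every minimal element; as such a pair has only two members and at least one exists, this forces $\#\min(\eus J)\le 2$ and isolates a \emph{unique} summable pair $\{a,b\}$ with $\min(\eus J)\subseteq\{a,b\}$.

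The argument then splits on $\#\min(\eus J)$. If $\#\min(\eus J)=2$, then $\min(\eus J)=\{a,b\}$ are two incomparable roots with $a+b=\theta$, so Lemma~\ref{lm:icomparable} makes them commutative, i.e.\ $\min(\eus J)$ is a glorious pair. If $\#\min(\eus J)=1$, say $\min(\eus J)=\{a\}$, then $\eus J=I\langle{\curge}a\rangle$ is non-abelian, so $a$ is non-commutative; hence $\Delta\ne\GR{A}{}$ and $a\curle\thi$. Consequently $\thi,\tthe\in\eus J$ and $\{\thi,\tthe\}$ is a summable pair, which must coincide with $\{a,b\}$; comparing $a\curle b$ with $\thi\curle\tthe$ forces $a=\thi$ and $\eus J=I\langle{\curge}\thi\rangle$, which is the semi-glorious pair $\{\thi,\tthe\}$ by Theorem~\ref{thm:semi-glori}(iii).

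I expect the main obstacle to be the last step, namely excluding the single-generator case in types $\GR{D-E}{}$. Here I would show that for simply-laced $\Delta\ne\GR{A}{}$ the ideal $I\langle{\curge}\thi\rangle$ is \emph{not} minimal non-abelian: by Corollary~\ref{cor:xvost-&-glorious} (see Figure~\ref{fig:interval}) the three glorious pairs $\{\thi+\beta_i,\tthe-\beta_i\}$ attached to the edges through the branching node lie strictly above $\thi$, so each survives in $\eus J\setminus\{\thi\}$ and keeps it non-abelian, contradicting minimality. Hence the single-generator case occurs only in the non-simply laced types, which completes the converse; assembling the two cases also yields the supplementary claims, since throughout we have $\eus J\subset\gH$, $\#\eus J=h^*$, and a unique summable pair which is glorious in the two-generator case and semi-glorious in the single-generator case.
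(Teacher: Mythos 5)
Your proposal is correct and follows essentially the same route as the paper: extract a summable pair via Lemma~\ref{lm:sum-theta}, split into the incomparable (glorious) case and the single-generator case $\eus J=I\langle{\curge}\thi\rangle$, exclude the latter in types $\GR{D-E}{}$ using the three glorious pairs above $\thi$ from Corollary~\ref{cor:xvost-&-glorious}, and settle $\GR{BCFG}{}$ via Theorem~\ref{thm:semi-glori}. The only cosmetic differences are that you organize the case split by $\#\min(\eus J)$ and prove minimality in the ``if'' direction structurally, whereas the paper gets it for free from the bound $\#\eus J\ge h^*$ of Lemma~\ref{lm:h*}.
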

\begin{proof}
If $\eus J$ is non-abelian, then there are $\eta,\eta'\in\eus J$ such that $\eta+\eta'=\theta$. Hence
$I\langle {\curge}\eta,\eta'\rangle\subset \eus J$, and there must be equality if $\eus J$ is minimal.
There are two possibilities now.

{\it\bfseries (a)}  If $\eta,\eta'$ are incomparable, then $\{\eta,\eta'\}$ is a glorious pair and also 
$\#\eus J=h^*$, see Proposition~\ref{pr:sosed-prostye}(i). Here 
$\eus J=I(\ap)_{\sf min}\cup\{\eta'\}=I(\ap')_{\sf min}\cup\{\eta\}$. 

{\it\bfseries (b)}  Suppose that $\eta'\curle \eta$. Then $\eus J=I\langle {\curge}\eta'\rangle$ and also 
$\eta'\in\Delta^+_{\sf nc}$. (This already excludes the $\GR{A}{n}$-case, where all 1-generated ideals 
are abelian.) Then $\eta'\curle \thi$ and the minimality of $\eus J$ implies that $\eta'= \thi$.
However, in the $\GR{DE}{}$-case the non-abelian ideal $I\langle {\curge}\thi\rangle$ is not
minimal, since it properly contains three smaller non-abelian ideals generated by the glorious pairs related to the central edges of the Dynkin diagram, see Corollary~\ref{cor:xvost-&-glorious} and Figure~\ref{fig:interval}. In the $\GR{BCFG}{}$-case, we do obtain
a minimal non-abelian ideal of cardinality $h^*$, as follows from Theorem~\ref{thm:semi-glori}(i).
For, then $I\langle{\curge}\eta'\rangle\setminus\{\eta'\}=I(\ap)_{\sf min}$ and $\#I(\ap)_{\sf min}=h^*-1$.
\end{proof}  

A nice uniform formulation is that the minimal non-abelian ideals are in a one-to-one 
correspondence with the edges of the Dynkin diagram that are incident to the {\bf long} simple roots.

\begin{rmk} 
For a minimal non-abelian ideal $\eus J$, the canonical element $\hat w_{\eus J}\in \HW$ 
(see~\ref{subs:1-5}) can explicitly be described. Note that in both cases below, $\ell(\hat w_{\eus J})=\#\eus J+\#(\eus J^2)=h^*+1$.

{\it\bfseries (a)} \ Suppose that $\eus J=I\langle {\curge}\eta,\eta'\rangle$, where $(\eta,\eta')$ is the glorious pair 
corresponding to adjacent $\ap,\ap'\in \Pi_l$. Then
\[
   \hat w_{\eus J}=s_{\ap'} s_\ap w_{\ap}s_0=s_\ap s_{\ap'} w_{\ap'}s_0.
\]
For, $w_{\ap}s_0$ is the minuscule element for $I(\ap)_{\sf min}$ (Section~\ref{subs:1-4}). Hence 
$\gN(w_{\ap}s_0)=\{\delta-\gamma\mid \gamma\in I(\ap)_{\sf min}\}$. Then 
$\gN(s_\ap w_{\ap}s_0)=\gN(w_{\ap}s_0)\cup\{ (w_{\ap}s_0)^{-1}(\ap)\}  =\gN(w_{\ap}s_0)\cup \{2\delta-\theta\}$, 
and 
\[
  \gN(s_{\ap'}s_\ap w_{\ap}s_0)=\gN(s_\ap w_{\ap}s_0)\cup\{\delta-\eta'\}=
  \{\delta-\gamma\mid \gamma\in \eus J\}\cup \{2\delta-\theta\} .
\]
For the second equality, use the relations
$s_\ap w_{\ap+\ap'}=w_{\ap'}$, $s_{\ap'} w_{\ap+\ap'}=w_\ap$, and $(s_\ap s_{\ap'})^3=1$.

{\it\bfseries (b)} \ If $\eus J=I\langle {\curge}\thi\rangle$, then $\eus J$ can be thought of as the 
ideal related to the semi-glorious pair $(\tthe,\thi)$. If $\ap\in\Pi_l$ and $\ap'\in\Pi_s$ are the related 
adjacent roots, then similarly
\[
   \hat w_{\eus J}=s_{\ap'} s_\ap w_{\ap}s_0 .
\]
Note that here $w_{\ap'}$ does not exist and $(s_\ap s_{\ap'})^3\ne1$.
\end{rmk}

\appendix
\section{Data for the exceptional Lie algebras}
\label{sect:append}
\subsection{} \label{subs:A1}
For $\GR{E}{6}$, $\GR{E}{7}$, and $\GR{E}{8}$, the numbering of the simple roots is 
\\[.9ex] 
\raisebox{-3.1ex}{\begin{tikzpicture}[scale= .55, transform shape]
\tikzstyle{every node}=[circle, draw]
\node (a) at (0,0) {\bf 1};
\node (b) at (1.1,0) {\bf 2};
\node (c) at (2.2,0) {\bf 3};
\node (d) at (3.3,0) {\bf 4};
\node (e) at (4.4,0) {\bf 5};
\node (f) at (2.2,-1.1) {\bf 6};
\foreach \from/\to in {a/b, b/c, c/d, d/e, c/f}  \draw[-] (\from) -- (\to);
\end{tikzpicture}}, \ \ 
\raisebox{-3.1ex}{\begin{tikzpicture}[scale= .55, transform shape]
\tikzstyle{every node}=[circle, draw] 
\node (a) at (0,0) {\bf 1};
\node (b) at (1.1,0) {\bf 2};
\node (c) at (2.2,0) {\bf 3};
\node (d) at (3.3,0) {\bf 4};
\node (e) at (4.4,0) {\bf 5};
\node (f) at (5.5,0) {\bf 6};
\node (g) at (3.3,-1.1) {\bf 7};
\foreach \from/\to in {a/b, b/c, c/d, d/e, e/f, d/g}  \draw[-] (\from) -- (\to);
\end{tikzpicture}}, \ \
and \ \ 
\raisebox{-3.1ex}{\begin{tikzpicture}[scale= .55, transform shape]
\tikzstyle{every node}=[circle, draw] 
\node (h) at (-1.1,0) {\bf 1};
\node (a) at (0,0) {\bf 2};
\node (b) at (1.1,0) {\bf 3};
\node (c) at (2.2,0) {\bf 4};
\node (d) at (3.3,0) {\bf 5};
\node (e) at (4.4,0) {\bf 6};
\node (f) at (5.5,0) {\bf 7};
\node (g) at (3.3,-1.1) {\bf 8};
\foreach \from/\to in {h/a, a/b, b/c, c/d, d/e, e/f, d/g}  \draw[-] (\from) -- (\to);
\end{tikzpicture}},  \  respectively.  \\[1ex]
The glorious pairs $(\eta,\eta')$ associated with adjacent simple roots $(\ap,\ap')$ are given below.
Recall that here $\eta\in\min (I(\ap)_{\sf min})$ and $\kl(\eta)=\ap'$.
\vskip2ex

$\GR{E}{7}$: \quad 
\begin{tabular}{>{$}c<{$}>{$}c<{$} cc}
\ap & \ap' &  $\eta$ & $\eta'$ \\ \hline
\ap_1 & \ap_2 & 
\raisebox{-1.7ex}{\begin{tikzpicture}[scale= .7, transform shape]
\tikzstyle{every node}=[circle]
\node (a) at (0,0) {\bf 1};
\node (b) at (.3,0) {\bf 1};
\node (c) at (.6,0) {\bf 1};
\node (d) at (.9,0) {\bf 1};
\node (e) at (1.2,0) {\bf 1};
\node (f) at (1.5,0) {\bf 1};
\node (g) at (.9,-.5) {\bf 0};
\end{tikzpicture}}  
& 
\raisebox{-1.7ex}{\begin{tikzpicture}[scale= .7, transform shape]
\tikzstyle{every node}=[circle]
\node (a) at (0,0) {\bf 0};
\node (b) at (.3,0) {\bf 1};
\node (c) at (.6,0) {\bf 2};
\node (d) at (.9,0) {\bf 3};
\node (e) at (1.2,0) {\bf 2};
\node (f) at (1.5,0) {\bf 1};
\node (g) at (.9,-.5) {\bf 2};
\end{tikzpicture}}  
 \\    
 \ap_2 & \ap_3 & 
\raisebox{-1.7ex}{\begin{tikzpicture}[scale= .7, transform shape]
\tikzstyle{every node}=[circle]
\node (a) at (0,0) {\bf 1};
\node (b) at (.3,0) {\bf 1};
\node (c) at (.6,0) {\bf 1};
\node (d) at (.9,0) {\bf 1};
\node (e) at (1.2,0) {\bf 1};
\node (f) at (1.5,0) {\bf 1};
\node (g) at (.9,-.5) {\bf 1};
\end{tikzpicture}}  
& 
\raisebox{-1.7ex}{\begin{tikzpicture}[scale= .7, transform shape]
\tikzstyle{every node}=[circle]
\node (a) at (0,0) {\bf 0};
\node (b) at (.3,0) {\bf 1};
\node (c) at (.6,0) {\bf 2};
\node (d) at (.9,0) {\bf 3};
\node (e) at (1.2,0) {\bf 2};
\node (f) at (1.5,0) {\bf 1};
\node (g) at (.9,-.5) {\bf 1};
\end{tikzpicture}}  
 \\  
\ap_3 & \ap_4 & 
\raisebox{-1.7ex}{\begin{tikzpicture}[scale= .7, transform shape]
\tikzstyle{every node}=[circle]
\node (a) at (0,0) {\bf 1};
\node (b) at (.3,0) {\bf 1};
\node (c) at (.6,0) {\bf 1};
\node (d) at (.9,0) {\bf 2};
\node (e) at (1.2,0) {\bf 1};
\node (f) at (1.5,0) {\bf 1};
\node (g) at (.9,-.5) {\bf 1};
\end{tikzpicture}}  
& 
\raisebox{-1.7ex}{\begin{tikzpicture}[scale= .7, transform shape]
\tikzstyle{every node}=[circle]
\node (a) at (0,0) {\bf 0};
\node (b) at (.3,0) {\bf 1};
\node (c) at (.6,0) {\bf 2};
\node (d) at (.9,0) {\bf 2};
\node (e) at (1.2,0) {\bf 2};
\node (f) at (1.5,0) {\bf 1};
\node (g) at (.9,-.5) {\bf 1};
\end{tikzpicture}}  
 \\  
\ap_4 & \ap_5 & 
\raisebox{-1.7ex}{\begin{tikzpicture}[scale= .7, transform shape]
\tikzstyle{every node}=[circle]
\node (a) at (0,0) {\bf 1};
\node (b) at (.3,0) {\bf 1};
\node (c) at (.6,0) {\bf 2};
\node (d) at (.9,0) {\bf 2};
\node (e) at (1.2,0) {\bf 1};
\node (f) at (1.5,0) {\bf 1};
\node (g) at (.9,-.5) {\bf 1};
\end{tikzpicture}}  
& 
\raisebox{-1.7ex}{\begin{tikzpicture}[scale= .7, transform shape]
\tikzstyle{every node}=[circle]
\node (a) at (0,0) {\bf 0};
\node (b) at (.3,0) {\bf 1};
\node (c) at (.6,0) {\bf 1};
\node (d) at (.9,0) {\bf 2};
\node (e) at (1.2,0) {\bf 2};
\node (f) at (1.5,0) {\bf 1};
\node (g) at (.9,-.5) {\bf 1};
\end{tikzpicture}}  
 \\  
\ap_4 & \ap_7 & 
\raisebox{-1.7ex}{\begin{tikzpicture}[scale= .7, transform shape]
\tikzstyle{every node}=[circle]
\node (a) at (0,0) {\bf 1};
\node (b) at (.3,0) {\bf 1};
\node (c) at (.6,0) {\bf 1};
\node (d) at (.9,0) {\bf 2};
\node (e) at (1.2,0) {\bf 2};
\node (f) at (1.5,0) {\bf 1};
\node (g) at (.9,-.5) {\bf 1};
\end{tikzpicture}}  
& 
\raisebox{-1.7ex}{\begin{tikzpicture}[scale= .7, transform shape]
\tikzstyle{every node}=[circle]
\node (a) at (0,0) {\bf 0};
\node (b) at (.3,0) {\bf 1};
\node (c) at (.6,0) {\bf 2};
\node (d) at (.9,0) {\bf 2};
\node (e) at (1.2,0) {\bf 1};
\node (f) at (1.5,0) {\bf 1};
\node (g) at (.9,-.5) {\bf 1};
\end{tikzpicture}}  
 \\   
\ap_5 & \ap_6 & 
\raisebox{-1.7ex}{\begin{tikzpicture}[scale= .7, transform shape]
\tikzstyle{every node}=[circle]
\node (a) at (0,0) {\bf 1};
\node (b) at (.3,0) {\bf 2};
\node (c) at (.6,0) {\bf 2};
\node (d) at (.9,0) {\bf 2};
\node (e) at (1.2,0) {\bf 1};
\node (f) at (1.5,0) {\bf 1};
\node (g) at (.9,-.5) {\bf 1};
\end{tikzpicture}}  
& 
\raisebox{-1.7ex}{\begin{tikzpicture}[scale= .7, transform shape]
\tikzstyle{every node}=[circle]
\node (a) at (0,0) {\bf 0};
\node (b) at (.3,0) {\bf 0};
\node (c) at (.6,0) {\bf 1};
\node (d) at (.9,0) {\bf 2};
\node (e) at (1.2,0) {\bf 2};
\node (f) at (1.5,0) {\bf 1};
\node (g) at (.9,-.5) {\bf 1};
\end{tikzpicture}}  
 \\  \hline
\end{tabular}
%
\qquad
$\GR{E}{8}$: \quad 
\begin{tabular}{>{$}c<{$}>{$}c<{$} cc}
\ap & \ap' &  $\eta$ & $\eta'$ \\ \hline
\ap_1 & \ap_2 & 
\raisebox{-1.7ex}{\begin{tikzpicture}[scale= .7, transform shape]
\tikzstyle{every node}=[circle]
\node (h) at (-.3,0) {\bf 1};
\node (a) at (0,0) {\bf 2};
\node (b) at (.3,0) {\bf 2};
\node (c) at (.6,0) {\bf 2};
\node (d) at (.9,0) {\bf 2};
\node (e) at (1.2,0) {\bf 1};
\node (f) at (1.5,0) {\bf 0};
\node (g) at (.9,-.5) {\bf 1};
\end{tikzpicture}}  
&
\raisebox{-1.7ex}{\begin{tikzpicture}[scale= .7, transform shape]
\tikzstyle{every node}=[circle]
\node (h) at (-.3,0) {\bf 1};
\node (a) at (0,0) {\bf 1};
\node (b) at (.3,0) {\bf 2};
\node (c) at (.6,0) {\bf 3};
\node (d) at (.9,0) {\bf 4};
\node (e) at (1.2,0) {\bf 3};
\node (f) at (1.5,0) {\bf 2};
\node (g) at (.9,-.5) {\bf 2};
\end{tikzpicture}}  
\\  
\ap_2 & \ap_3 & 
\raisebox{-1.7ex}{\begin{tikzpicture}[scale= .7, transform shape]
\tikzstyle{every node}=[circle]
\node (h) at (-.3,0) {\bf 1};
\node (a) at (0,0) {\bf 2};
\node (b) at (.3,0) {\bf 2};
\node (c) at (.6,0) {\bf 2};
\node (d) at (.9,0) {\bf 2};
\node (e) at (1.2,0) {\bf 1};
\node (f) at (1.5,0) {\bf 1};
\node (g) at (.9,-.5) {\bf 1};
\end{tikzpicture}}  
&
\raisebox{-1.7ex}{\begin{tikzpicture}[scale= .7, transform shape]
\tikzstyle{every node}=[circle]
\node (h) at (-.3,0) {\bf 1};
\node (a) at (0,0) {\bf 1};
\node (b) at (.3,0) {\bf 2};
\node (c) at (.6,0) {\bf 3};
\node (d) at (.9,0) {\bf 4};
\node (e) at (1.2,0) {\bf 3};
\node (f) at (1.5,0) {\bf 1};
\node (g) at (.9,-.5) {\bf 2};
\end{tikzpicture}}  
\\
\ap_3 & \ap_4 & 
\raisebox{-1.7ex}{\begin{tikzpicture}[scale= .7, transform shape]
\tikzstyle{every node}=[circle]
\node (h) at (-.3,0) {\bf 1};
\node (a) at (0,0) {\bf 2};
\node (b) at (.3,0) {\bf 2};
\node (c) at (.6,0) {\bf 2};
\node (d) at (.9,0) {\bf 2};
\node (e) at (1.2,0) {\bf 2};
\node (f) at (1.5,0) {\bf 1};
\node (g) at (.9,-.5) {\bf 1};
\end{tikzpicture}}  
&
\raisebox{-1.7ex}{\begin{tikzpicture}[scale= .7, transform shape]
\tikzstyle{every node}=[circle]
\node (h) at (-.3,0) {\bf 1};
\node (a) at (0,0) {\bf 1};
\node (b) at (.3,0) {\bf 2};
\node (c) at (.6,0) {\bf 3};
\node (d) at (.9,0) {\bf 4};
\node (e) at (1.2,0) {\bf 2};
\node (f) at (1.5,0) {\bf 1};
\node (g) at (.9,-.5) {\bf 2};
\end{tikzpicture}}  
\\
\ap_4 & \ap_5 & 
\raisebox{-1.7ex}{\begin{tikzpicture}[scale= .7, transform shape]
\tikzstyle{every node}=[circle]
\node (h) at (-.3,0) {\bf 1};
\node (a) at (0,0) {\bf 2};
\node (b) at (.3,0) {\bf 2};
\node (c) at (.6,0) {\bf 2};
\node (d) at (.9,0) {\bf 3};
\node (e) at (1.2,0) {\bf 2};
\node (f) at (1.5,0) {\bf 1};
\node (g) at (.9,-.5) {\bf 1};
\end{tikzpicture}}  
&
\raisebox{-1.7ex}{\begin{tikzpicture}[scale= .7, transform shape]
\tikzstyle{every node}=[circle]
\node (h) at (-.3,0) {\bf 1};
\node (a) at (0,0) {\bf 1};
\node (b) at (.3,0) {\bf 2};
\node (c) at (.6,0) {\bf 3};
\node (d) at (.9,0) {\bf 3};
\node (e) at (1.2,0) {\bf 2};
\node (f) at (1.5,0) {\bf 1};
\node (g) at (.9,-.5) {\bf 2};
\end{tikzpicture}}  
\\
\ap_5 & \ap_6 & 
\raisebox{-1.7ex}{\begin{tikzpicture}[scale= .7, transform shape]
\tikzstyle{every node}=[circle]
\node (h) at (-.3,0) {\bf 1};
\node (a) at (0,0) {\bf 2};
\node (b) at (.3,0) {\bf 2};
\node (c) at (.6,0) {\bf 3};
\node (d) at (.9,0) {\bf 3};
\node (e) at (1.2,0) {\bf 2};
\node (f) at (1.5,0) {\bf 1};
\node (g) at (.9,-.5) {\bf 1};
\end{tikzpicture}}  
&
\raisebox{-1.7ex}{\begin{tikzpicture}[scale= .7, transform shape]
\tikzstyle{every node}=[circle]
\node (h) at (-.3,0) {\bf 1};
\node (a) at (0,0) {\bf 1};
\node (b) at (.3,0) {\bf 2};
\node (c) at (.6,0) {\bf 2};
\node (d) at (.9,0) {\bf 3};
\node (e) at (1.2,0) {\bf 2};
\node (f) at (1.5,0) {\bf 1};
\node (g) at (.9,-.5) {\bf 2};
\end{tikzpicture}}  
\\
\ap_5 & \ap_8 & 
\raisebox{-1.7ex}{\begin{tikzpicture}[scale= .7, transform shape]
\tikzstyle{every node}=[circle]
\node (h) at (-.3,0) {\bf 1};
\node (a) at (0,0) {\bf 2};
\node (b) at (.3,0) {\bf 2};
\node (c) at (.6,0) {\bf 2};
\node (d) at (.9,0) {\bf 3};
\node (e) at (1.2,0) {\bf 2};
\node (f) at (1.5,0) {\bf 1};
\node (g) at (.9,-.5) {\bf 2};
\end{tikzpicture}}  
&
\raisebox{-1.7ex}{\begin{tikzpicture}[scale= .7, transform shape]
\tikzstyle{every node}=[circle]
\node (h) at (-.3,0) {\bf 1};
\node (a) at (0,0) {\bf 1};
\node (b) at (.3,0) {\bf 2};
\node (c) at (.6,0) {\bf 3};
\node (d) at (.9,0) {\bf 3};
\node (e) at (1.2,0) {\bf 2};
\node (f) at (1.5,0) {\bf 1};
\node (g) at (.9,-.5) {\bf 1};
\end{tikzpicture}}  
\\
\ap_6 & \ap_7 & 
\raisebox{-1.7ex}{\begin{tikzpicture}[scale= .7, transform shape]
\tikzstyle{every node}=[circle]
\node (h) at (-.3,0) {\bf 1};
\node (a) at (0,0) {\bf 2};
\node (b) at (.3,0) {\bf 3};
\node (c) at (.6,0) {\bf 3};
\node (d) at (.9,0) {\bf 3};
\node (e) at (1.2,0) {\bf 2};
\node (f) at (1.5,0) {\bf 1};
\node (g) at (.9,-.5) {\bf 1};
\end{tikzpicture}}  
&
\raisebox{-1.7ex}{\begin{tikzpicture}[scale= .7, transform shape]
\tikzstyle{every node}=[circle]
\node (h) at (-.3,0) {\bf 1};
\node (a) at (0,0) {\bf 1};
\node (b) at (.3,0) {\bf 1};
\node (c) at (.6,0) {\bf 2};
\node (d) at (.9,0) {\bf 3};
\node (e) at (1.2,0) {\bf 2};
\node (f) at (1.5,0) {\bf 1};
\node (g) at (.9,-.5) {\bf 2};
\end{tikzpicture}}  
\\ \hline
\end{tabular}

\vskip2ex     

$\GR{E}{6}$: \quad 
\begin{tabular}{>{$}c<{$}>{$}c<{$} cc}
\ap & \ap' &  $\eta$ & $\eta'$ \\ \hline
\ap_1 & \ap_2 & 
\raisebox{-1.7ex}{\begin{tikzpicture}[scale= .7, transform shape]
\tikzstyle{every node}=[circle]
\node (b) at (.3,0) {\bf 1};
\node (c) at (.6,0) {\bf 1};
\node (d) at (.9,0) {\bf 1};
\node (e) at (1.2,0) {\bf 0};
\node (f) at (1.5,0) {\bf 0};
\node (g) at (.9,-.5) {\bf 1};
\end{tikzpicture}}  
& 
\raisebox{-1.7ex}{\begin{tikzpicture}[scale= .7, transform shape]
\tikzstyle{every node}=[circle]
\node (b) at (.3,0) {\bf 0};
\node (c) at (.6,0) {\bf 1};
\node (d) at (.9,0) {\bf 2};
\node (e) at (1.2,0) {\bf 2};
\node (f) at (1.5,0) {\bf 1};
\node (g) at (.9,-.5) {\bf 1};
\end{tikzpicture}}  
 \\    
 \ap_2 & \ap_3 & 
\raisebox{-1.7ex}{\begin{tikzpicture}[scale= .7, transform shape]
\tikzstyle{every node}=[circle]
\node (b) at (.3,0) {\bf 1};
\node (c) at (.6,0) {\bf 1};
\node (d) at (.9,0) {\bf 1};
\node (e) at (1.2,0) {\bf 1};
\node (f) at (1.5,0) {\bf 0};
\node (g) at (.9,-.5) {\bf 1};
\end{tikzpicture}}  
& 
\raisebox{-1.7ex}{\begin{tikzpicture}[scale= .7, transform shape]
\tikzstyle{every node}=[circle]
\node (b) at (.3,0) {\bf 0};
\node (c) at (.6,0) {\bf 1};
\node (d) at (.9,0) {\bf 2};
\node (e) at (1.2,0) {\bf 1};
\node (f) at (1.5,0) {\bf 1};
\node (g) at (.9,-.5) {\bf 1};
\end{tikzpicture}}  
 \\    
\ap_3 & \ap_4 & 
\raisebox{-1.7ex}{\begin{tikzpicture}[scale= .7, transform shape]
\tikzstyle{every node}=[circle]
\node (b) at (.3,0) {\bf 1};
\node (c) at (.6,0) {\bf 1};
\node (d) at (.9,0) {\bf 2};
\node (e) at (1.2,0) {\bf 1};
\node (f) at (1.5,0) {\bf 0};
\node (g) at (.9,-.5) {\bf 1};
\end{tikzpicture}}  
& 
\raisebox{-1.7ex}{\begin{tikzpicture}[scale= .7, transform shape]
\tikzstyle{every node}=[circle]
\node (b) at (.3,0) {\bf 0};
\node (c) at (.6,0) {\bf 1};
\node (d) at (.9,0) {\bf 1};
\node (e) at (1.2,0) {\bf 1};
\node (f) at (1.5,0) {\bf 1};
\node (g) at (.9,-.5) {\bf 1};
\end{tikzpicture}}  
 \\    
\ap_3 & \ap_6 & 
\raisebox{-1.7ex}{\begin{tikzpicture}[scale= .7, transform shape]
\tikzstyle{every node}=[circle]
\node (b) at (.3,0) {\bf 1};
\node (c) at (.6,0) {\bf 1};
\node (d) at (.9,0) {\bf 1};
\node (e) at (1.2,0) {\bf 1};
\node (f) at (1.5,0) {\bf 1};
\node (g) at (.9,-.5) {\bf 1};
\end{tikzpicture}}  
& 
\raisebox{-1.7ex}{\begin{tikzpicture}[scale= .7, transform shape]
\tikzstyle{every node}=[circle]
\node (b) at (.3,0) {\bf 0};
\node (c) at (.6,0) {\bf 1};
\node (d) at (.9,0) {\bf 2};
\node (e) at (1.2,0) {\bf 1};
\node (f) at (1.5,0) {\bf 0};
\node (g) at (.9,-.5) {\bf 1};
\end{tikzpicture}}  
 \\    
\ap_4 & \ap_5 & 
\raisebox{-1.7ex}{\begin{tikzpicture}[scale= .7, transform shape]
\tikzstyle{every node}=[circle]
\node (b) at (.3,0) {\bf 1};
\node (c) at (.6,0) {\bf 2};
\node (d) at (.9,0) {\bf 2};
\node (e) at (1.2,0) {\bf 1};
\node (f) at (1.5,0) {\bf 0};
\node (g) at (.9,-.5) {\bf 1};
\end{tikzpicture}}  
& 
\raisebox{-1.7ex}{\begin{tikzpicture}[scale= .7, transform shape]
\tikzstyle{every node}=[circle]
\node (b) at (.3,0) {\bf 0};
\node (c) at (.6,0) {\bf 0};
\node (d) at (.9,0) {\bf 1};
\node (e) at (1.2,0) {\bf 1};
\node (f) at (1.5,0) {\bf 1};
\node (g) at (.9,-.5) {\bf 1};
\end{tikzpicture}}  
 \\    \hline
\end{tabular}
\qquad \quad $\GR{F}{4}$: \quad 
\begin{tabular}{>{$}c<{$}>{$}c<{$} cc}
\ap & \ap' &  $\eta$ & $\eta'$ \\ \hline
\ap_3 & \ap_4 & {\small 2211} & {\small 0221} \\
\end{tabular}

\subsection{} \label{subs:A2}
For $\GR{D}{n}$ and $\GR{E}{n}$, the explicit correspondence between tails and odd
roots is presented below. The numbering of tails follows Figure~\ref{fig:tails}. That is, according to the chosen numbering of $\Pi$,  see Example~\ref{ex:Dn} and figures in \ref{subs:A1}, $\ct_1$ contains $\ap_1$, $\ct_2$ contains $\ap_{n-1}$, and
$\ct_3$ contains $\ap_n$. In particular, we always have
$\#\ct_3=1$.
Then the odd root $\beta_i$ corresponding to $\ct_i$ is:

\begin{center}
\begin{tabular}{>{$}c<{$}| >{$}c<{$} >{$}c<{$} >{$}c<{$}| >{$}c<{$} |}
 & \beta_1 &\beta_2 & \beta_3 & \bap \\ \hline \hline
\GR{D}{n} & \ap_1 & \ap_{n-1} & \ap_n & \ap_{n-2} \\ 
\GR{E}{6} & \ap_1 & \ap_{5} & \ap_3 & \ap_{3} \\ 
\GR{E}{7} & \ap_1 & \ap_{5} & \ap_3 & \ap_{4} \\ 
\GR{E}{7} & \ap_2 & \ap_{8} & \ap_4 & \ap_{5} \\ \hline
\end{tabular}
\end{center}
\subsection{} \label{subs:A3}  Here we provide the transition roots for the pairs of incident edges  
in types $\GR{E}{6}$, $\GR{E}{7}$, and $\GR{E}{8}$, and thereby explicitly present the bijection of Remark~\ref{rmk:m-roots}.
\begin{center}
\begin{tikzpicture}[scale= .8]

\node (G) at (-2,1.5)  {$\GR{E}{6}$:};
\node[circle,draw] (2) at (2,2) {\tiny {\bf 1}};
\node[circle,draw] (3) at (4,2) {\tiny {\bf 2}};
\node[circle,draw] (4) at (6,2) {\tiny {\bf 3}};
\node[circle,draw] (5) at (8,2) {\tiny {\bf 4}};
\node[circle,draw] (6) at (10,2) {\tiny {\bf 5}};
\node[circle,draw] (7) at (6,0) {\tiny {\bf 6}};
\foreach \from/\to in {2/3, 3/4, 4/5, 5/6, 4/7} \draw [-,line width=.7pt] (\from) -- (\to);
\draw[darkblue,line width=.7pt] (4.95,2.1) arc(0:180:.95);
\draw[darkblue,line width=.7pt] (6.95,2.1) arc(0:180:.95);
\draw[darkblue,line width=.7pt] (8.95,2.1) arc(0:180:.95);
\draw[darkblue,line width=.7pt] (6.04,.95) arc(-90:0:.95);
\draw[darkblue,line width=.7pt] (5.04,1.9) arc(180:268:.95);
\draw (4,3.3) node {{\color{red}\footnotesize $\ap_4$}};
\draw (6,3.3) node {{\color{red}\footnotesize $\ap_3$}};
\draw (8,3.3) node {{\color{red}\footnotesize $\ap_2$}};
\draw (5,1.1) node {{\color{red}\footnotesize $\ap_5$}};
\draw (7,1.1) node {{\color{red}\footnotesize $\ap_1$}};

\end{tikzpicture}
\end{center}

\vspace{.4cm}

\begin{center}
\begin{tikzpicture}[scale= .8]

\node (G) at (-2,1.5)  {$\GR{E}{7}$:};
\node[circle,draw] (1) at (0,2) {\tiny 1};
\node[circle,draw] (2) at (2,2) {\tiny 2};
\node[circle,draw] (3) at (4,2) {\tiny 3};
\node[circle,draw] (4) at (6,2) {\tiny 4};
\node[circle,draw] (5) at (8,2) {\tiny 5};
\node[circle,draw] (6) at (10,2) {\tiny 6};
\node[circle,draw] (7) at (6,0) {\tiny 7};
\foreach \from/\to in {1/2, 2/3, 3/4, 4/5, 5/6, 4/7} \draw [-,line width=.7pt] (\from) -- (\to);
\draw[darkblue,line width=.7pt] (2.95,2.1) arc(0:180:.95);
\draw[darkblue,line width=.7pt] (4.95,2.1) arc(0:180:.95);
\draw[darkblue,line width=.7pt] (6.95,2.1) arc(0:180:.95);
\draw[darkblue,line width=.7pt] (8.95,2.1) arc(0:180:.95);
\draw[darkblue,line width=.7pt] (6.04,.95) arc(-90:0:.95);
\draw[darkblue,line width=.7pt] (5.04,1.9) arc(180:268:.95);
\draw (2,3.3) node {{\color{red}\footnotesize $\ap_7$}};
\draw (4,3.3) node {{\color{red}\footnotesize $\ap_4$}};
\draw (6,3.3) node {{\color{red}\footnotesize $\ap_3$}};
\draw (8,3.3) node {{\color{red}\footnotesize $\ap_2$}};
\draw (5,1.1) node {{\color{red}\footnotesize $\ap_5$}};
\draw (7,1.1) node {{\color{red}\footnotesize $\ap_1$}};

\end{tikzpicture}
\end{center}

\vspace{.4cm}

\begin{center}
\begin{tikzpicture}[scale= .8]

\node (G) at (-4,1.5)  {$\GR{E}{8}$:};
\node[circle,draw] (1) at (-2,2) {\tiny 1};
\node[circle,draw] (2) at (0,2) {\tiny 2};
\node[circle,draw] (3) at (2,2) {\tiny 3};
\node[circle,draw] (4) at (4,2) {\tiny 4};
\node[circle,draw] (5) at (6,2) {\tiny 5};
\node[circle,draw] (6) at (8,2) {\tiny 6};
\node[circle,draw] (7) at (10,2) {\tiny 7};
\node[circle,draw] (8) at (6,0) {\tiny 8};
\foreach \from/\to in {1/2, 2/3, 3/4, 4/5, 5/6, 6/7, 5/8} \draw [-,line width=.7pt] (\from) -- (\to);
\draw[darkblue,line width=.7pt] (0.95,2.1) arc(0:180:.95);
\draw[darkblue,line width=.7pt] (2.95,2.1) arc(0:180:.95);
\draw[darkblue,line width=.7pt] (4.95,2.1) arc(0:180:.95);
\draw[darkblue,line width=.7pt] (6.95,2.1) arc(0:180:.95);
\draw[darkblue,line width=.7pt] (8.95,2.1) arc(0:180:.95);
\draw[darkblue,line width=.7pt] (6.04,.95) arc(-90:0:.95);
\draw[darkblue,line width=.7pt] (5.04,1.9) arc(180:268:.95);
\draw (0,3.3) node {{\color{red}\footnotesize $\ap_7$}};
\draw (2,3.3) node {{\color{red}\footnotesize $\ap_6$}};
\draw (4,3.3) node {{\color{red}\footnotesize $\ap_5$}};
\draw (6,3.3) node {{\color{red}\footnotesize $\ap_4$}};
\draw (8,3.3) node {{\color{red}\footnotesize $\ap_3$}};
\draw (5,1.1) node {{\color{red}\footnotesize $\ap_8$}};
\draw (7,1.1) node {{\color{red}\footnotesize $\ap_2$}};
\end{tikzpicture}
\end{center}

\vspace{.5cm}

\begin{center}
\begin{tikzpicture}[scale= .8]

\node (G) at (-4,1.5)  {$\GR{D}{8}$:};
\node[circle,draw] (1) at (-2,2) {\tiny 1};
\node[circle,draw] (2) at (0,2) {\tiny 2};
\node[circle,draw] (3) at (2,2) {\tiny 3};
\node[circle,draw] (4) at (4,2) {\tiny 4};
\node[circle,draw] (5) at (6,2) {\tiny 5};
\node[circle,draw] (6) at (8,2) {\tiny 6};
\node[circle,draw] (7) at (10,2) {\tiny 7};
\node[circle,draw] (8) at (8,0) {\tiny 8};
\foreach \from/\to in {1/2, 2/3, 3/4, 4/5, 5/6, 6/7, 6/8} \draw [-,line width=.7pt] (\from) -- (\to);
\draw[darkblue,line width=.7pt] (0.95,2.1) arc(0:180:.95);
\draw[darkblue,line width=.7pt] (2.95,2.1) arc(0:180:.95);
\draw[darkblue,line width=.7pt] (4.95,2.1) arc(0:180:.95);
\draw[darkblue,line width=.7pt] (6.95,2.1) arc(0:180:.95);
\draw[darkblue,line width=.7pt] (8.95,2.1) arc(0:180:.97);
\draw[darkblue,line width=.7pt] (8.04,.95) arc(-90:0:.95);
\draw[darkblue,line width=.7pt] (7.04,1.9) arc(180:268:.95);
\draw (0,3.3) node {{\color{red}\footnotesize $\ap_3$}};
\draw (2,3.3) node {{\color{red}\footnotesize $\ap_4$}};
\draw (4,3.3) node {{\color{red}\footnotesize $\ap_5$}};
\draw (6,3.3) node {{\color{red}\footnotesize $\ap_6$}};
\draw (8,3.3) node {{\color{red}\footnotesize $\ap_8$}};
\draw (7,1.1) node {{\color{red}\footnotesize $\ap_7$}};
\draw (9,1.1) node {{\color{red}\footnotesize $\ap_1$}};
\end{tikzpicture}
\end{center}

\end{document}